\newtheorem{thm}{Theorem}
\theoremstyle{plain}
\newtheorem*{acknowledgement}{Acknowledgement}
\newtheorem{cor}{Corollary}
\newtheorem{defn}{Definition}
\newtheorem{lem}{Lemma}
\newtheorem{prop}{Proposition}
\newtheorem{rem}{Remark}
\numberwithin{equation}{section}
\renewcommand{\phi}{\varphi}
\newcommand{\BC}{{\mathbb{C}}}
\newcommand{\BD}{{\mathbb{D}}}
\newcommand{\BF}{{\mathbb{F}}}
\newcommand{\BH}{{\mathbb{H}}}
\newcommand{\BP}{{\mathbb{P}}}
\newcommand{\BQ}{{\mathbb{Q}}}
\newcommand{\BR}{{\mathbb{R}}}
\newcommand{\BZ}{{\mathbb{Z}}}
\newcommand{\CJ}{{\mathcal J}}
\newcommand{\OQ}{{\overline{\BQ}}}
\newcommand{\nin}{\notin}
\newcommand{\ord}{\mathop{\rm ord}\nolimits}
\renewcommand{\mod}{\mathop{\rm mod}\nolimits}
\newcommand{\nequiv}{\not \equiv}
\newcommand{\im}{\mathop{{\rm Im}}\nolimits}
\newcommand{\artanh}{\mathop{\rm artanh}\nolimits}
\renewcommand{\Re}{\mathop{\rm Re}\nolimits}
\renewcommand{\Im}{\mathop{\rm Im}\nolimits}
\newcommand{\SL}[1]{\mathop{\rm SL}_{#1} \nolimits}
\newcommand{\quotient}[2]{
        \mathchoice
            {% \displaystyle
                \text{\raise1ex\hbox{$#1$}\Big/\lower1ex\hbox{$#2$}}%
            }
            {% \textstyle
                #1\,/\,#2
            }
            {% \scriptstyle
                #1\,/\,#2
            }
            {% \scriptscriptstyle  
                #1\,/\,#2
            }
    }
\newcommand{\rquotient}[2]{
        \mathchoice
            {% \displaystyle
                \text{\lower1ex\hbox{$#1$}\Big \backslash \raise01ex\hbox{$#2$}}%
            }
            {% \textstyle
                #1\,\backslash\,#2
            }
            {% \scriptstyle
                #1\,\backslash\,#2
            }
            {% \scriptscriptstyle  
                #1\,\backslash\,#2
            }
    }
\newcommand{\lrquotient}[3]{
        \mathchoice
            {% \displaystyle
                \text{\lower1ex\hbox{$#1$}\Big \backslash \raise01ex\hbox{$#2$}\Big/\lower1ex\hbox{$#3$}}%
            }
            {% \textstyle
                #1\,\backslash\,#2\,/\,#3
            }
            {% \scriptstyle
                #1\,\backslash\,#2\,/\,#3
            }
            {% \scriptscriptstyle  
                #1\,\backslash\,#2\,/\,#3
            }
    }
\begin{document}
\selectlanguage{english}

\bibliographystyle{alpha}

\title{Uniform bounds on sup-norms of holomorphic forms of real weight}
\author{Raphael S. Steiner}
\address{Department of Mathematics, University of Bristol, Bristol BS8 1TW, UK}%
\email{raphael.steiner@bristol.ac.uk}%

%\thanks{Thanks for Author One.}

%\date{Mai 20, 2014}
%\subjclass{Primary 05C38, 15A15; Secondary 05A15, 15A18} %
%\keywords{Keyword one, keyword two etc.}%

%\dedicatory{Dedicated to Professor XY on the occasion of his seventieth birthday.}

\begin{abstract} We establish uniform bounds for the sup-norms of modular forms of arbitrary real weight $k$ with respect to a finite index subgroup $\Gamma$ of $\SL2(\BZ)$. We also prove corresponding bounds for the supremum over a compact set. We achieve this by extending to a sum over an orthonormal basis $\sum_j y^k|f_j(z)|^2$ and analysing this sum by means of a Bergman kernel and the Fourier coefficients of Poincar\'e series. As such our results are valid without any assumption that the forms are Hecke eigenfunctions. Under some weak assumptions we further prove the right order of magnitude of $\sup_{z \in \BH} \sum_j y^k|f_j(z)|^2 $.
\end{abstract}
\maketitle

\section{Introduction}
Supremum norms of Maass and holomorphic cusp forms (of integral and half-integral weight) have been studied in various ways. Iwaniec-Sarnak \cite{IS95} obtained the first non-trivial result in the eigenvalue aspect. Since then, results have been obtained in the level and weight aspects by various authors. In particular, we refer to the following papers for the current best-known bounds for Hecke eigenforms: Xia \cite{Supnormintweight} for the weight aspect, Das–Sengupta \cite{DasSeng} for the weight aspect for forms on compact quotients, Harcos–Templier for the level aspect \cite{HT2},\cite{HT3}, Saha \cite{Saha14} for the level aspect in the non-squarefree case, Templier \cite{Thybrid} for a hybrid bound, and Kiral \cite{halflevel} for the level aspect in the case of half-integral weight. For non Hecke eigenforms, results in the weight aspect have been obtained by Rudnick \cite{R05} and Friedman-Jorgenson-Kramer \cite{FJK}. Recently, there has also been an explosion of papers related to the sup-norm question for automorphic forms of higher rank.

%They have been further studied in the level aspect by Blomer-Holowinsky \cite{BH10}, Templier \cite{HT1} and Harcos-Templier \cite{HT2},\cite{HT3} to name a few. Templier was able to unify both best results in a hybrid bound \cite{Thybrid}.\\
%In the case of holomorphic forms, results in the weight aspect have been obtained by Xia \cite{Supnormintweight}, Rudnick \cite{R05}, Blomer-Khan-Young \cite{N4} and Friedman-Jorgenson-Kramer \cite{FJK}. In the level aspect non-trivial results have been given by Blomer-Holowinsky \cite{BH10} and in the case of half integral weight also by Kiral \cite{halflevel}.\\
%The question about supremum norms are related to subconvexity of $L$-functions, the theory of quantum chaos and the mass equidistribution conjecture, which makes it an interesting topic to study.\\

In this paper we study the supremum norm problem for holomorphic cusp forms in a variety of cases, where no non-trivial bounds have been previously written down. In particular we generalize the results obtained in \cite{FJK} in absolute uniformity to modular forms \emph{of arbitrary real weight} with respect to a finite index subgroup of $\SL2(\BZ)$. We note that the results of this paper do not assume that the cusp forms are eigenfunctions of any Hecke operators.\\

To motivate some of our results we recall a result of Rudnick \cite{R05}, who proved that for a fixed compact subset $K$ of the upper-half plane $\BH$ and a cusp form $f$ of weight $k \in 2 \BZ$ for the full modular group $\SL2(\BZ)$ we have $\sup_{z \in K} y^{\frac{k}{2}}|f(z)| \ll_K k^{\frac{1}{2}} \| f\|_2$. This result is essentially the best possible as there is a family of modular forms, which admit their supremum in a compact set and satisfy $\sup_{z \in K} y^{\frac{k}{2}}|f(z)| \gg k^{\frac{1}{2}- \epsilon} \| f\|_2$. We generalize this result of Rudnick uniformly to arbitrary real weight $k$, finite index subgroup $\Gamma$ and automorphy factor $\nu$ as follows:

\begin{thm} \label{thm:1} Let $\nu$ be an automorphy factor of weight $k\ge 6$ for a finite index subgroup $\Gamma \ni -I$ of $\SL2(\BZ)$ and $f\in S(\Gamma,k,\nu)$ with Petersson norm\footnote{The Petersson norm has been defined in this paper so as to be independent of $\Gamma$.} $\langle f,f \rangle_{\Gamma}=1$. Then we have for any compact subset $K \subseteq \BH$:
$$
\sup_{z \in K}y^{\frac{k}{2}}|f(z)| \ll_{K} [\SL2(\BZ):\Gamma]^{\frac{1}{2}} k^{\frac{1}{2}}.
$$
\end{thm}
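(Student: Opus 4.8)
The plan is to deduce the bound from a pointwise estimate for the diagonal of the Bergman kernel of $S(\Gamma,k,\nu)$, realised as a Poincar\'e series over $\Gamma$ whose geometric terms decay exponentially in $k$ away from the identity. Concretely, fix an orthonormal basis $f_1,\dots,f_d$ of $S(\Gamma,k,\nu)$ with respect to $\langle\,\cdot\,,\cdot\,\rangle_{\Gamma}$ and set $B_\Gamma(z):=\sum_{j=1}^{d}y^{k}|f_j(z)|^{2}$. Writing $f=\sum_j c_jf_j$ with $\sum_j|c_j|^{2}=1$, Cauchy--Schwarz gives $y^{k/2}|f(z)|\le B_\Gamma(z)^{1/2}$; moreover $B_\Gamma$ is independent of the chosen basis and $\Gamma$-invariant. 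Hence it suffices to prove
\[
\sup_{z\in K}B_\Gamma(z)\ \ll_{K}\ k\,[\SL2(\BZ):\Gamma].
\]

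Next I would write $B_\Gamma(z)=y^{k}K_\Gamma(z,z)$, where $K_\Gamma(z,w)=\sum_j f_j(z)\overline{f_j(w)}$ is the reproducing kernel of $\bigl(S(\Gamma,k,\nu),\langle\,\cdot\,,\cdot\,\rangle_\Gamma\bigr)$, and obtain $K_\Gamma$ by unfolding the free-space Bergman kernel $\kappa(z,w)=\tfrac{k-1}{4\pi}\bigl(\tfrac{2i}{z-\bar w}\bigr)^{k}$ of the space of holomorphic functions on $\BH$ that are square-integrable against $y^{k-2}\,dx\,dy$. Since the Petersson norm is normalised so as to be independent of $\Gamma$, the unfolding produces a Poincar\'e-type series over $\overline{\Gamma}=\Gamma/\{\pm I\}$ whose diagonal takes the form
\[
B_\Gamma(z)=\frac{(k-1)[\SL2(\BZ):\Gamma]}{4\pi}\sum_{\gamma\in\overline{\Gamma}}\eta_\gamma(z),\qquad |\eta_\gamma(z)|=\left(\frac{4\,y\,\Im(\gamma z)}{|\gamma z-\bar z|^{2}}\right)^{\!k/2},
\]
with $\eta_I\equiv1$; the moduli are computed from $|j(\gamma,z)|^{2}=y/\Im(\gamma z)$ and $|\nu|\equiv1$. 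The $\gamma=I$ term equals $\tfrac{k-1}{4\pi}[\SL2(\BZ):\Gamma]\asymp k\,[\SL2(\BZ):\Gamma]$, precisely the size we want; everything else must be shown negligible.

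To bound the tail, note that $|\gamma z-\bar z|^{2}=|\gamma z-z|^{2}+4\,y\,\Im(\gamma z)$, whence $|\eta_\gamma(z)|=\bigl(1+\tfrac14 u(\gamma z,z)\bigr)^{-k/2}$ with $u(z,w):=|z-w|^{2}/(\Im z\,\Im w)$; this is $\le1$, with equality iff $\gamma z=z$. Enlarging $\overline{\Gamma}$ to $\overline{\SL2(\BZ)}$,
\[
B_\Gamma(z)\ \le\ \frac{(k-1)[\SL2(\BZ):\Gamma]}{4\pi}\sum_{\gamma\in\overline{\SL2(\BZ)}}\Bigl(1+\tfrac14 u(\gamma z,z)\Bigr)^{-k/2}.
\]
For $z$ in the compact set $K$ the standard hyperbolic lattice-point count gives $\#\{\gamma\in\overline{\SL2(\BZ)}:u(\gamma z,z)\le R\}\ll_{K}1+R$, uniformly in $z\in K$ and $R\ge0$. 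The $\ll_{K}1$ terms with $u(\gamma z,z)\le1$ each contribute $\le1$, while partial summation against the counting bound gives $\sum_{u(\gamma z,z)>1}\bigl(1+\tfrac14 u(\gamma z,z)\bigr)^{-k/2}\ll_{K}k\int_{1}^{\infty}(1+\tfrac R4)^{-k/2}\,dR\ll(\tfrac54)^{-k/2}$ for $k\ge6$. Hence the series is $\ll_{K}1$, so $\sup_{z\in K}B_\Gamma(z)\ll_{K}k\,[\SL2(\BZ):\Gamma]$, and the theorem follows on taking square roots.

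I expect the main obstacle to be the clean derivation of the Poincar\'e-series identity for $K_\Gamma$ with the correct normalising constant and index factor: one must verify that the twisted action $\gamma\mapsto\overline{\nu(\gamma)}j(\gamma,z)^{-k}$ is a (one-)cocycle for the real-weight automorphy factor $\nu$ so that the term-by-term unfolding against a fundamental domain for $\Gamma$ is legitimate, and keep track of the contribution of $-I\in\Gamma$; absolute convergence of the series is harmless, holding already for $k>4$, so the constraint $k\ge6$ is comfortably met. A minor point is that the dependence on $K$ enters only through the implied constant in the orbit count and through the elliptic points $i,\rho$, where finitely many $u(\gamma z,z)$ degenerate to $0$; this is harmless because the relevant stabilisers have order $\ll1$, so only $O_{K}(1)$ additional summands of size $\ll\tfrac{k-1}{4\pi}[\SL2(\BZ):\Gamma]$ occur.
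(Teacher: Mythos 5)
Your proposal is correct and follows essentially the same route as the paper: you embed $f$ into an orthonormal basis, pass to the diagonal of the Bergman kernel (the paper's Theorem \ref{thm:bergman} and Corollary \ref{cor:Bergmanavg} give exactly your identity, with matching normalisation once the $\pm I$ bookkeeping is done), and then bound the resulting sum of absolute values over all of $\SL2(\BZ)$. The only difference is the last step: you estimate $\sum_{\gamma}\bigl(1+\tfrac14 u(\gamma z,z)\bigr)^{-k/2}$ via the standard hyperbolic lattice-point count on a compact set, whereas the paper performs an explicit summation over the matrix entries (obtaining the bound $\ll y\,(1+\tfrac{1}{k-2})$, uniform on the fundamental domain, which it reuses for Theorem \ref{thm:2}); on a compact $K$ both give $\ll_K 1$ and hence the stated bound.
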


The proof will essentially rely on the construction of a Bergman kernel for modular forms of real weight. We refer the reader to Theorem \ref{thm:bergman} for the key properties of the Bergman kernel.\\

If we do not restrict ourselves to compact sets the situation is different. In this case we are able to show:

\begin{thm} \label{thm:2} Let $\nu$ be an automorphy factor of weight $k\gg 1$ with respect to a finite index subgroup $\Gamma \ni -I$ of $\SL2(\BZ)$. Further let $f \in S(\Gamma,k,\nu)$ with $\langle f,f\rangle_{\Gamma}=1$ be a normalized cusp form. Then we have:
$$\begin{aligned}
\sup_{z \in \BH} y^{\frac{k}{2}}|f(z)| & \ll_{\epsilon} \left(1+ \max_{\tau \in \SL2(\BZ)}n_{\tau}^{\frac{1}{2}}k^{-\frac{1}{2}+\epsilon} \right) \frac{\displaystyle [\SL2(\BZ):\Gamma]^{\frac{1}{2}}k^{\frac{3}{4}}}{\displaystyle \min_{\tau \in \SL2(\BZ)}\eta_{\tau}^{\frac{1}{2}}}\\
& \ll_{\epsilon} \left(1+ [\SL2(\BZ):\Gamma]^{\frac{1}{2}}k^{-\frac{1}{2}+\epsilon} \right) \frac{\displaystyle [\SL2(\BZ):\Gamma]^{\frac{1}{2}}k^{\frac{3}{4}}}{\displaystyle \min_{\tau \in \SL2(\BZ)}\eta_{\tau}^{\frac{1}{2}}}.
\end{aligned}$$
\end{thm}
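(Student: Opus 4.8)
The plan is to pass to the diagonal of a Bergman kernel and then, cusp by cusp, to the Fourier coefficients of Poincar\'e series. Fix an orthonormal basis $(f_j)$ of $S(\Gamma,k,\nu)$ and set $B_\Gamma(z):=\sum_j y^k|f_j(z)|^2$; this is $\Gamma$-invariant, independent of the basis, and since $\langle f,f\rangle_\Gamma=1$ the Cauchy--Schwarz inequality gives $y^{k/2}|f(z)|\le B_\Gamma(z)^{1/2}$ for every $z\in\BH$, so it suffices to bound $\sup_{z\in\BH}B_\Gamma(z)$. Covering a fundamental domain of $\Gamma$ by translates $\tau\cdot(\text{standard domain of }\SL2(\BZ))$ with $\tau$ running over coset representatives of $\Gamma\backslash\SL2(\BZ)$, and using the identity $\Im(\tau w)^k|f(\tau w)|^2=\Im(w)^k|(f|_k\tau)(w)|^2$, one reduces to bounding, for each such $\tau$, the quantity $\Im(w)^k\sum_j|g_{j,\tau}(w)|^2$ over the standard domain, where $g_{j,\tau}:=f_j|_k\tau$ is an orthonormal basis for the corresponding space of $\tau^{-1}\Gamma\tau$ --- a group whose cusp at $\infty$ has width $n_\tau$ and whose lowest Fourier frequency there is $\eta_\tau/n_\tau$ in the notation of the theorem.

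Fix a cut-off height $T$. On the compact part $\Im w\le T$, Theorem~\ref{thm:bergman} bounds the diagonal Bergman kernel by $\ll[\SL2(\BZ):\Gamma]\,k$, which after taking square roots is comfortably within the claimed bound. The heart of the matter is the cuspidal part $\Im w>T$. Expanding $g_{j,\tau}(w)=\sum_m c_m(g_{j,\tau})\,e^{2\pi i\lambda_m w}$ with $\lambda_m>0$ the frequencies at $\infty$ (spacing $1/n_\tau$, least value $\eta_\tau/n_\tau$), the $\ell^2$-triangle inequality over the basis index gives $(\sum_j|g_{j,\tau}(w)|^2)^{1/2}\le\sum_m e^{-2\pi\lambda_m\Im w}(\sum_j|c_m(g_{j,\tau})|^2)^{1/2}$. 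The inner sum I evaluate by the reproducing property of the $m$-th Poincar\'e series $P_m$ for $\tau^{-1}\Gamma\tau$, namely $\langle h,P_m\rangle=\alpha_m c_m(h)$ with $\alpha_m$ an explicit constant proportional to $[\SL2(\BZ):\Gamma]^{-1}n_\tau\Gamma(k-1)(4\pi\lambda_m)^{1-k}$ (the index enters through the $\Gamma$-independent normalisation of the Petersson product), so that Parseval yields $\sum_j|c_m(g_{j,\tau})|^2=c_m(P_m)/\alpha_m$ and $c_m(P_m)=1+E_m$, with $E_m$ the Kloosterman--Bessel sum $\sum_c c^{-1}S_\nu(m,m;c)J_{k-1}(4\pi\lambda_m/c)$ over the admissible moduli at the cusp.

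Dropping $E_m$, the main term is $\frac{[\SL2(\BZ):\Gamma]^{1/2}}{n_\tau^{1/2}\Gamma(k-1)^{1/2}}\sum_m\Im(w)^{k/2}e^{-2\pi\lambda_m\Im w}(4\pi\lambda_m)^{(k-1)/2}$. As a function of $\lambda$, the summand peaks at $\lambda^\ast=(k-1)/(4\pi\Im w)$ with value $\asymp\Im(w)^{1/2}(k-1)^{(k-1)/2}e^{-(k-1)/2}\asymp\Im(w)^{1/2}\Gamma(k-1)^{1/2}k^{1/4}$ and Gaussian width $\asymp k^{1/2}/\Im w$ in $\lambda$; comparing the sum (spacing $1/n_\tau$) with its integral and maximising over $\Im w>T$ --- the extremal height being $\Im w\asymp(k-1)n_\tau/(4\pi\eta_\tau)$, the largest for which $\lambda^\ast$ still reaches the least frequency $\eta_\tau/n_\tau$ --- gives, after Stirling, $\asymp[\SL2(\BZ):\Gamma]^{1/2}k^{3/4}\eta_\tau^{-1/2}$. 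One checks that $T$ can be chosen (roughly $\max(1,n_\tau\eta_\tau/k)$, still inside the range where Theorem~\ref{thm:bergman} applies) so that the extremal height lies above $T$ and the $\Im w=T$ endpoint is absorbed by the Bergman bound. Finally $E_m$ is controlled by Weil's bound for the generalised (multiplier) Kloosterman sums together with the size of $J_{k-1}$ --- exponentially small when its argument is below $k-1$, and $O(k^{-1/3})$ in the Airy transition range --- which, summed against the admissible moduli, yields $E_m\ll n_\tau k^{-1+\epsilon}$ uniformly in the relevant $m$; feeding this through $(1+E_m)^{1/2}$ produces the relative factor $1+n_\tau^{1/2}k^{-1/2+\epsilon}$, and then $n_\tau\le[\SL2(\BZ):\Gamma]$ gives the second displayed form. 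Combining the cuspidal bound over the finitely many cusps with the (dominated, since $\eta_\tau\le1$) Bergman contribution completes the proof.

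The principal obstacle is exactly the uniform estimation of $E_m$ (equivalently, the off-diagonal Poincar\'e contributions): one must control the generalised Kloosterman sums and the $J$-Bessel factors simultaneously over all cusp widths, multiplier phases and admissible moduli, and dovetail this with the choice of the cut-off $T$ so that the Bergman-kernel regime and the Fourier regime overlap without losing either the power of $n_\tau$ or the factor $\eta_\tau^{-1/2}$. A subsidiary point is to confirm that the Bergman-kernel estimate of Theorem~\ref{thm:bergman} is available on the range $\Im w\le T$ needed here.
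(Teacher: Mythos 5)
Your architecture is the same as the paper's (embed $f$ in an orthonormal basis, reduce to the standard fundamental domain over coset representatives, use the Bergman-kernel diagonal at low height and the Fourier expansion together with Corollary \ref{cor:Fouriersqr} at large height), but as written there is a genuine gap in how the two regimes are glued. Your crossover height $T\approx\max(1,n_\tau\eta_\tau/k)$ is far too low. For $T<\Im w\ll n_\tau$ the Fourier-side estimate you set up does \emph{not} maximize to $\asymp[\SL2(\BZ):\Gamma]^{1/2}k^{3/4}\eta_\tau^{-1/2}$: the number of frequencies inside the Gaussian window of width $\asymp k^{1/2}/\Im w$ is $\asymp n_\tau k^{1/2}/\Im w$, and the resulting bound is $\asymp[\SL2(\BZ):\Gamma]^{1/2}\bigl(n_\tau^{1/2}k^{3/4}(\Im w)^{-1/2}+n_\tau^{-1/2}k^{1/4}(\Im w)^{1/2}\bigr)$ (this is exactly the square root of the paper's Proposition \ref{prop:method1low}). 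At $\Im w\asymp 1$ with $n_\tau$ large and $\eta_\tau\asymp1$ the first term is $\asymp[\SL2(\BZ):\Gamma]^{1/2}n_\tau^{1/2}k^{3/4}$, which exceeds the claimed bound; it is not "absorbed by the Bergman bound", since that bound is only being invoked below $T$. The repair is exactly the paper's choice: take the crossover at $\Im w\asymp n_\tau$ and use the Bergman/counting estimate (Proposition \ref{prop:method2}) up to that height. Note also that Theorem \ref{thm:bergman} alone does not give a sup bound; the counting argument yields $\ll[\SL2(\BZ):\Gamma]\,k\,(1+y\,k^{-1/2+\epsilon})$, not $\ll[\SL2(\BZ):\Gamma]\,k$, once $y\gtrsim k^{1/2}$, and it is precisely this evaluation at $y\asymp\max_\tau n_\tau$ that produces the secondary factor $1+\max_\tau n_\tau^{1/2}k^{-1/2+\epsilon}$ in the theorem — not the Kloosterman--Bessel error, as you assert.

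The second problem is your treatment of $E_m$. For an arbitrary finite-index subgroup and an arbitrary real-weight multiplier system there is no Weil bound for the generalized Kloosterman sums, and your claimed uniform estimate $E_m\ll n_\tau k^{-1+\epsilon}$ does not follow from the available tools (nor from the trivial bound $|W|\le n_\tau^2c$, which for $m+\kappa_\tau\asymp k$ only gives something like $E_m\ll k^{-7/15}$). Fortunately this is an overclaim rather than a necessity: since $E_m$ enters only through the factor $1+E_m$, all one needs is $E_m\ll1$ (indeed $o(1)$), and this the paper obtains from the trivial Kloosterman bound combined with the uniform $J$-Bessel estimates split over the ranges below, near, and above the transition point $k-1$ (your statement that $J_{k-1}$ is "exponentially small below $k-1$" and "$O(k^{-1/3})$ in the transition range" is also not accurate uniformly; the correct uniform bounds are those of Propositions \ref{prop:JBesselverysmall}--\ref{prop:JBessellarge}). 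With the crossover moved to $\Im w\asymp n_\tau$ and the Kloosterman error handled trivially, your argument becomes essentially the paper's proof, with your Minkowski step playing the role of the paper's weighted Cauchy--Schwarz.
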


Here $n_{\tau}$ defines the width of the cusp $\tau \infty$. The quantity $\eta_{\tau}$ equals the cusp parameter\footnote{In the classical setting $\Gamma$ a congruence subgroup and automorphy factor $j^{k}$, where $k$ is an even integer, all cusp parameters are $0$.} of $\tau \infty$ if the cusp parameter is not equal to $0$, and equals $1$ if the cusp parameter equals $0$. We note that in the special case $\Gamma=\Gamma_0(N)$ the width $n_{\tau}$ is an integer dividing $N$.\\

Both Theorems \ref{thm:1} and \ref{thm:2} may be regarded as convexity results, since they are obtained by embedding f into an orthonormal basis and then proving the corresponding bounds for the sum over the whole basis in different regions. This generalizes the work of Friedman-Jorgenson-Kramer \cite{FJK} to real weight. One should remark here that our approach differs from theirs as we use properties of the Poincar\'e series and their work depends on the careful analysis of a heat kernel and what happens at the cusps.\\

If we assume $[\SL2(\BZ):\Gamma] \ll k^{1-\delta}$, we are able to show that our analysis of $\sum_j y^k |f_j(z)|^2$ gives the correct order of magnitude in the following sense:

\begin{thm} \label{thm:3}
Let $\nu$ be an automorphy factor of weight $k \gg 1$ for $\Gamma \ni -I$ a finite index subgroup of $\SL2(\BZ)$. Then we have for $[\SL2(\BZ):\Gamma] \ll k^{1-\delta} $ and $\{f_j\}$ an orthonormal basis of $S(\Gamma,k,\nu)$:
$$
\sup_{z \in \BH} \sum_{j} y^k |f_j(z)|^2 \asymp \frac{[\SL2(\BZ):\Gamma] k^{\frac{3}{2}}}{\displaystyle \min_{\tau \in \SL2(\BZ)}\eta_{\tau}},
$$
where the implied constant depends at most on $\delta$ and the implied constant in $[\SL2(\BZ):\Gamma] \ll k^{1-\eta}$.
\end{thm}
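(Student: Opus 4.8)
The plan is to prove the two bounds separately. For the upper bound $\sup_{z\in\BH}\sum_j y^k|f_j(z)|^2 \ll \frac{[\SL2(\BZ):\Gamma]k^{3/2}}{\min_\tau \eta_\tau}$, I would simply invoke Theorem \ref{thm:2}: squaring the bound there gives $\sup_{z\in\BH} y^k|f(z)|^2 \ll \left(1+[\SL2(\BZ):\Gamma]k^{-1+\epsilon}\right)\frac{[\SL2(\BZ):\Gamma]k^{3/2}}{\min_\tau \eta_\tau}$ for a single normalized form, and summing over an orthonormal basis one gets the same shape with an extra factor $\dim S(\Gamma,k,\nu) \ll [\SL2(\BZ):\Gamma]k$ — wait, that is too lossy, so instead one must apply the Theorem \ref{thm:2} machinery directly to the basis sum. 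In fact the proof of Theorem \ref{thm:2} should already go through $\sum_j y^k|f_j(z)|^2$ via the Bergman kernel on the diagonal, so the upper bound for the basis sum is obtained there with the \emph{same} right-hand side, and under the hypothesis $[\SL2(\BZ):\Gamma]\ll k^{1-\delta}$ the correction factor $1+[\SL2(\BZ):\Gamma]^{1/2}k^{-1/2+\epsilon}$ is $1+O(k^{-\delta/2+\epsilon})=O(1)$, eliminating it. So the upper bound is essentially a corollary of the work already done, once $\epsilon$ is chosen smaller than $\delta/2$.

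The real content is the matching \emph{lower} bound. The strategy is to exhibit a single point $z_0\in\BH$, near the cusp realizing $\min_\tau \eta_\tau$, at which $\sum_j y^k|f_j(z_0)|^2$ is already $\gg \frac{[\SL2(\BZ):\Gamma]k^{3/2}}{\min_\tau\eta_\tau}$. Since $\sum_j y^k|f_j(z)|^2$ is, up to normalization, the Bergman kernel $B(z,z)$ of $S(\Gamma,k,\nu)$ evaluated on the diagonal (this is the object studied in Theorem \ref{thm:bergman}), it suffices to get a lower bound for $B(z_0,z_0)$. I would pick $\tau_0\in\SL2(\BZ)$ achieving the minimum, move to the cusp $\tau_0\infty$ via the scaling matrix, and choose $z_0$ with imaginary part of order $k/\eta_{\tau_0}$ (or $k$, appropriately scaled by the width $n_{\tau_0}$), i.e.\ high up in the cuspidal region, where a single Fourier mode of a Poincaré series dominates. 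Concretely one writes $B(z_0,z_0)$ in terms of the Fourier expansion of the Poincaré series $P_m$ at that cusp, or equivalently pairs $B(\cdot,z_0)$ against a well-chosen Poincaré series and uses the explicit Petersson formula / Fourier coefficients of Poincaré series established earlier in the paper; the point $z_0$ is chosen so that the "diagonal"/identity term in the Petersson formula dominates the Kloosterman-sum corrections.

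The key estimates are: (i) the main term contributes $\asymp \frac{[\SL2(\BZ):\Gamma]k^{3/2}}{\eta_{\tau_0}}$ — this comes from the normalization of the Poincaré series combined with the $\Gamma(k-1)$-type factors and a Stirling/Laplace analysis of $\max_{y>0} y^{k} e^{-4\pi m y/n_{\tau_0}}$, whose maximum is of size $(k/(4\pi e m/n_{\tau_0}))^{k}$, and pairing this against the leading Fourier coefficient of $P_m$; (ii) the off-diagonal (Kloosterman) terms are $o$ of the main term, which is where the hypothesis $[\SL2(\BZ):\Gamma]\ll k^{1-\delta}$ is used — the geometric side over non-trivial cosets is suppressed by a factor like $[\SL2(\BZ):\Gamma]^{1/2}k^{-1/2+\epsilon}=O(k^{-\delta/2+\epsilon})$, exactly the correction term appearing in Theorem \ref{thm:2}. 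The main obstacle is controlling these off-diagonal contributions uniformly in the real weight $k$, the automorphy factor $\nu$, and the cusp parameter: one must ensure that the Bessel functions $J_{k-1}$ (of real order) appearing in the Fourier coefficients of Poincaré series are in the exponentially decaying range at the chosen $z_0$, and that the sum over moduli $c$ of Kloosterman sums, weighted by these Bessel values, is genuinely smaller than the main term rather than merely comparable. A secondary technical point is bookkeeping the cusp parameter $\eta_\tau$ correctly when it vanishes versus when it does not, so that the final bound reads $1/\min_\tau\eta_\tau$ with the convention $\eta_\tau=1$ in the integral-weight case; I would handle this by noting that the lower-bound construction is insensitive to which cusp is chosen beyond the value of $\eta_\tau n_\tau^{-1}$, and simply select the optimal one.
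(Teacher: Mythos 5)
Your outline is essentially the paper's own route. The upper bound is indeed obtained exactly as you say: the estimates behind Theorem \ref{thm:2} (Propositions \ref{prop:method2}, \ref{prop:method1low} and \ref{prop:method1large}) are already stated for the full basis sum $\sum_j y^k|(f_j|_k\tau)(z)|^2$, so no dimension factor is lost, and the hypothesis $[\SL2(\BZ):\Gamma]\ll k^{1-\delta}$ serves only to make the bracketed correction factors $O(1)$. The lower bound in the paper is likewise what you describe in spirit: pick the cusp and mode with $m+\kappa_\tau=\eta_\tau$ minimal, use Corollary \ref{cor:Fouriersqr} for $\sum_j|\widehat{(f_j|_k\tau)}(m)|^2$, take $y=kn_\tau/(4\pi(m+\kappa_\tau))$, and get a main term $\asymp \mu(\Gamma)k^{3/2}/\eta_\tau$ by Stirling.

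Three points in your sketch need correction, though none is fatal. First, the hypothesis $[\SL2(\BZ):\Gamma]\ll k^{1-\delta}$ is \emph{not} what controls the off-diagonal terms in the lower bound: at the chosen mode the Bessel argument is $4\pi(m+\kappa_\tau)/(cn_\tau)\le 8\pi$ while the order is $k-1$, so Proposition \ref{prop:JBesselverysmall} together with the trivial bound $|W|\le n_\tau^2 c$ makes the Kloosterman correction of size $\ll n_\tau\,\Gamma(k)^{-1}\bigl(2\pi(m+\kappa_\tau)/n_\tau\bigr)^{k-1}$, super-exponentially small for any finite-index $\Gamma$; your claimed suppression factor $\mu(\Gamma)^{1/2}k^{-1/2+\epsilon}$ is borrowed from the upper-bound correction and is not justified as stated (it is also far weaker than the truth), and the hypothesis is genuinely needed only on the upper-bound side. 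Second, the width cancels in the main term: maximizing $y^k e^{-4\pi(m+\kappa_\tau)y/n_\tau}\cdot\mu(\Gamma)(4\pi(m+\kappa_\tau))^{k-1}/(n_\tau^{k}\Gamma(k-1))$ over $y$ gives $\asymp\mu(\Gamma)k^{3/2}/(m+\kappa_\tau)$ with no $n_\tau$ left, so the cusp should be selected by $\eta_\tau$ alone, not by $\eta_\tau n_\tau^{-1}$ as you wrote. Third, fixing a single explicit point $z_0$ and pairing the Bergman kernel against $P_m$ requires a pointwise lower bound on $|P_m(z_0)|$, i.e.\ that the leading Fourier mode dominates the entire tail of the Poincar\'e series at $y\asymp k$; this can be done with the coefficient estimates \eqref{eq:region1}--\eqref{eq:region4} but is extra work your sketch does not address. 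The paper sidesteps it by bounding the $m$-th Fourier coefficient through the integral over a full period in $x$ and Cauchy--Schwarz, as in \eqref{eq:fourierlow}--\eqref{eq:lowavgbound}, so that only $\sup_{\Im z=y}\sum_j y^k|(f_j|_k\tau)(z)|^2$ over the horocycle is needed rather than the value at a prescribed point; you should either adopt that device or supply the Poincar\'e-series tail bound.
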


Finally, we remark that while the theory of Hecke operators of real weight has its difficulties, it has been well established in the case of half-integral weight. So, in analogy with the integral weight case, one expects that it should be possible to improve all the above bounds significantly in the case that $f$ is a Hecke eigenform of half-integral weight. This indeed turns out to be the case and will be topic of a forthcoming paper.
\newpage
\section{Notation and Preliminaries}
For $w \in \BC$ and $k \in \BR$ we let $w^k:=\exp(k \cdot \log(w))$, where $\log(w)=\log(|w|)+i \arg(w)$ with $-\pi < \arg(w) \le \pi$. The symbol $\ll$ denotes the Vinogradov symbol and $f(x) \ll_{A,B,C} g(x)$ means $|f(x)| \le K g(x)$, where $K$ depends at most on $A,B$ and $C$. Further the symbol $f(x) \asymp_{A} g(x)$ means that $f(x) \ll_{A} g(x)$ and $g(x) \ll_{A} f(x)$.\\

As usual the action of $\SL2(\BZ)$ on $\BH=\{z \in \BC | \Im z > 0\}$ is given by Möbius transformations:
$$
\gamma z=\gamma \cdot z = \frac{az+b}{cz+d}, \quad \forall \gamma= \begin{pmatrix} a & b \\ c & d \end{pmatrix} \in \SL2(\BZ), \forall z \in \BH.
$$
The action is then extended to the set of cusps $\OQ=\BP^1(\BQ)=\BQ \sqcup \{ \infty \}$. For $\gamma= \begin{pmatrix} a & b \\ c & d \end{pmatrix} \in \SL2(\BR)$ we define
$$
j(\gamma,z)=(cz+d), \forall z \in \BH.
$$
For a detailed treatment of modular forms of real weight we refer the reader to \cite{MFaF}. Here we recall the necessary facts. Throughout this paper we assume $-I \in \Gamma$ and $\Gamma$ is a finite index subgroup of $\SL2(\BZ)$. Also we denote $\hat{\Gamma}:=\quotient{\Gamma}{\{\pm I\}}$.
\begin{defn} A function $\nu: \Gamma \times \BH \to \BC$ is called an \emph{automorphy factor} of weight $k$ on $\Gamma$ if the following conditions are satisfied:
\begin{enumerate}
	\item $\forall \gamma \in \Gamma: \nu(\gamma,\cdot)$ is a holomorphic function on $\BH$,
	\item $\forall \gamma \in \Gamma, \forall z \in \BH: |\nu(\gamma,z)|=|j(\gamma,z)|^k$,
	\item $\forall \gamma,\tau \in \Gamma, \forall z \in \BH: \nu(\tau \gamma,z)=\nu(\tau,\gamma z)\nu(\gamma,z)$,
	\item $\forall \gamma \in \Gamma, z \in \BH: \nu(-\gamma,z)=\nu(\gamma,z)$.
\end{enumerate} \end{defn}

Corresponding to $\nu$ we can define a \emph{multiplier system} $\upsilon:\Gamma\to S^1$ of weight $k$ on $\Gamma$ as:
$$
\upsilon(\gamma)=\upsilon(\gamma,z) := \frac{\nu(\gamma,z)}{j(\gamma,z)^k}, \quad \forall \gamma \in \Gamma, \forall z \in \BH.
$$
We remark that the right hand side is indeed independent of $z$ as it is a bounded holomorphic function on $\BH$ and thus constant. It satisfies the relation:
$$
\upsilon(\tau\gamma)=\sigma(\tau,\gamma)\upsilon(\tau)\upsilon(\gamma), \quad \forall \tau,\gamma \in \Gamma,
$$
where
$$
\sigma(\tau,\gamma):=\frac{j(\tau,\gamma z)^kj(\gamma,z)^k}{j(\tau\gamma,z)^k}.
$$

If $\nu$ is an automorphy factor of weight $k$ on $\Gamma$ and $\tau \in \SL2(\BZ)$ we can define a conjugate automorphy factor $\nu^{\tau}$ of weight $k$ on $\Gamma^{\tau}:= \tau^{-1}\Gamma \tau$ in the following way:
$$
\nu^{\tau}(\tau^{-1}\gamma \tau,z)=\frac{\nu(\gamma,\tau z)j(\tau,z)^k}{j(\tau,\tau^{-1}\gamma\tau z)^k}, \quad \forall \gamma \in \Gamma, \forall z \in \BH.
$$

\begin{rem} If $\tau \in \Gamma$, then $\nu^{\tau}=\nu$, but $\Gamma^{\tau}=\Gamma$ does not necessarily imply $\nu^{\tau}=\nu$, if $\tau \nin \Gamma$. \end{rem}

\begin{defn} Let $\tau \in \SL2(\BZ)$. The \emph{width} $n_{\tau}$ of the cusp $\tau \infty$ is defined as the smallest natural number $n$ such that the stabilizer $\hat{\Gamma}_{\tau \infty}$ of $\tau \infty$ is generated by $\tau \begin{pmatrix}	1 & n \\ 0 & 1 \end{pmatrix}\tau^{-1}$. It is only dependent on the equivalence class of $\tau \infty$ modulo $\Gamma$.  \end{defn}

\begin{defn} Let $\tau \in \SL2(\BZ)$. The \emph{cusp parameter} $\kappa_{\tau}$ is defined as the real number in $[0,1)$, which satisfies:
$$
e^{2 \pi i \kappa_{\tau}}=\upsilon^{\tau}(U^{n_\tau})=\upsilon^{\tau}(U^{n_{\tau}})j(U^{n_{\tau}},z)^k=\nu^{\tau}(U^{n_{\tau}},z)=\upsilon(\tau U^{n_{\tau}} \tau^{-1}).
$$
It is only dependent on the equivalence class of $\tau \infty$ modulo $\Gamma$ and $\nu$.
\end{defn}

If $f$ is a meromorphic function on $\BH$, we define the $\gamma$-transform $f|_{k}\gamma$ of $f$ as:
$$
(f|_{k}\gamma)(z)=j(\gamma,z)^{-k}f(\gamma z).
$$

\begin{defn} A holomorphic function $f$ on $\BH$ is called a \emph{modular form} with respect to $\Gamma,k,\nu$ if it satisfies
$$
f(\gamma z)= \nu(\gamma,z)f(z)=\upsilon(\gamma) j(\gamma,z)^k f(z), \quad \forall \gamma \in \Gamma, z \in \BH
$$
and has a Fourier expansion at every cusp $\tau \infty$ of the form:
$$
(f|_k\tau)(z)=\sum_{\substack{m \in \BZ, \\ m+ \kappa_{\tau} \ge 0}} \widehat{(f|_k\tau)}(m) e^{\frac{2 \pi i (m+\kappa_{\tau}) z }{n_{\tau}}}.
$$
We say $f \in M(\Gamma,k,\nu)$. If moreover the sum can be restricted to $m+\kappa_{\tau}>0$ we say $f$ is a cusp form and we say $f \in S(\Gamma,k,\nu)$.
\end{defn}

\begin{rem} For $f \in S(\Gamma,k,\nu)$ it is evident, that $\eta_{\tau}=\min \{r | r \in \BR^+ \text{ and } r-\kappa_{\tau} \in \BZ\} $ parametrises a lower bound on the exponential decay at the cusp $\tau \infty$.
\end{rem}

The spaces $M(\Gamma,k,\nu)$ and $M(\Gamma,k,\nu)$ are finite dimensional and the latter can be made into a Hilbert space, by defining an inner product.

\begin{defn} The Petersson inner product on $S(\Gamma,k,\nu)$ is defined by
$$
\langle f,g \rangle_{\Gamma} = \frac{1}{\mu(\Gamma)} \int_{\BF_{\Gamma}} f(z)\overline{g(z)}y^k \frac{dxdy}{y^2},
$$
where $\mu(\Gamma)=[\SL2(\BZ):\Gamma]=[\operatorname{PSL}_2(\BZ):\hat{\Gamma}]$. It is indeed an inner product and is independent of the choice of the fundamental domain $\BF_{\Gamma}$ and independent of the subgroup $\Gamma$, i.e. if $\Gamma' \le \Gamma$ of finite index, then
$$
\langle f,g \rangle_{\Gamma}=\langle f,g \rangle_{\Gamma'}, \quad \forall f,g \in S(\Gamma,k,\nu) \supseteq S(\Gamma',k,\nu).
$$
\end{defn}

\begin{defn} For $k>2$ we define the $m$-th Poincar\'e series of weight $k$ at the cusp $\tau^{-1}\infty$, where $\tau \in \SL2(\BZ)$, with respect to $\Gamma, \nu$ as:
$$
G_{\tau}(\Gamma,k,\nu;z,m)=\sum_{\gamma \in \rquotient{\hat{\Gamma}_{\tau^{-1}\infty}}{\hat{\Gamma}}} \frac{\exp \left( \displaystyle \frac{2\pi i (m + \kappa_{\tau^{-1}})}{n_{\tau^{-1}}} \tau \gamma z \right)}{j(\tau,\gamma z)^k\nu(\gamma,z)}.
$$
\end{defn}

\begin{prop} The above Poincar\'e series converges locally uniformly on $\BH$ and defines thus a holomorphic function on $\BH$ and defines an unrestricted modular form of weight $k$ with respect to $\Gamma, \nu$.
\begin{enumerate}
	\item They satisfy the relations:
	$$
	G_{\tau_1}(\Gamma,k,\nu; \cdot,m)|_k \tau_2= \frac{1}{\sigma(\tau_1,\tau_2)}G_{\tau_1 \tau_2}(\Gamma^{\tau_2},k,\nu^{\tau_2}; \cdot,m);
	$$
	\item If $m+\kappa_{\tau^{-1}}>0$, then $G_{\tau}(\Gamma,k,\nu;\cdot,m) \in S(\Gamma,k,\nu)$;
	\item If $m+\kappa_{\tau^{-1}}=0$, then $G_{\tau}(\Gamma,k,\nu;\cdot,m) \in M(\Gamma,k,\nu)$ non-zero with $\ord(G_{\tau},\tau^{-1}\infty,\Gamma)=0$ and at the other cusps $\zeta \nequiv \tau^{-1}\infty \mod \Gamma$ one has $\ord(G_{\tau},\zeta,\Gamma)>0$;
	\item If $m+\kappa_{\tau^{-1}}<0$, then $\ord(G_{\tau},\tau^{-1}\infty,\Gamma)=m+\kappa$ and at the other cusps $\zeta \nequiv \tau^{-1}\infty \mod \Gamma$ one has $\ord(G_{\tau},\zeta,\Gamma)>0$.
\end{enumerate}
\end{prop}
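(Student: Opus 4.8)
The plan is to prove the four assertions in the order: (i) convergence, holomorphy and modularity; (ii) the conjugation law (1); (iii) the cusp behaviour (2)--(4), which I would read off from a single Fourier-expansion computation. For (i), I would estimate the $\gamma$-term in absolute value: by $|\nu(\gamma,z)|=|j(\gamma,z)|^k$ and the exact cocycle $j(\tau\gamma,z)=j(\tau,\gamma z)j(\gamma,z)$ (whence $\Im(\tau\gamma z)=\Im(z)\,|j(\tau\gamma,z)|^{-2}$), the modulus of the $\gamma$-term equals
$$
\exp\!\left(-\frac{2\pi(m+\kappa_{\tau^{-1}})}{n_{\tau^{-1}}}\,\Im(\tau\gamma z)\right)\,|j(\tau\gamma,z)|^{-k}.
$$
Since $\gamma\mapsto(\tau U^{n_{\tau^{-1}}}\tau^{-1})\gamma$ leaves the bottom row $(c,d)$ of $\tau\gamma\in\SL2(\BZ)$ unchanged, these bottom rows descend to $\hat{\Gamma}_{\tau^{-1}\infty}\backslash\hat{\Gamma}$, where (up to sign, boundedly-many-to-one) they run through a set of coprime integer pairs, and the positive-definite form $(c,d)\mapsto|cz+d|^2$ is bounded away from $0$ there; hence $\Im(\tau\gamma z)$ is bounded above over all $\gamma$, locally uniformly in $z$, so the exponential factor is $O_z(1)$ regardless of the sign of $m+\kappa_{\tau^{-1}}$, and the series is dominated by a constant times $\sum_{(c,d)}|cz+d|^{-k}$, which converges locally uniformly precisely because $k>2$; thus $G_\tau$ is holomorphic. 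For the transformation under $\delta\in\Gamma$, the map $\gamma\mapsto\gamma\delta$ permutes $\hat{\Gamma}_{\tau^{-1}\infty}\backslash\hat{\Gamma}$, and feeding this in together with $\nu(\gamma\delta,z)=\nu(\gamma,\delta z)\nu(\delta,z)$, $\tau\gamma\delta z=\tau\gamma(\delta z)$ and $j(\tau,(\gamma\delta)z)=j(\tau,\gamma(\delta z))$ yields $G_\tau(\delta z)=\nu(\delta,z)G_\tau(z)$ term by term; together with the Fourier expansions found in (iii), which carry only finitely many negative terms, this gives the ``unrestricted modular form'' claim.

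For (ii), I would write $(G_{\tau_1}|_k\tau_2)(z)=j(\tau_2,z)^{-k}G_{\tau_1}(\tau_2z)$ and substitute $\gamma=\tau_2\gamma'\tau_2^{-1}$. Conjugation by $\tau_2^{-1}$ is an isomorphism carrying $\hat{\Gamma}$ to $\widehat{\Gamma^{\tau_2}}$ and the stabilizer of $\tau_1^{-1}\infty$ to that of $(\tau_1\tau_2)^{-1}\infty$, so $\gamma'$ runs over $\widehat{\Gamma^{\tau_2}}_{(\tau_1\tau_2)^{-1}\infty}\backslash\widehat{\Gamma^{\tau_2}}$; the same isomorphism shows the width of $(\tau_1\tau_2)^{-1}\infty$ for $\Gamma^{\tau_2}$ is $n_{\tau_1^{-1}}$, and, using the defining formula for $\nu^{\tau_2}$ and the multiplier relation, that its cusp parameter is $\kappa_{\tau_1^{-1}}$, so the exponential frequencies agree. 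Now $\tau_1\gamma\tau_2=\tau_1\tau_2\gamma'$, $\gamma\tau_2z=\tau_2\gamma'z$, and the definition of $\nu^{\tau_2}$ rewrites $\nu(\gamma,\tau_2z)=\nu^{\tau_2}(\gamma',z)\,j(\tau_2,\gamma'z)^k/j(\tau_2,z)^k$; substituting these and using $j(\tau_1,\tau_2w)^kj(\tau_2,w)^k=\sigma(\tau_1,\tau_2)\,j(\tau_1\tau_2,w)^k$ turns the $\gamma$-term into $\sigma(\tau_1,\tau_2)^{-1}$ times the $\gamma'$-term of $G_{\tau_1\tau_2}(\Gamma^{\tau_2},k,\nu^{\tau_2};z,m)$, and summing over $\gamma'$ gives (1). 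The one genuinely fiddly point is the behaviour of the widths and cusp parameters under this conjugation, which needs care with $\sigma$ and with the definition of $\nu^{\tau_2}$.

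For (iii), by (1) the function $G_\tau|_k\zeta$ (for $\zeta\in\SL2(\BZ)$) is a scalar of modulus one times $G_{\tau\zeta}(\Gamma^\zeta,k,\nu^\zeta;\cdot,m)$, so the Fourier expansion of $G_\tau$ at the cusp $\zeta\infty$ reduces to that of a Poincaré series at $\infty$, which I would compute exactly as in the classical integral-weight case. Grouping the sum over $\widehat{\Gamma^\zeta}_{(\tau\zeta)^{-1}\infty}\backslash\widehat{\Gamma^\zeta}$ into double cosets modulo the stabilizer $\widehat{\Gamma^\zeta}_\infty$ and applying Poisson summation in the remaining translation variable, the trivial double coset (present exactly when $\zeta\infty\equiv\tau^{-1}\infty \mod \Gamma$) contributes one ``diagonal'' term proportional to $\exp\!\big(\tfrac{2\pi i(m+\kappa_{\tau^{-1}})}{n_{\tau^{-1}}}z\big)$, while every non-trivial double coset contributes a Fourier series $\sum_\ell c_\ell\exp\!\big(\tfrac{2\pi i(\ell+\kappa_\zeta)}{n_\zeta}z\big)$ with $c_\ell$ a Kloosterman-type sum times a Bessel integral, and $c_\ell=0$ unless $\ell+\kappa_\zeta>0$; the vanishing for $\ell+\kappa_\zeta\le 0$ comes from pushing the line of integration up to $+i\infty$, where the integrand decays ($k>2$) and no pole of $j(\cdot)^{-k}$ is crossed. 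Reading off the leading Fourier exponent at each cusp then yields the three cases simultaneously: if $m+\kappa_{\tau^{-1}}>0$ every frequency at every cusp is positive, so $G_\tau\in S(\Gamma,k,\nu)$; if $m+\kappa_{\tau^{-1}}=0$ (which forces $m=0$, $\kappa_{\tau^{-1}}=0$) the diagonal term at $\tau^{-1}\infty$ is the nonzero constant $1$ while every other frequency there, and every frequency at each cusp $\zeta\infty\nequiv\tau^{-1}\infty$, is positive, so $\ord(G_\tau,\tau^{-1}\infty,\Gamma)=0$, $\ord(G_\tau,\zeta\infty,\Gamma)>0$ for $\zeta\infty\nequiv\tau^{-1}\infty$, and in particular $G_\tau\ne 0$ and $G_\tau\in M(\Gamma,k,\nu)$; and if $m+\kappa_{\tau^{-1}}<0$ the diagonal term at $\tau^{-1}\infty$ has the negative frequency $(m+\kappa_{\tau^{-1}})/n_{\tau^{-1}}$, which beats all the positive off-diagonal frequencies, so $\ord(G_\tau,\tau^{-1}\infty,\Gamma)=m+\kappa_{\tau^{-1}}$, while $\ord(G_\tau,\zeta\infty,\Gamma)>0$ for $\zeta\infty\nequiv\tau^{-1}\infty$.

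I expect the main obstacle to be step (iii): setting up the double-coset decomposition and Poisson summation in the real-weight setting with the correct multiplier and cusp-parameter data, and in particular justifying (uniformly, and with no sign restriction on $m+\kappa_{\tau^{-1}}$) that the non-positive-frequency coefficients vanish, via the contour shift relying on $k>2$. By comparison, (i) and (ii) are essentially bookkeeping, the only delicate part being the transformation of widths and cusp parameters under conjugation in (ii).
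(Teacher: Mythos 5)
The paper does not actually prove this proposition: it is quoted verbatim from \cite{MFaF} (Theorem 5.1.2), so there is no in-paper argument to compare against; your outline is essentially the standard proof given in that reference — domination of the series by $\sum_{(c,d)}|cz+d|^{-k}$ after observing that $\Im(\tau\gamma z)$ is locally uniformly bounded across the cosets (so $k>2$ gives locally uniform convergence irrespective of the sign of $m+\kappa_{\tau^{-1}}$), term-by-term verification of the $\Gamma$-transformation law, relation (1) via the substitution $\gamma\mapsto\tau_2^{-1}\gamma\tau_2$ together with the definitions of $\nu^{\tau_2}$ and $\sigma$, and the cusp behaviour read off from the Kloosterman--Bessel Fourier expansion, which the paper also only quotes (Rankin's Theorem 5.3.2). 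Two points you should still nail down to make this complete: first, the summand must be shown to be independent of the choice of representative in $\hat{\Gamma}_{\tau^{-1}\infty}\backslash\hat{\Gamma}$ — this is exactly where the cusp parameter $\kappa_{\tau^{-1}}$ and the multiplier relation enter, and you never verify it; second, the identification of the width and cusp parameter of $(\tau_1\tau_2)^{-1}\infty$ for $(\Gamma^{\tau_2},\nu^{\tau_2})$ with $n_{\tau_1^{-1}}$ and $\kappa_{\tau_1^{-1}}$, which you correctly flag as the delicate bookkeeping (the paper itself defers this identification to \cite{MFaF} in the proof of its Corollary \ref{cor:Fouriersqr}). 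Granting these checks and the full Poisson-summation computation you only sketch in step (iii), your route is the same as the cited one.
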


\begin{proof} See \cite{MFaF} Theorem 5.1.2 page 136.
\end{proof}

\begin{prop} We have for $k>2$, $f \in S(\Gamma,k,\nu)$, $\tau \in \SL2(\BZ) \backslash \{-U^l |l \in \BZ \}$, $m+\kappa_{\tau} \ge 0$:
$$
\langle f, G_{\tau}(\Gamma,k,\nu; \cdot ,m)\rangle = \frac{n_{\tau^{-1}}^k\Gamma(k-1)}{\mu(\Gamma) (4 \pi (m+\kappa_{\tau^{-1}}))^{k-1}}  \widehat{(f|_k \tau^{-1})}(m).
$$
\end{prop}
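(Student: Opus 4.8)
The plan is to run the classical unfolding of a Poincar\'e series against a cusp form, with some extra care for the real-weight automorphy factors. Abbreviate $n:=n_{\tau^{-1}}$ and $\kappa:=\kappa_{\tau^{-1}}$, and recall that $\hat\Gamma_{\tau^{-1}\infty}$ is generated by $\delta_{0}:=\tau^{-1}U^{n}\tau$, so that $\tau\delta_{0}=U^{n}\tau$ (whence $\tau\delta_{0}w=\tau w+n$) and $\upsilon(\delta_{0})=e^{2\pi i\kappa}$. Substituting the definition of $G_{\tau}(\Gamma,k,\nu;\cdot,m)$ into the Petersson integral and using $f(\gamma z)=\nu(\gamma,z)f(z)$, $\Im(\gamma z)=y/|j(\gamma,z)|^{2}$, and $\overline{\nu(\gamma,z)}\,\nu(\gamma,z)=|j(\gamma,z)|^{2k}$ (so that $1/\overline{\nu(\gamma,z)}=\nu(\gamma,z)/|j(\gamma,z)|^{2k}$), the term of the integrand $f(z)\,\overline{G_{\tau}(z,m)}\,y^{k}$ indexed by $\gamma$ collapses to $\Phi(\gamma z)$, where
$$
\Phi(w):=f(w)\,\Im(w)^{k}\;\overline{\,j(\tau,w)^{-k}\exp\!\Big(\tfrac{2\pi i(m+\kappa)}{n}\,\tau w\Big)}.
$$
Since the $\gamma$-th summand of $G_{\tau}$ depends only on the coset $\hat\Gamma_{\tau^{-1}\infty}\gamma$ (part of the cited Proposition), so does the $\gamma$-th term $\Phi(\gamma z)$ of the integrand; comparing the representatives $\gamma=I$ and $\gamma=\delta$ for $\delta\in\hat\Gamma_{\tau^{-1}\infty}$ gives $\Phi(\delta z)=\Phi(z)$, so $\Phi$ is $\hat\Gamma_{\tau^{-1}\infty}$-invariant. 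Moreover $y^{k/2}|f|$ is bounded on $\BF_{\Gamma}$ and, for $k>2$, the Poincar\'e series converges absolutely, so $\sum_{\gamma}|\Phi(\gamma z)|$ is integrable over $\BF_{\Gamma}$; hence the unfolding
$$
\big\langle f,\,G_{\tau}(\Gamma,k,\nu;\cdot,m)\big\rangle_{\Gamma}=\frac{1}{\mu(\Gamma)}\int_{\hat\Gamma_{\tau^{-1}\infty}\backslash\BH}\Phi(w)\,\frac{dx\,dy}{y^{2}}
$$
is legitimate.

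Next I would pass to the local coordinate at the cusp $\tau^{-1}\infty$ by the substitution $w\mapsto\tau^{-1}w$. Since $\tau\,\hat\Gamma_{\tau^{-1}\infty}\,\tau^{-1}=\langle U^{n}\rangle$, this carries $\hat\Gamma_{\tau^{-1}\infty}\backslash\BH$ to the vertical strip $\{\,0\le\Re w<n,\ \Im w>0\,\}$, and the hyperbolic measure is invariant. Using $f(\tau^{-1}w)=j(\tau^{-1},w)^{k}(f|_{k}\tau^{-1})(w)$, the cocycle identity $j(\tau,\tau^{-1}w)=j(\tau^{-1},w)^{-1}$, and $\Im(\tau^{-1}w)=\Im(w)/|j(\tau^{-1},w)|^{2}$, all powers of $|j(\tau^{-1},w)|$ cancel and one obtains $\Phi(\tau^{-1}w)=(f|_{k}\tau^{-1})(w)\,\Im(w)^{k}\,\overline{\exp(2\pi i(m+\kappa)w/n)}$ (the modulus-one branch ambiguities among the $k$-th powers occur only on a set of measure zero and do not affect the integral). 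Writing $w=x+iy$, so that $\overline{\exp(2\pi i(m+\kappa)w/n)}=e^{-2\pi i(m+\kappa)x/n}\,e^{-2\pi(m+\kappa)y/n}$, and inserting the Fourier expansion $(f|_{k}\tau^{-1})(w)=\sum_{m'}\widehat{(f|_{k}\tau^{-1})}(m')\,e^{2\pi i(m'+\kappa)x/n}\,e^{-2\pi(m'+\kappa)y/n}$ (absolutely convergent on each horizontal line, hence integrable term by term in $x$), the inner integral $\int_{0}^{n}e^{2\pi i(m'-m)x/n}\,dx$ equals $n$ when $m'=m$ and $0$ otherwise, using $m,m'\in\BZ$; only the term $m'=m$ survives.

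It then remains to evaluate the outer integral, which is the elementary Gamma integral: for $m+\kappa>0$ and $k>1$,
$$
\int_{0}^{\infty}e^{-4\pi(m+\kappa)y/n}\,y^{k-2}\,dy=\Gamma(k-1)\Big(\frac{n}{4\pi(m+\kappa)}\Big)^{k-1}.
$$
Combining this with the factor $n$ from the $x$-integration and dividing by $\mu(\Gamma)$ yields
$$
\big\langle f,\,G_{\tau}(\Gamma,k,\nu;\cdot,m)\big\rangle_{\Gamma}=\frac{n_{\tau^{-1}}^{\,k}\,\Gamma(k-1)}{\mu(\Gamma)\,\big(4\pi(m+\kappa_{\tau^{-1}})\big)^{k-1}}\;\widehat{(f|_{k}\tau^{-1})}(m),
$$
which is the asserted identity.

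The one step I expect to be genuinely delicate is the invariance of $\Phi$ under $\hat\Gamma_{\tau^{-1}\infty}$: this is exactly the place where the multiplier system $\upsilon$, the cusp parameter $\kappa_{\tau^{-1}}$, and the branch of $w\mapsto w^{k}$ have to be reconciled, and it is precisely the bookkeeping already carried out in showing that $G_{\tau}$ is well defined, so it costs nothing to invoke. Everything after the unfolding is routine real-variable calculus. Finally, the hypothesis $\tau\notin\{-U^{\ell}:\ell\in\BZ\}$ serves only to exclude the degenerate situations in which $\tau^{-1}\infty=\infty$ and $G_{\tau}$ is a harmless constant multiple of the Poincar\'e series attached to $\infty$; and the borderline case $m+\kappa_{\tau^{-1}}=0$ (which forces $\kappa_{\tau^{-1}}=0$, $m=0$, and $\widehat{(f|_{k}\tau^{-1})}(0)=0$ for the cusp form $f$) is trivial.
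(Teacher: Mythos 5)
The paper offers no argument for this proposition at all---it simply cites \cite{MFaF}, Theorem 5.2.2---so your unfolding computation is, for practical purposes, the standard (and presumably the cited) proof, and it is essentially correct: the term-wise collapse of $f(z)\overline{G_{\tau}(z,m)}\,y^{k}$ to $\Phi(\gamma z)$, the unfolding to $\hat{\Gamma}_{\tau^{-1}\infty}\backslash\BH$, the change of variable $w\mapsto\tau^{-1}w$, the orthogonality of the $x$-integral and the Gamma integral in $y$ all go through and give exactly the stated constant. Two of your side remarks are imprecise, though neither breaks the argument. First, integrability: boundedness of $y^{k/2}|f|$ plus pointwise absolute convergence of the Poincar\'e series does not by itself give integrability over the noncompact $\BF_{\Gamma}$ (for $k\ge 2$ the integral $\int^{\infty}y^{k/2-2}\,dy$ diverges up a cusp); what one actually uses is $\sum_{\gamma}|\Phi(\gamma z)|\le |f(z)|\,y^{k}\sum_{\gamma}|j(\tau\gamma,z)|^{-k}$, where the Eisenstein-type majorant is bounded near each cusp for $k>2$, together with the exponential decay of $y^{k/2}|f|$ at every cusp of $\BF_{\Gamma}$. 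Second, the hypothesis $\tau\notin\{-U^{\ell}\,|\,\ell\in\BZ\}$ is not there to exclude the ``degenerate'' situation $\tau^{-1}\infty=\infty$ (the case $\tau=U^{\ell}$ is allowed); it is exactly what makes your branch step exact rather than ``measure zero'': if the lower-left entry of $\tau^{-1}$ is nonzero then $j(\tau^{-1},w)$ is never a negative real for $w\in\BH$, and if it is zero the hypothesis forces $j(\tau^{-1},w)\equiv 1$, so $j(\tau,\tau^{-1}w)^{-k}=j(\tau^{-1},w)^{k}$ holds identically with no ambiguity at all; whereas for $\tau=-U^{-\ell}$ one would have $j(\tau^{-1},w)\equiv -1$ and the discrepancy $e^{-2\pi i k}$ would be a constant phase on all of $\BH$, not a null set. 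Finally, in the borderline case $m+\kappa_{\tau^{-1}}=0$ the displayed right-hand side is formally $0/0$; the correct reading, which your unfolding in fact proves, is that the inner product vanishes.
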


\begin{proof} See \cite{MFaF} Theorem 5.2.2 page 149.
\end{proof}

\begin{prop} For $\tau \in \SL2(\BZ)$ the Poincar\'e series with $k>2$ satisfy the following equality:
$$
G_{\tau}(\Gamma,k,\nu; z,m)=\delta_{\tau} e^{\frac{2\pi i(m+\kappa_{I})z}{n_{I}}} + \sum_{r+\kappa_I>0} a(r,m;\tau) e^{\frac{2 \pi i (r+\kappa_{\tau^{-1}})z}{n_{\tau^{-1}}}},
$$
where 
$$
\delta_{\tau}= \begin{cases}\frac{\displaystyle e^{\frac{2\pi i s(m+\kappa_{I})}{n_I}}}{\upsilon(\tau^{-1}U^s)\sigma(\tau,\tau^{-1})}, & \text{if } \tau^{-1}U^s \in \Gamma, \text{for some }s \in \BZ, \\ 0, & \text{else}, \end{cases}
$$
and
$$
a(r,m;\tau)=\begin{cases} \displaystyle
\frac{(2 \pi)^k}{\Gamma(k)}i^{-k}(r+\kappa_I)^{k-1}\sum_{c=1}^{\infty} \frac{W(\Gamma,\nu;r,m;c)}{(n_Ic)^k}, & \text{if } m+\kappa_{\tau^{-1}}=0, \\ \displaystyle

2\pi i^{-k} \left(\frac{n_{\tau^{-1}}}{n_I}\right)^{\frac{k-1}{2}} \left( \frac{r+\kappa_I}{m+\kappa_{\tau^{-1}}} \right)^{\frac{k-1}{2}} \\ \displaystyle \quad \times \sum_{c=1}^{\infty} \frac{W(\Gamma,\nu;r,m;c)}{n_I c}J_{k-1}\left( \frac{4\pi}{c} \sqrt{\frac{(r+\kappa_I)(m+\kappa_{\tau^{-1}})}{n_In_{\tau^{-1}}}} \right), & \text{if }m+\kappa_{\tau^{-1}}>0, \\ \displaystyle

2\pi i^{-k} \left(\frac{n_{\tau^{-1}}}{n_I}\right)^{\frac{k-1}{2}} \left | \frac{r+\kappa_I}{m+\kappa_{\tau^{-1}}} \right |^{\frac{k-1}{2}} \\ \displaystyle \quad \times \sum_{c=1}^{\infty} \frac{W(\Gamma,\nu;r,m;c)}{n_I c}I_{k-1}\left( \frac{4\pi}{c} \sqrt{\frac{(r+\kappa_I)|m+\kappa_{\tau^{-1}}|}{n_In_{\tau^{-1}}}} \right), & \text{if }m+\kappa_{\tau^{-1}}<0.
\end{cases}
$$
The Bessel functions $J_{k-1},I_{k-1}$ are given by:
$$\begin{aligned}
J_{k-1}(z) &= \sum_{m=0}^{\infty} \frac{(-1)^m(\frac{z}{2})^{2m+k-1}}{\Gamma(m+1)\Gamma(m+k)}, \\
I_{k-1}(z) &= \sum_{m=0}^{\infty} \frac{(\frac{z}{2})^{2m+k-1}}{\Gamma(m+1)\Gamma(m+k)},
\end{aligned}$$
and the generalized Kloosterman sum is given by:
$$
W(\Gamma,\nu;r,m;c)= \sum_{ \displaystyle \gamma = \begin{pmatrix} a & * \\ c & d \end{pmatrix} \in \tau \CJ_{\tau} }
\frac{\displaystyle \exp\left( \frac{2\pi i}{c} \left( \frac{(m+\kappa_{\tau^{-1}})a}{n_{\tau^{-1}}}+ \frac{(r+\kappa_I)d}{n_I} \right) \right)}{\upsilon(\tau^{-1}\gamma)\sigma(\tau,\tau^{-1})}\sigma(\tau^{-1},\gamma),
$$
where $\CJ_{\tau}$ is the double coset
$$
\CJ_{\tau} = \lrquotient{\hat{\Gamma}_{\tau^{-1}\infty}}{\hat{\Gamma} - \tau^{-1}\{U^s|s \in \BZ\}}{\hat{\Gamma}_{\infty}}.
$$
\end{prop}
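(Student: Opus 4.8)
The plan is to compute the Fourier expansion of $G_{\tau}$ at the cusp $I\infty=\infty$ directly from the defining series, by the classical unfolding-and-Poisson-summation argument adapted to the multiplier system $\upsilon$; absolute convergence for $k>2$ (which is what makes $G_{\tau}$ well defined) licenses every rearrangement and every interchange of summation with integration below. First I would tidy up the summand: writing $\nu(\gamma,z)=\upsilon(\gamma)j(\gamma,z)^{k}$ and using $j(\tau\gamma,z)^{k}=\sigma(\tau,\gamma)j(\tau,\gamma z)^{k}j(\gamma,z)^{k}$, the $\gamma$-term of $G_{\tau}$ becomes $\upsilon(\gamma)^{-1}\sigma(\tau,\gamma)^{-1}j(\tau\gamma,z)^{-k}\exp\!\big(\tfrac{2\pi i(m+\kappa_{\tau^{-1}})}{n_{\tau^{-1}}}\tau\gamma z\big)$. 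Setting $\delta:=\tau\gamma$, the map $\gamma\mapsto\tau\gamma$ identifies $\hat\Gamma_{\tau^{-1}\infty}\backslash\hat\Gamma$ with $\langle U^{n_{\tau^{-1}}}\rangle\backslash\tau\hat\Gamma$ (using that $\hat\Gamma_{\tau^{-1}\infty}$ is generated by $\tau^{-1}U^{n_{\tau^{-1}}}\tau$), and on the latter left multiplication by $U^{n_{\tau^{-1}}}$ shifts the upper-left entry $a$ of $\delta$ by $n_{\tau^{-1}}c$ while fixing its lower-left entry $c$ and its lower-right entry $d$. I then split the sum according to whether $c=0$.

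\emph{The term $c=0$.} Such a coset is represented by $\delta=U^{s}$, i.e.\ $\gamma=\tau^{-1}U^{s}$, which lies in $\hat\Gamma$ precisely when $\tau^{-1}U^{s}\in\Gamma$; when this happens it is the unique $c=0$ coset, and then $\tau^{-1}\infty$ is $\Gamma$-equivalent to $\infty$, so that $n_{\tau^{-1}}=n_{I}$ and $\kappa_{\tau^{-1}}=\kappa_{I}$. Evaluating this single term — using $j(U^{s},z)=1$, $U^{s}z=z+s$, $\sigma(\tau^{-1},U^{s})=1$ and hence $\sigma(\tau,\tau^{-1}U^{s})=\sigma(\tau,\tau^{-1})$ — produces precisely $\delta_{\tau}e^{2\pi i(m+\kappa_{I})z/n_{I}}$; if no $\tau^{-1}U^{s}$ lies in $\Gamma$ this contribution is absent, matching $\delta_{\tau}=0$.

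\emph{The terms $c\neq0$.} Since $-I\in\Gamma$ we may take $c>0$, and we further quotient on the right by $\hat\Gamma_{\infty}=\langle U^{n_{I}}\rangle$, under which $\delta\mapsto\delta U^{\ell n_{I}}$ shifts $d$ by $\ell n_{I}c$ and fixes $a$ and $c$. For each fixed $c$ what is left is a sum over the residues $a\bmod n_{\tau^{-1}}c$ and $d\bmod n_{I}c$ with $ad\equiv1\pmod{c}$ — that is, over the classes in the double coset $\CJ_{\tau}=\hat\Gamma_{\tau^{-1}\infty}\backslash(\hat\Gamma-\tau^{-1}\{U^{s}\})/\hat\Gamma_{\infty}$ of the statement, the removed set being exactly the $c=0$ coset already handled — together with the $\ell$-sum over the right $\hat\Gamma_{\infty}$-translates. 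Using $\delta z=\tfrac{a}{c}-\tfrac1{c^{2}(z+d/c)}$ and $j(\delta,z)^{k}=c^{k}(z+d/c)^{k}$, and collapsing the $\ell$-sum by Poisson summation (the twist $\upsilon(U^{\ell n_{I}})=e^{2\pi i\ell\kappa_{I}}$ accounting for the non-integral frequency), the contribution of the fixed-$c$ part to the $r$-th Fourier coefficient at $\infty$ becomes, up to the exponential $e^{2\pi i((m+\kappa_{\tau^{-1}})a/n_{\tau^{-1}}+(r+\kappa_{I})d/n_{I})/c}$ and the multiplier attached to the class,
\[
\frac{1}{n_{I}\,c^{k}}\int_{\Im w=y}\frac{\exp\!\big(-\tfrac{2\pi i(m+\kappa_{\tau^{-1}})}{n_{\tau^{-1}}c^{2}w}\big)}{w^{k}}\,\exp\!\Big(-\tfrac{2\pi i(r+\kappa_{I})}{n_{I}}w\Big)\,dw .
\]

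It remains to evaluate this integral, for which I would quote the classical Bessel/Lipschitz integral formulas (as in \cite{MFaF}): it vanishes when $r+\kappa_{I}<0$ — which is why the expansion runs only over $r+\kappa_{I}>0$, and why $G_{\tau}$ is a cusp form as soon as $m+\kappa_{\tau^{-1}}>0$ — while for $r+\kappa_{I}>0$ it equals a constant multiple of $(r+\kappa_{I})^{k-1}c^{-k}$ when $m+\kappa_{\tau^{-1}}=0$ (yielding the prefactor $(2\pi)^{k}i^{-k}/\Gamma(k)$), a value of $J_{k-1}$ when $m+\kappa_{\tau^{-1}}>0$, and a value of $I_{k-1}$ when $m+\kappa_{\tau^{-1}}<0$, in each case with argument $\tfrac{4\pi}{c}\sqrt{(r+\kappa_{I})|m+\kappa_{\tau^{-1}}|/(n_{I}n_{\tau^{-1}})}$. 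Summing the accompanying exponentials $e^{2\pi i((m+\kappa_{\tau^{-1}})a/n_{\tau^{-1}}+(r+\kappa_{I})d/n_{I})/c}$, weighted by the class multipliers, over the classes of $\CJ_{\tau}$ with lower-left entry $c$ then gives exactly $W(\Gamma,\nu;r,m;c)$ — here one rewrites $\upsilon(\gamma)^{-1}\sigma(\tau,\gamma)^{-1}$ with $\gamma=\tau^{-1}\delta$ by means of the $2$-cocycle identity $\sigma(\tau,\tau^{-1}\delta)\sigma(\tau^{-1},\delta)=\sigma(\tau,\tau^{-1})$ — and summing over $c\ge1$ produces $a(r,m;\tau)$. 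The only genuinely delicate point is the bookkeeping: carrying the multiplier $\upsilon$, the $2$-cocycle $\sigma$, the fixed branch of $w\mapsto w^{k}$ (hence the factors $i^{-k}$), and the two cusp widths $n_{I},n_{\tau^{-1}}$ cleanly through the coset splitting and the Poisson step, so that the generalized Kloosterman sum emerges in precisely the asserted normalisation.
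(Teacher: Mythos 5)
The paper offers no argument of its own here --- it simply cites \cite{MFaF}, Theorem 5.3.2 --- and your sketch reconstructs exactly that classical computation: unfold over $\hat\Gamma_{\tau^{-1}\infty}\backslash\hat\Gamma$ via $\delta=\tau\gamma$, isolate the $c=0$ coset to obtain $\delta_\tau$, collapse the right $\hat\Gamma_\infty$-translates against the character $e^{2\pi i\ell\kappa_I}$, and evaluate the resulting line integrals by the Lipschitz/Bessel formulas, with the cocycle identity $\sigma(\tau,\tau^{-1}\delta)\sigma(\tau^{-1},\delta)=\sigma(\tau,\tau^{-1})$ yielding the stated normalization of $W$; this is correct in outline and is the same route as the cited source. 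One minor slip: with the paper's definition of $\sigma$ the relevant identity is $j(\tau,\gamma z)^k j(\gamma,z)^k=\sigma(\tau,\gamma)\,j(\tau\gamma,z)^k$, the reciprocal of what you wrote, although the summand $\upsilon(\gamma)^{-1}\sigma(\tau,\gamma)^{-1}j(\tau\gamma,z)^{-k}\exp(\cdots)$ you actually work with is the correct one.
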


\begin{rem} If $\delta_\tau \neq 0$, then $n_I=n_{\tau^{-1}}$ and $\kappa_I=\kappa_{\tau^{-1}}$.
\end{rem}

\begin{proof} See \cite{MFaF} Theorem 5.3.2 page 162.
\end{proof}

\begin{cor} \label{cor:Fouriersqr} Let $\{f_j\}$ be an orthonormal basis of $S(\Gamma,k,\nu)$, $\tau \in \SL2(\BZ) \backslash \{-U^l |l \in \BZ \}$ and $m+\kappa_{\tau^{-1}}\ge 0$, then we have:
$$\begin{aligned}
\sum_j |\widehat{(f_j|_k\tau^{-1})}(m)|^2 = &\frac{\mu(\Gamma)(4\pi(m+\kappa_{\tau^{-1}}))^{k-1}}{n_{\tau^{-1}}^k\Gamma(k-1)} \\ &\quad \times \left(1+2\pi i^{-k} \sum_{c=1}^{\infty} \frac{W(\Gamma^{\tau^{-1}},\nu^{\tau^{-1}};m,m;c)}{n_{\tau^{-1}}c} J_{k-1}\left( \frac{4\pi(m+\kappa_{\tau^{-1}})}{cn_{\tau^{-1}}} \right) \right).
\end{aligned}$$
\end{cor}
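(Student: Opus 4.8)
The plan is to derive this from the two preceding Propositions by a Parseval argument applied to the Poincar\'e series $G_\tau:=G_{\tau}(\Gamma,k,\nu;\cdot,m)$, identifying the resulting norm $\langle G_\tau,G_\tau\rangle$ through the Fourier expansion. First one disposes of the degenerate case $m+\kappa_{\tau^{-1}}=0$: then every $f_j\in S(\Gamma,k,\nu)$ satisfies $\widehat{(f_j|_k\tau^{-1})}(m)=0$, while on the right-hand side both $(4\pi(m+\kappa_{\tau^{-1}}))^{k-1}$ and $J_{k-1}(0)$ vanish, so the identity reads $0=0$. Hence we may assume $m+\kappa_{\tau^{-1}}>0$, in which case $G_\tau\in S(\Gamma,k,\nu)$ by the Proposition on Poincar\'e series.

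Set $c_\tau:=\dfrac{n_{\tau^{-1}}^{k}\Gamma(k-1)}{\mu(\Gamma)(4\pi(m+\kappa_{\tau^{-1}}))^{k-1}}>0$. By the Petersson formula for Poincar\'e series, $\langle f_j,G_\tau\rangle=c_\tau\,\widehat{(f_j|_k\tau^{-1})}(m)$ for every $j$; expanding $G_\tau=\sum_j\langle G_\tau,f_j\rangle f_j$ and using $\langle G_\tau,f_j\rangle=\overline{\langle f_j,G_\tau\rangle}$ together with Parseval gives
$$
\langle G_\tau,G_\tau\rangle=\sum_j|\langle f_j,G_\tau\rangle|^2=c_\tau^{2}\sum_j\big|\widehat{(f_j|_k\tau^{-1})}(m)\big|^2 .
$$
Applying the same Petersson formula once more, now with $f=G_\tau\in S(\Gamma,k,\nu)$, yields $\langle G_\tau,G_\tau\rangle=c_\tau\,\widehat{(G_\tau|_k\tau^{-1})}(m)$, hence
$$
\sum_j\big|\widehat{(f_j|_k\tau^{-1})}(m)\big|^2=\frac{1}{c_\tau}\,\widehat{(G_\tau|_k\tau^{-1})}(m)=\frac{\mu(\Gamma)(4\pi(m+\kappa_{\tau^{-1}}))^{k-1}}{n_{\tau^{-1}}^{k}\Gamma(k-1)}\,\widehat{(G_\tau|_k\tau^{-1})}(m).
$$
It remains to compute the single Fourier coefficient $\widehat{(G_\tau|_k\tau^{-1})}(m)$. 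By the transformation relation for Poincar\'e series (part (1)) with $\tau_1=\tau$, $\tau_2=\tau^{-1}$ we have $G_\tau|_k\tau^{-1}=\sigma(\tau,\tau^{-1})^{-1}G_{I}(\Gamma^{\tau^{-1}},k,\nu^{\tau^{-1}};\cdot,m)$, and since $\tau\notin\{-U^l:l\in\BZ\}$ the value $j(\tau^{-1},z)$ never lies in $\BR_{\le 0}$ for $z\in\BH$, whence $\sigma(\tau,\tau^{-1})=j(\tau,\tau^{-1}z)^{k}j(\tau^{-1},z)^{k}=1$.

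Finally I would feed $G_{I}(\Gamma^{\tau^{-1}},k,\nu^{\tau^{-1}};\cdot,m)$ into the Fourier-expansion Proposition with $\tau$ replaced by $I$ and $(\Gamma,\nu)$ replaced by $(\Gamma^{\tau^{-1}},\nu^{\tau^{-1}})$. Conjugation by $\tau$ carries the cusp $\tau^{-1}\infty$ of $\Gamma$ to the cusp $\infty$ of $\Gamma^{\tau^{-1}}$, so $n_{I}(\Gamma^{\tau^{-1}})=n_{\tau^{-1}}$ and $\kappa_{I}(\Gamma^{\tau^{-1}},\nu^{\tau^{-1}})=\kappa_{\tau^{-1}}$; for $\tau=I$ the leading coefficient is $\delta_{I}=1$ (because $I\cdot U^{0}\in\Gamma^{\tau^{-1}}$, $\upsilon(I)=1$ and $\sigma(I,I)=1$), and since $I^{-1}=I$ the case $m+\kappa_{\tau^{-1}}>0$ of the coefficient formula collapses to
$$
a(m,m;I)=2\pi i^{-k}\sum_{c=1}^{\infty}\frac{W(\Gamma^{\tau^{-1}},\nu^{\tau^{-1}};m,m;c)}{n_{\tau^{-1}}c}\,J_{k-1}\!\left(\frac{4\pi(m+\kappa_{\tau^{-1}})}{cn_{\tau^{-1}}}\right).
$$
Reading off the $m$-th Fourier coefficient gives $\widehat{(G_\tau|_k\tau^{-1})}(m)=\sigma(\tau,\tau^{-1})^{-1}\big(\delta_{I}+a(m,m;I)\big)=1+a(m,m;I)$, and substituting into the displayed identity above yields exactly the asserted formula. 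The only points requiring care—none of them deep—are the compatibility of cusp width and cusp parameter under conjugation by $\tau$ (so that the data of $(\Gamma^{\tau^{-1}},\nu^{\tau^{-1}})$ at $\infty$ is that of $(\Gamma,\nu)$ at $\tau^{-1}\infty$), the verification $\sigma(\tau,\tau^{-1})=1$ under the hypothesis $\tau\notin\{-U^l\}$, and the correct specialization $\tau=I$, $(\Gamma,\nu)\rightsquigarrow(\Gamma^{\tau^{-1}},\nu^{\tau^{-1}})$ of the coefficient formula; the rest is bookkeeping.
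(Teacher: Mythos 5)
Your argument is correct and follows essentially the paper's own route: both rest on the Petersson inner-product formula for Poincar\'e series, the expansion of $G_{\tau}(\Gamma,k,\nu;\cdot,m)$ over the orthonormal basis, the transformation $G_{\tau}|_k\tau^{-1}=\sigma(\tau,\tau^{-1})^{-1}G_{I}(\Gamma^{\tau^{-1}},k,\nu^{\tau^{-1}};\cdot,m)$, and the Fourier-expansion proposition at $\infty$ with $\delta_I=1$, $n_I=n_{\tau^{-1}}$, $\kappa_I=\kappa_{\tau^{-1}}$. The only cosmetic difference is that you evaluate $\langle G_{\tau},G_{\tau}\rangle$ via Parseval plus a second application of the Petersson formula, while the paper reads off the $m$-th Fourier coefficient of the basis expansion of $G_{\tau}|_k\tau^{-1}$ directly; your explicit checks that $\sigma(\tau,\tau^{-1})=1$ under the hypothesis $\tau\notin\{-U^l\,|\,l\in\BZ\}$ and that the case $m+\kappa_{\tau^{-1}}=0$ is trivially $0=0$ are welcome refinements that the paper leaves implicit.
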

\begin{proof}
We have
$$\begin{aligned}
G_{\tau}(\Gamma,k,\nu;z,m) &= \sum_j \langle G_{\tau}(\Gamma,k,\nu;\cdot,m) ,f_j \rangle f_j(z) \\ &= \frac{n_{\tau^{-1}}^k\Gamma(k-1)}{\mu(\Gamma) (4 \pi (m+\kappa_{\tau^{-1}}))^{k-1}} \sum_j \overline{\widehat{(f_j|_k \tau^{-1})}(m)}f_j(z)
\end{aligned}$$
and henceforth
$$\begin{aligned}
\sum_j\overline{\widehat{(f_j|_k \tau^{-1})}(m)}(f_j|_k\tau^{-1})(z) &= \frac{\mu(\Gamma)(4\pi(m+\kappa_{\tau^{-1}}))^{k-1}}{n_{\tau^{-1}}^k\Gamma(k-1)} (G_{\tau}(\Gamma,k,\nu;\cdot,m)|_k \tau^{-1})(z) \\
&= \frac{\mu(\Gamma)(4\pi(m+\kappa_{\tau^{-1}}))^{k-1}}{n_{\tau^{-1}}^k\Gamma(k-1)} G_{I}(\Gamma^{\tau^{-1}},k,\nu^{\tau^{-1}};z,m).
\end{aligned}$$
Using the previous theorem we can easily deduce the $m$-th Fourier coefficient at $\infty$. To verify the equality one easily checks that $\delta_I=1$ and $n_I$ for $\Gamma^{\tau^{-1}}$ is equal to $n_{\tau^{-1}}$ for $\Gamma$. And we refer to \cite{MFaF} to see that $\kappa_I$ for $\Gamma^{\tau^{-1}},\nu^{\tau^{-1}}$ is equal to $\kappa_{\tau^{-1}}$ for $\Gamma,\nu$.
\end{proof}

\begin{defn} We define the Bergman kernel for $\Gamma, \nu, k>2$ on $\BH^2$ as
$$
h(z,w)=\sum_{\gamma \in \Gamma} \frac{1}{\left(\frac{w+\gamma z}{2i}\right)^k \nu(\gamma,z)}.
$$ \end{defn}

\begin{thm}\label{thm:bergman} The Bergman kernel satisfies the following properties:
\begin{enumerate}
	\item The sum converges absolutely uniformly on the sets $\{z \in \BH | \epsilon < \arg(z) < \pi - \epsilon \} \times \{w \in \BH | \im w > \epsilon\}$,
	\item $\forall w \in \BH: h(\cdot,w) \in S(\Gamma,k,\nu)$,
	\item $\forall f \in S(\Gamma,k,\nu)$: $$ \langle f, h(\cdot,\overline{-w})\rangle = \frac{2}{\mu(\Gamma)} \cdot \frac{4\pi}{k-1} \cdot f(w). $$
\end{enumerate}
\end{thm}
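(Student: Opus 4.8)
My plan is to establish the three assertions in turn, resting the whole argument on one elementary identity. Writing $d(\cdot,\cdot)$ for the hyperbolic distance on $\BH$, one has $|u-\bar v|^2 = 4\Im(u)\Im(v)\cosh^2(\tfrac12 d(u,v))$ for all $u,v\in\BH$ (both sides equal $|u-v|^2+4\Im(u)\Im(v)$). Applying this with $u=\gamma z$ and $v=-\bar w$, and using $|\nu(\gamma,z)|=|j(\gamma,z)|^k=(\Im(z)/\Im(\gamma z))^{k/2}$, one sees that the $\gamma$-th summand of $h(z,w)$ has modulus exactly $\big((\Im(z)\Im(w))^{k/2}\cosh^k(\tfrac12 d(\gamma z,-\bar w))\big)^{-1}$. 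Since $-I\in\Gamma$ the $\gamma$- and $(-\gamma)$-summands coincide, so it suffices to estimate $\sum_{\gamma\in\hat\Gamma}\cosh^{-k}(\tfrac12 d(\gamma z,-\bar w))$; by the standard lattice-point bound $\#\{\gamma\in\hat\Gamma : d(\gamma z,-\bar w)\le R\}\ll_{\hat\Gamma} e^{R}$ (uniform in $z,w$) and summation by parts, this is $\ll_{k,\Gamma}\int_0^\infty e^{(1-k/2)R}\,dR<\infty$ for $k>2$, uniformly. Hence $\sum_{\gamma\in\Gamma}|(\tfrac{w+\gamma z}{2i})^{-k}\nu(\gamma,z)^{-1}| \ll_{k,\Gamma}(\Im(z)\Im(w))^{-k/2}$, which gives absolute convergence that is uniform on compact subsets of $\BH^2$ (hence on compact subsets of each set in~(1)), and so holomorphy of $h$ on $\BH^2$ by Weierstrass.

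For~(2), reindexing the (absolutely convergent) sum by $\gamma\mapsto\gamma\delta$ ($\delta\in\Gamma$) and using the cocycle relation $\nu(\gamma\delta^{-1},\delta z)\nu(\delta,z)=\nu(\gamma,z)$ gives $h(\delta z,w)=\nu(\delta,z)h(z,w)$; with holomorphy this is the transformation law. For the behaviour at a cusp $\tau\infty$ ($\tau\in\SL2(\BZ)$), the bound above applied at $\tau z$ gives, the $\Im(\tau z)$ factors cancelling,
$$
\big|(h(\cdot,w)|_k\tau)(z)\big| = |j(\tau,z)|^{-k}\,|h(\tau z,w)| \ll_{k,\Gamma}(\Im(z)\Im(w))^{-k/2},
$$
so $(h(\cdot,w)|_k\tau)(z)$ is bounded and tends to $0$ as $\Im(z)\to\infty$. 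As $h(\cdot,w)$ is $\Gamma$-modular, $(h(\cdot,w)|_k\tau)$ satisfies the $U^{n_\tau}$-equivariance with multiplier $e^{2\pi i\kappa_\tau}$ defining the cusp parameter; boundedness near $i\infty$ together with $\kappa_\tau<1$ then forces the Laurent expansion of $e^{-2\pi i\kappa_\tau z/n_\tau}(h(\cdot,w)|_k\tau)$ in $q=e^{2\pi i z/n_\tau}$ to have no negative terms (estimate the coefficients on circles $\Im(z)=Y\to\infty$), so $h(\cdot,w)\in M(\Gamma,k,\nu)$; and when $\kappa_\tau=0$ the constant term equals $\lim_{\Im(z)\to\infty}(h(\cdot,w)|_k\tau)(z)=0$. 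Hence $h(\cdot,w)\in S(\Gamma,k,\nu)$.

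For~(3), note first that $-I\in\Gamma$ makes $h(z,v)=2\sum_{\gamma\in\hat\Gamma}\big((\tfrac{v+\gamma z}{2i})^{k}\nu(\gamma,z)\big)^{-1}$, which is the origin of the factor $2$. Take $v=\overline{-w}=-\bar w$ and unfold the Petersson integral: interchanging $\sum_{\hat\Gamma}$ with $\int_{\BF_\Gamma}$ (justified by Tonelli — the majorant, unfolded by $z\mapsto\gamma^{-1}u$, reduces to $M\int_\BH\cosh^{-k}(\tfrac12 d(u,w))\,d\mu(u)<\infty$, using the cusp-form bound $|f(u)|\le M\Im(u)^{-k/2}$) and then substituting $z=\gamma^{-1}u$ in the $\gamma$-th term, the automorphy factors cancel identically: $f(\gamma^{-1}u)=\nu(\gamma^{-1},u)f(u)$ and $\overline{\nu(\gamma,z)^{-1}}=\overline{\nu(\gamma^{-1},u)}$ produce $|\nu(\gamma^{-1},u)|^{2}=|j(\gamma^{-1},u)|^{2k}$, which is exactly cancelled by the powers of $|j(\gamma^{-1},u)|$ coming from $\Im(\gamma^{-1}u)^{k-2}$ and the Jacobian, leaving $\int_{\gamma\BF_\Gamma}f(u)\,\overline{(\tfrac{u-\bar w}{2i})^{-k}}\,\Im(u)^{k-2}\,d\xi\,d\eta$. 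Since $\tfrac{u-\bar w}{2i}$ has positive real part, $\overline{(\tfrac{u-\bar w}{2i})^{-k}}=(\tfrac{w-\bar u}{2i})^{-k}$; summing over $\hat\Gamma$ and using $\bigsqcup_{\gamma\in\hat\Gamma}\gamma\BF_\Gamma=\BH$ (up to measure zero) yields
$$
\langle f,h(\cdot,\overline{-w})\rangle_\Gamma = \frac{2}{\mu(\Gamma)}\int_\BH f(u)\Big(\tfrac{w-\bar u}{2i}\Big)^{-k}\Im(u)^{k-2}\,d\xi\,d\eta.
$$
Finally, the conformal map $u\mapsto(u-w)/(u-\bar w)$ of $\BH$ onto $\BD$ (sending $w\mapsto0$) turns this integral into $\tfrac{2}{\mu(\Gamma)}\cdot 4\int_\BD F(\zeta)(1-|\zeta|^2)^{k-2}\,dA$ with $F$ holomorphic on $\BD$ and $F(0)=f(w)$, and $\int_\BD F(\zeta)(1-|\zeta|^2)^{k-2}\,dA=\tfrac{\pi}{k-1}F(0)$ (integrate over $|\zeta|\le\rho$, where only the constant Taylor coefficient of $F$ survives the angular integration, then let $\rho\uparrow1$; the $L^1$-integrability required is exactly the convergence just established). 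Collecting constants gives $\langle f,h(\cdot,\overline{-w})\rangle_\Gamma=\tfrac{2}{\mu(\Gamma)}\cdot\tfrac{4\pi}{k-1}f(w)$.

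\textbf{Expected main obstacle.} The structure — distance identity, reindexing, unfolding, conformal transplant — is routine; the real care goes into the bookkeeping with the branch $(\cdot)^k=\exp(k\log(\cdot))$. In particular one must invoke the defining property $\nu(-\gamma,z)=\nu(\gamma,z)$ rather than $j(-\gamma,z)^k$ (which differs from $j(\gamma,z)^k$ in general — equality holds only for even integral $k$), and one must justify $\overline{(\tfrac{u-\bar w}{2i})^{-k}}=(\tfrac{w-\bar u}{2i})^{-k}$, where the positive-real-part observation is used. Pinning down the constant $\tfrac{4\pi}{k-1}$ — i.e.\ tracking the Jacobian factor $4$ against the Beta-integral $\int_\BD(1-|\zeta|^2)^{k-2}\,dA=\tfrac{\pi}{k-1}$ — together with this branch-cut care is, I expect, where the bulk of the work lies.
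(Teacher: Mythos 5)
Your part (3) is sound and is essentially the paper's own argument (unfold over $\pm I$-classes, Cayley transform to the disc, mean-value property; your Tonelli majorant and the branch/conjugation checks are fine). The gap is in your convergence estimate, on which both your version of (1) and your cusp-decay step in (2) rest. The ``standard lattice-point bound'' $\#\{\gamma\in\hat\Gamma : d(\gamma z,-\bar w)\le R\}\ll_{\hat\Gamma}e^{R}$ is \emph{not} uniform in $z,w$ for a non-cocompact group such as a finite index subgroup of $\SL2(\BZ)$: parabolic elements destroy it. Concretely, pick $N$ with $U^{N}\in\Gamma$ and take $z=w=iY$; then $d(U^{Nn}z,-\bar w)\le R$ already for $\gg_{N} Y e^{R/2}$ values of $n$, so the count grows with $Y$. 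Correspondingly your asserted bound
$$
\sum_{\gamma\in\Gamma}\Bigl|\bigl(\tfrac{w+\gamma z}{2i}\bigr)^{-k}\nu(\gamma,z)^{-1}\Bigr|\ \ll_{k,\Gamma}\ (\Im z\,\Im w)^{-k/2}
$$
is false: for $z=w=iY$ the translation terms alone give $2\sum_{n}(Y^{2}+N^{2}n^{2}/4)^{-k/2}\asymp_{N}Y^{1-k}$, which exceeds $Y^{-k}$ by a factor $\asymp Y$. Because of this, (i) you only obtain uniform convergence on \emph{compact} subsets, whereas item (1) asserts it on the stated non-compact product sets (exactly the regime, $\Im z$ and $\Im w$ both large, where your bound breaks), and (ii) the one-line decay estimate $|(h(\cdot,w)|_k\tau)(z)|\ll(\Im z\,\Im w)^{-k/2}$ used to place $h(\cdot,w)$ in $S(\Gamma,k,\nu)$ is derived from the false uniform bound.

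Both defects are repairable, and the paper's route shows the cheapest fix: since $\Im(w+\gamma z)\ge\Im w$, one has $|\tfrac{w+\gamma z}{2i}|^{-k}\le(\tfrac{\Im w}{2})^{-k}$, so the series is majorized by $(\tfrac{\Im w}{2})^{-k}\sum_{\gamma\in\SL2(\BZ)}|j(\gamma,z)|^{-k}$, a classical Eisenstein-type series converging uniformly on the sector $\epsilon<\arg z<\pi-\epsilon$; this gives (1) in full strength and justifies interchanging $\lim_{\Im z\to\infty}$ with the sum to get vanishing at every cusp (the paper splits into $c=0$ and $c\neq0$ for the termwise limits). Alternatively, your hyperbolic-distance argument can be salvaged for (2) by fixing $w$: the orbit $\Gamma\cdot(-\bar w)$ is uniformly separated by some $\delta_w>0$, whence $\#\{\gamma: d(\gamma u,-\bar w)\le R\}\ll_{w,\Gamma}e^{R}$ uniformly in $u$ and $|h(u,w)|\ll_{k,\Gamma,w}(\Im u\,\Im w)^{-k/2}$, which suffices for the decay at cusps (with a $w$-dependent constant); but it does not give item (1) as stated, nor the uniform-in-$(z,w)$ bound you claim.
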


\begin{proof} We have for $\tau \in \SL2(\BZ)$:
$$\begin{aligned}
|(h(\cdot,w)|_k \tau)(z)| &=\left|\frac{1}{j(\tau,z)^k} \sum_{\gamma \in \Gamma} \frac{1}{\left(\frac{w+\gamma \tau z}{2i}\right)^k \nu(\gamma, \tau z)}  \right| \\
&\le \sum_{\gamma \in \Gamma} \frac{1}{\left|\frac{w+\gamma \tau z}{2i}\right|^k |j(\gamma, \tau z)|^k |j(\tau,z)|^k} \\
&= \sum_{\gamma \in \Gamma} \frac{1}{\left|\frac{w+\gamma \tau z}{2i}\right|^k |j(\gamma \tau, z)|^k} \\
&\le \sum_{\gamma \in \SL2(\BZ)} \left|\frac{\im w}{2}\right|^{-k} \frac{1}{|j(\gamma,z)|^k}.
\end{aligned}$$
The latter converges uniformly on the mentioned sets. It follows, that $h(\cdot,w)$ is a holomorphic function on the upper-half plane. Moreover we can exchange limit and summation in the following:
$$\begin{aligned}
\lim_{\im z \to \infty} |(h(\cdot,w)|_k \tau)(z)| &\le \lim_{\im z \to \infty} \sum_{\gamma \in \SL2(\BZ)} \frac{1}{\left|\frac{w+\gamma z}{2i}\right|^k |j(\gamma, z)|^k} \\
&= \sum_{\gamma \in \SL2(\BZ)} \lim_{\im z \to \infty} \frac{1}{\left|\frac{w+\gamma z}{2i}\right|^k |j(\gamma, z)|^k} =0.
\end{aligned}$$
To see the latter we distinguish two cases. Let $\gamma = \begin{pmatrix} a & b\\ c & d \end{pmatrix}$. If $c=0$ then
$$
\lim_{\im z \to \infty} |w+\gamma z| = \lim_{\im z \to \infty} |w+d^{-1}(az+b)| = \infty
$$
and $j(\gamma,z)=d$. If $c \neq 0$, then $|w+\gamma z | \ge \im w$ and
$$
\lim_{\im z \to \infty} |j(\gamma,z)| = \lim_{\im z \to \infty} |cz+d|= \infty.
$$
We also have for $\tau \in \Gamma$:
$$
h(\tau z, w)= \sum_{\gamma \in \Gamma} \frac{1}{\left(\frac{w+\gamma \tau z}{2i}\right)^k \nu(\gamma, \tau z)} = \sum_{\gamma \in \Gamma} \frac{\nu(\tau,z)}{\left(\frac{w+\gamma \tau z}{2i}\right)^k \nu(\gamma \tau, z)} = \nu(\tau,z) h(z,w).
$$
From which the second claim follows. The third claim needs more work. Let $f \in S(\Gamma,k,\nu)$. We have:
$$\begin{aligned}
f(z)\overline{h(z,\overline{-w})}y^k &= \sum_{\gamma \in \Gamma} \frac{f(z)y^k}{\left(\frac{w- \overline{\gamma z}}{2i}\right)^k \overline{\nu(\gamma,z)}} = \sum_{\gamma \in \Gamma} \frac{f(\gamma z)y^k}{\left(\frac{w- \overline{\gamma z}}{2i}\right)^k \overline{\nu(\gamma,z)} \nu(\gamma, z)} \\
&= \sum_{\gamma \in \Gamma} \frac{f(\gamma z)(\im \gamma z)^k}{\left(\frac{w- \overline{\gamma z}}{2i}\right)^k}.
\end{aligned}$$
Plugging this in the definition of the Petersson inner product we find:
$$\begin{aligned}
\langle f, h(\cdot,\overline{-w}) \rangle &= \frac{1}{\mu(\Gamma)} \int_{\BF_{\Gamma}} \sum_{\gamma \in \Gamma} \frac{f(\gamma z)(\im \gamma z)^k}{\left(\frac{w- \overline{\gamma z}}{2i}\right)^k} \frac{dxdy}{y^2} \\
&= \frac{1}{\mu(\Gamma)} \sum_{\gamma \in \Gamma} \int_{\gamma\BF_{\Gamma}}  \frac{f(z)y^k}{\left(\frac{w- \overline{z}}{2i}\right)^k} \frac{dxdy}{y^2} \\
&= \frac{2}{\mu(\Gamma)} \int_{\BH}  \frac{f(z)y^k}{\left(\frac{w- \overline{z}}{2i}\right)^k} \frac{dxdy}{y^2}.
\end{aligned}$$
Using a Cayley transformation $l_w: \BH \to \BD, z \mapsto \zeta = (z-w)/(z-\overline{w})$, which maps the upper-half plane biholomorphic to the unit disk, we will transform the integral. For this we denote $z=x+iy, w=u+iv, \zeta=\xi+i\eta$. We have the following identities:
$$
\frac{dl_w}{dz}=\frac{w-\overline{w}}{(z-\overline{w})^2}=\frac{2v}{(z-\overline{w})^2} \Rightarrow d\xi d\eta= \frac{4v^2}{|z-\overline{w}|^4}dxdy,
$$
$$
1-|\zeta|^2=\frac{|z-\overline{w}|^2-|z-w|^2}{|z-\overline{w}|^2}=\frac{4yv}{|z-\overline{w}|^2}.
$$
Back to the integral we have to calculate:
$$\begin{aligned}
\int_{\BH}  \frac{f(z)y^k}{\left(\frac{w- \overline{z}}{2i}\right)^k} \frac{dxdy}{y^2} &= \int_{\BH}f(z) \left(\frac{w-\overline{z}}{2i}\right)^{-k} \left[\frac{(1-|\zeta|^2)|z-\overline{w}|^2}{4v} \right]^k \left[\frac{4v}{(1-|\zeta|^2)|z-\overline{w}|^2}\right]^2dxdy \\
&= \int_{\BD}f(z) \left(\frac{w-\overline{z}}{2i}\right)^{-k}(1-|\zeta|^2)^k \left|\frac{w-\overline{z}}{2i} \right|^{2k}v^{-k} \frac{4d\xi d\eta}{(1-|\zeta|^2)^2} \\
&= \frac{4}{v^k} \int_{\BD} f(z) \left(\frac{z-\overline{w}}{2i}\right)^k(1-|\zeta|^2)^k \frac{d\xi d\eta}{(1-|\zeta|^2)^2} \\
&= \frac{4}{v^k} \int_{\BD} f^{\dagger}(\zeta)(1-|\zeta|^2)^k \frac{d\xi d\eta}{(1-|\zeta|^2)^2}.
\end{aligned}$$
Where
$$
f^{\dagger}(\zeta)=f(z)\left(\frac{z-\overline{w}}{2i}\right)^k
$$
is holomorphic on $\BD$ and satisfies
$$
\bigl |f^{\dagger}(\zeta)(1-|\zeta|^2)^{\frac{k}{2}}\bigr|= v^{\frac{k}{2}} y^{\frac{k}{2}}|f(z)| \ll_{f,w,k} 1.
$$
By computing the following integral for $0 \le t<1,\alpha>-1$
$$\begin{aligned}
\int_{t\BD} (1-|\zeta|^2)^{\alpha}d\xi d \eta &= \int_0^t \int_0^{2 \pi} (1-r^2)^\alpha r d\phi dr \\
&= -\pi\frac{(1-r^2)^{\alpha+1}}{\alpha+1} \biggr |_{r=0}^t\\
&= \frac{\pi}{\alpha+1}\left(1-(1-t^2)^{\alpha+1}\right)
\end{aligned}$$
we see that the integral left to be computed converges absolutely uniformly for $k>2$. Hence we have:
$$\begin{aligned}
\int_{\BD} f^{\dagger}(\zeta)(1-|\zeta|^2)^k \frac{d\xi d\eta}{(1-|\zeta|^2)^2} &= \lim_{t \to 1^-} \int_{t\BD} f^{\dagger}(\zeta)(1-|\zeta|^2)^k \frac{d\xi d\eta}{(1-|\zeta|^2)^2} \\
&= \lim_{t \to 1^-} \int_{t\BD} \sum_{n=0}^{\infty} \left(f^{\dagger}\right)^{(n)}(0)\frac{\zeta^n}{n!}(1-|\zeta|^2)^k \frac{d\xi d\eta}{(1-|\zeta|^2)^2} \\
&= \lim_{t \to 1^-} \sum_{n=0}^{\infty} \frac{\left(f^{\dagger}\right)^{(n)}(0)}{n!} \int_{t\BD} \zeta^n(1-|\zeta|^2)^k \frac{d\xi d\eta}{(1-|\zeta|^2)^2}.
\end{aligned}$$
As the Taylor expansion converges absolutely uniformly on $t\BD$. Making the substitution $\zeta \mapsto e^{2\pi i s}\zeta$ for some suitable $s\in \BR$ we see that:
$$
\int_{t\BD} \zeta^n(1-|\zeta|^2)^k \frac{d\xi d\eta}{(1-|\zeta|^2)^2} = \begin{cases} \frac{\pi}{k-1}\left(1-(1-t^2)^{k-1}\right), & \text{if } n=0, \\ 0, & \text{if } n>0. \end{cases}
$$
And therefore we have:
$$\begin{aligned}
\int_{\BD} f^{\dagger}(\zeta)(1-|\zeta|^2)^k \frac{d\xi d\eta}{(1-|\zeta|^2)^2} &= f^{\dagger}(0)\cdot \frac{\pi}{k-1} = \frac{\pi}{k-1}f(w)v^k,
\end{aligned}$$
which completes the proof.
\end{proof}

\begin{cor} \label{cor:Bergmanavg} Let $\{f_j\}$ be an orthonormal basis of $S(\Gamma,k,\nu)$ and $\tau \in \SL2(\BZ)$. Then we have:
$$
\sum_{j}|(f_j|_k \tau)(z)|^2 = \mu(\Gamma) \frac{k-1}{8\pi} \sum_{\gamma \in \Gamma^{\tau}} \frac{1}{\left(\frac{\gamma z - \overline{z}}{2i}\right)^k\nu^{\tau}(\gamma,z)}.
$$
\end{cor}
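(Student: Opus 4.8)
The plan is to recognize the left-hand side as the diagonal of the reproducing kernel of the finite-dimensional Hilbert space $S(\Gamma^{\tau},k,\nu^{\tau})$ and to identify that kernel with the Bergman kernel $h$ via part (3) of Theorem \ref{thm:bergman}.

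First I would reduce to the case $\tau=I$. The slash operator $f\mapsto f|_k\tau$ maps $S(\Gamma,k,\nu)$ bijectively onto $S(\Gamma^{\tau},k,\nu^{\tau})$, and it is an isometry for the Petersson product: writing out $(f|_k\tau)(z)\overline{(g|_k\tau)(z)}\,y^k = f(\tau z)\overline{g(\tau z)}(\im \tau z)^k$ (using $\im \tau z = y/|j(\tau,z)|^2$) and using that $\frac{dxdy}{y^2}$ is $\SL2(\BR)$-invariant, that $\tau\BF_{\Gamma^{\tau}}$ is a fundamental domain for $\Gamma$, and that $\mu(\Gamma^{\tau})=\mu(\Gamma)$ (conjugation by $\tau\in\SL2(\BZ)$ preserves the index), one obtains $\langle f|_k\tau,g|_k\tau\rangle_{\Gamma^{\tau}}=\langle f,g\rangle_{\Gamma}$. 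Hence $\{f_j|_k\tau\}$ is an orthonormal basis of $S(\Gamma^{\tau},k,\nu^{\tau})$, and it suffices to prove the identity for $\tau=I$ with an arbitrary finite index subgroup $\Gamma\ni -I$ and automorphy factor $\nu$ of weight $k>2$ in place of $\Gamma^{\tau},\nu^{\tau}$.

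For the case $\tau=I$: since $S(\Gamma,k,\nu)$ is finite-dimensional, point evaluation $f\mapsto f(w)$ is a bounded linear functional, so there is a unique $K_w\in S(\Gamma,k,\nu)$ with $f(w)=\langle f,K_w\rangle_{\Gamma}$ for all $f$. Part (3) of Theorem \ref{thm:bergman} identifies $K_w=\frac{\mu(\Gamma)(k-1)}{8\pi}\,h(\cdot,\overline{-w})$. On the other hand, expanding $K_w$ in the orthonormal basis gives $K_w=\sum_j \langle K_w,f_j\rangle_{\Gamma}f_j=\sum_j \overline{f_j(w)}\,f_j$. Comparing the two expressions and then setting $w=z$ yields $\sum_j |f_j(z)|^2=\frac{\mu(\Gamma)(k-1)}{8\pi}\,h(z,\overline{-z})$, and since $h(z,\overline{-z})=\sum_{\gamma\in\Gamma}\bigl(\frac{\gamma z-\overline{z}}{2i}\bigr)^{-k}\nu(\gamma,z)^{-1}$ directly from the definition of the Bergman kernel, this is exactly the claimed identity (with $\Gamma^{\tau},\nu^{\tau}$ restored).

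There is essentially no analytic obstacle: the sum over $j$ is finite, so exchanging it with point evaluation is trivial, and the absolute convergence of $h(z,\overline{-z})$ for every fixed $z\in\BH$ is covered by part (1) of Theorem \ref{thm:bergman} (choosing $\eps$ small depending on $z$, since $\im\overline{-z}=\im z$). The only points requiring a line of care are the isometry property of the slash operator asserted above and the fact — recorded in the preliminaries — that $\nu^{\tau}$ is genuinely an automorphy factor of weight $k$ on $\Gamma^{\tau}$, so that Theorem \ref{thm:bergman} applies verbatim to the pair $(\Gamma^{\tau},\nu^{\tau})$.
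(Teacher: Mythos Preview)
Your proposal is correct and follows essentially the same route as the paper: both arguments expand the Bergman kernel $h(\cdot,\overline{-w})$ in an orthonormal basis via the reproducing property of Theorem~\ref{thm:bergman}(3), set $w=z$, and handle general $\tau$ by the isometry $\langle f|_k\tau,g|_k\tau\rangle_{\Gamma^{\tau}}=\langle f,g\rangle_{\Gamma}$, which shows $\{f_j|_k\tau\}$ is an orthonormal basis of $S(\Gamma^{\tau},k,\nu^{\tau})$. The only cosmetic difference is the order in which the two steps are presented.
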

\begin{proof} We prove first the case $\tau=I$, which follows easily from the formula:
$$
h(z,\overline{-w})=\sum_j \langle h(\cdot,\overline{-w} ,f_j \rangle f_j(z) = \frac{2}{\mu(\Gamma)}\cdot \frac{4\pi}{k-1}\sum_j \overline{f_j(w)}f_j(z).
$$
For the general case we use the special case and the fact that $\{f_j|_k \tau\}$ is a basis of $S(\Gamma^{\tau},k,\nu^{\tau})$. The orthonormality follows from:
$$\begin{aligned}
\langle f,g \rangle_{\Gamma} &= \frac{1}{\mu(\Gamma)} \int_{\BF_{\Gamma}} f(z)\overline{g(z)}y^k \frac{dxdy}{y^2} \\
&= \frac{1}{\mu(\Gamma)} \int_{\tau^{-1}\BF_{\Gamma}} (f|_k\tau)(z) \overline{(g|_k\tau)(z)}y^k \frac{dxdy}{y^2} \\
&= \langle f|_k\tau, g|_k\tau \rangle_{\Gamma^{\tau}},
\end{aligned}$$
where we used $\mu(\Gamma)=\mu(\Gamma^{\tau})$, the $\SL2(\BZ)$ invariance of the measure $y^{-2}dxdy$ and that $\tau^{-1}\BF_{\Gamma}$ is a fundamental domain for $\Gamma^{\tau}$.
\end{proof}

We will also need uniform results on Bessel functions. Recall that the (modified) Bessel functions are given by
$$\begin{aligned}
J_{\rho}(x) &= \sum_{m=0} \frac{(-1)^m\left(\frac{x}{2}\right)^{2m+\rho}}{\Gamma(m+1)\Gamma(m+\rho+1)}, \\
Y_{\rho}(x) &= \sin(\rho \pi)^{-1} \left[ J_{\rho}(x)\cos(\rho \pi)-J_{-\rho}(x) \right], \\
I_{\rho}(x) &= \sum_{m=0} \frac{\left(\frac{x}{2}\right)^{2m+\rho}}{\Gamma(m+1)\Gamma(m+\rho+1)}, \\
K_{\rho}(x) &= \frac{\pi}{2} \sin(\rho \pi)^{-1} \left[ I_{-\rho}(x)-I_{\rho}(x) \right].
\end{aligned}$$

\begin{prop}\label{thm:Besselslarge} On has for $x \ge C>0$:
$$\begin{aligned}
|J_{\rho}(x)| & \ll_{C,\rho} x^{-\frac{1}{2}}, \\
|Y_{\rho}(x)| & \ll_{C,\rho} x^{-\frac{1}{2}}, \\
|K_{\rho}(x)| & \ll_{C,\rho} x^{-\frac{1}{2}} e^{-x}.
\end{aligned}$$
\end{prop}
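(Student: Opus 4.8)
The plan is to establish the three estimates by elementary, self-contained arguments that are uniform in $x \ge C$, rather than invoking the classical asymptotic expansions of Bessel functions (which, together with continuity of $J_\rho, Y_\rho, K_\rho$ on the compact interval $[C,X]$ for a suitable $X = X(\rho)$, would of course also do the job). I would treat the oscillatory functions $J_\rho, Y_\rho$ together, via a Liouville transformation followed by an energy estimate, and $K_\rho$ separately, via an integral representation.

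For $J_\rho$ and $Y_\rho$: writing $C_\rho$ for either of them, both solve Bessel's equation, and the substitution $u(x) = \sqrt{x}\, C_\rho(x)$ converts it into the perturbed oscillator
$$
u'' + \left(1 - \frac{\rho^2 - \tfrac14}{x^2}\right) u = 0 .
$$
I would then track the energy $E(x) = u(x)^2 + u'(x)^2$. Using the equation, $E'(x) = 2u'(u'' + u) = \frac{2(\rho^2 - 1/4)}{x^2}\, u u'$, so $|E'(x)| \le \frac{|\rho^2 - 1/4|}{x^2}\, E(x)$ by $2|uu'| \le u^2 + (u')^2$. A zero of $E$ would force $u \equiv 0$ by uniqueness for the ODE, so $E > 0$ on $(0,\infty)$, and integrating $\bigl|\tfrac{d}{dx} \log E\bigr| \le |\rho^2 - 1/4|/x^2$ from $C$ to $x$ yields $E(x) \le E(C)\, \exp(|\rho^2 - 1/4|/C)$ for all $x \ge C$. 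Here $E(C)$ is finite and depends only on $C$ and $\rho$ (there being just the two choices $C_\rho \in \{J_\rho, Y_\rho\}$, one takes the larger of the two resulting constants). Hence $|C_\rho(x)|^2 = u(x)^2 / x \le E(x)/x \ll_{C,\rho} x^{-1}$, i.e. $|C_\rho(x)| \ll_{C,\rho} x^{-1/2}$, which is the first two bounds.

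For $K_\rho$: I would use $K_\rho = K_{-\rho}$ to assume $\rho \ge 0$ and start from the integral representation $K_\rho(x) = \int_0^\infty e^{-x \cosh t} \cosh(\rho t)\, dt$, valid for $x > 0$. Since $\cosh t \ge 1 + \tfrac12 t^2$ for all real $t$,
$$
0 \le K_\rho(x) \le e^{-x} \int_0^\infty e^{-x t^2/2} \cosh(\rho t)\, dt = \tfrac12 \sqrt{2\pi}\, e^{\rho^2/(2x)}\, x^{-1/2} e^{-x} \le \tfrac12 \sqrt{2\pi}\, e^{\rho^2/(2C)}\, x^{-1/2} e^{-x},
$$
where the middle integral is evaluated by completing the square and the last inequality uses $x \ge C$; this is the third bound.

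I do not anticipate a genuine obstacle, these being classical facts; the only points needing a little attention are confirming that the constant $E(C)$ above can be absorbed into one depending only on $C$ and $\rho$, and checking the degenerate case $\rho \in \BZ$, where $Y_\rho$ and $K_\rho$ are defined by limiting values but still satisfy, respectively, Bessel's equation and the above integral representation, so that every step goes through unchanged.
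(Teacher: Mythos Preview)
Your proof is correct, but it is worth noting that the paper does not actually give a proof of this proposition at all: it simply refers the reader to Watson's treatise (pages 199 and 202), i.e.\ to the classical asymptotic expansions of Bessel functions. Your argument is genuinely different in that it is self-contained. For $J_\rho$ and $Y_\rho$ you avoid the asymptotic expansions entirely by passing to the Liouville normal form $u'' + (1 - (\rho^2 - \tfrac14)/x^2)u = 0$ and bounding the energy $E = u^2 + (u')^2$ via the Gronwall-type inequality $|E'| \le |\rho^2 - \tfrac14|\,x^{-2} E$; this is clean, makes the constants explicit, and sidesteps any discussion of error terms in the asymptotics. For $K_\rho$ you work directly from the Schl\"afli integral $K_\rho(x) = \int_0^\infty e^{-x\cosh t}\cosh(\rho t)\,dt$ and the elementary bound $\cosh t \ge 1 + t^2/2$, again producing the constant explicitly. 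What the reference buys is brevity and the full asymptotic expansions (which are not needed here); what your approach buys is independence from external sources and, in fact, slightly sharper information, since your constants are written down rather than absorbed into an unspecified $O$-term. The two caveats you flag --- finiteness of $E(C)$ and the case $\rho \in \BZ$ --- are indeed non-issues: $J_\rho, Y_\rho$ are smooth on $(0,\infty)$ for all real $\rho$ (the limiting definition of $Y_n$ still yields a smooth solution of Bessel's equation), and the integral representation of $K_\rho$ holds for all real $\rho$ and $x>0$ without exception.
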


\begin{proof} See \cite{ToBF} page 199, 202.
\end{proof}

\begin{prop}[Langer's formulas]\label{thm:Langer}
The Bessel function admits the following uniform formula for $x>\rho$:
$$
J_{\rho}(x)=w^{-\frac{1}{2}}(w-\arctan w)^{\frac{1}{2}} \left[ \frac{\sqrt{3}}{2}J_{\frac{1}{3}}(z)-\frac{1}{2}Y_{\frac{1}{3}}(z) \right] + O(\rho^{-\frac{4}{3}}),
$$
where
$$
w=\sqrt{\frac{x^2}{\rho^2}-1} \text{ and } z=\rho(w-\arctan(w)).
$$
For $x<\rho$ one has the formula
$$
J_{\rho}(x)=\pi^{-1}w^{-\frac{1}{2}}(\artanh(w)-w)^{\frac{1}{2}}K_{\frac{1}{3}}(z) + O(\rho^{-\frac{4}{3}}),
$$
where
$$
w=\sqrt{1-\frac{x^2}{\rho^2}} \text{ and } z= \rho(\artanh(w)-w).
$$
\end{prop}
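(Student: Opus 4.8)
The plan is to obtain both formulas from the Liouville--Green method for a second order linear ODE with a simple turning point, using cylinder functions of order $\tfrac13$ (equivalently Airy functions) as comparison functions; this is Langer's classical device, carried out in the sharpened form of Olver's turning-point theorem. Set $t = x/\rho$ and $v(t) = (\rho t)^{1/2} J_\rho(\rho t)$, so that Bessel's equation becomes
$$
\frac{d^2 v}{dt^2} = \left( \rho^2\, \frac{1-t^2}{t^2} - \frac{1}{4 t^2} \right) v .
$$
This has a simple turning point at $t = 1$, i.e.\ at $x = \rho$: for $t > 1$ the coefficient is negative and $v$ oscillates, while for $t < 1$ it is positive and $J_\rho(\rho t)$ is the recessive (exponentially small) solution; since $J_\rho(\rho t)$ is likewise the recessive solution of Bessel's equation as $t \to 0^+$, this recessiveness will pin down the correct branch at the end.

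Next I would perform the Liouville transformation adapted to the turning point: define $\zeta = \zeta(t)$ by $\zeta\, (d\zeta/dt)^2 = \dfrac{1-t^2}{t^2}$ together with $\zeta(1) = 0$, which is equivalent to
$$
\tfrac23 \zeta^{3/2} = \int_t^1 \frac{\sqrt{1-s^2}}{s}\, ds = \artanh w - w \ \ (t < 1), \qquad \tfrac23 (-\zeta)^{3/2} = \int_1^t \frac{\sqrt{s^2-1}}{s}\, ds = w - \arctan w \ \ (t > 1),
$$
with $w$ as in the statement, and put $W = (d\zeta/dt)^{1/2} v$. A direct computation shows that $\zeta(t)$ is real-analytic and strictly monotone across $t = 1$ (with $d\zeta/dt \to -2^{1/3}$ there, so the turning-point singularity of $v$ is removed), and that
$$
\frac{d^2 W}{d\zeta^2} = \left( \rho^2 \zeta + \psi(\zeta) \right) W ,
$$
where $\psi$ collects the $-\tfrac{1}{4t^2}$ term and the Schwarzian of $\zeta(t)$; their leading singular parts cancel, so $\psi$ is bounded on the whole $\zeta$-range and tends to $0$ at both ends, ensuring that Olver's error-control function is finite. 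The comparison equation $d^2 W_0/d\zeta^2 = \rho^2 \zeta W_0$ is solved in closed form by $(-\zeta)^{1/2} \mathcal C_{1/3}( \tfrac23 \rho (-\zeta)^{3/2} )$ for $\zeta < 0$, with $\mathcal C_{1/3}$ any cylinder function of order $\tfrac13$, and by $\zeta^{1/2} K_{1/3}( \tfrac23 \rho \zeta^{3/2} )$, $\zeta^{1/2} I_{\pm 1/3}( \tfrac23 \rho \zeta^{3/2} )$ for $\zeta > 0$; since $\tfrac23 \rho (-\zeta)^{3/2} = \rho(w - \arctan w) = z$ and $\tfrac23 \rho \zeta^{3/2} = \rho(\artanh w - w) = z$, these are exactly the functions appearing in the statement.

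I would then apply Olver's turning-point theorem to the $W$-equation: the solution recessive as $\zeta \to +\infty$ equals a constant multiple of $\zeta^{1/2} K_{1/3}( \tfrac23 \rho \zeta^{3/2} )$ plus an error that is $O(\rho^{-1})$ relative to it, uniformly; its analytic continuation to $\zeta < 0$ is the Airy branch, a multiple of $(-\zeta)^{1/2}( J_{1/3}(z) + J_{-1/3}(z) )$, which is also a multiple of $(-\zeta)^{1/2}( \tfrac{\sqrt3}{2} J_{1/3}(z) - \tfrac12 Y_{1/3}(z) )$ because $Y_{1/3} = \sin(\tfrac\pi3)^{-1}[ J_{1/3}\cos(\tfrac\pi3) - J_{-1/3} ]$ gives $\tfrac{\sqrt3}{2} J_{1/3} - \tfrac12 Y_{1/3} = \tfrac1{\sqrt3}( J_{1/3} + J_{-1/3} )$. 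Since $J_\rho(\rho t)$ is precisely the recessive solution as $t \to 0^+$, it corresponds to this branch. Unwinding $v = (d\zeta/dt)^{-1/2} W$ and $J_\rho(x) = (\rho t)^{-1/2} v$ turns $|d\zeta/dt|^{-1/2}(\mp\zeta)^{1/2}$ into the amplitude $w^{-1/2}(w - \arctan w)^{1/2}$ for $x > \rho$ and $w^{-1/2}(\artanh w - w)^{1/2}$ for $x < \rho$, while the remaining absolute constants — in particular the $\pi^{-1}$ in the second formula and the coefficients $\tfrac{\sqrt3}{2}, -\tfrac12$ in the first — are fixed once and for all by the standard normalizations of the Airy and $K_{1/3}$ functions (one may cross-check by letting $z \to \infty$ and recovering the classical Debye asymptotics, using Proposition~\ref{thm:Besselslarge} with order $\tfrac13$ for the needed bounds on $J_{\pm1/3}, Y_{1/3}, K_{1/3}$). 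Finally, the comparison function has magnitude $\asymp \rho^{-1/3}$ near $t = 1$ and is much smaller away from it, so the relative error $O(\rho^{-1})$ becomes the absolute error $O(\rho^{-4/3})$ of the statement.

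The main obstacle is the routine-but-delicate verification underlying the application of Olver's theorem: that $\zeta(t)$ is analytic and monotone at the turning point, that the competing singular parts of $\psi$ cancel so that the error-control integral over all of $(0, \infty)$ is finite (the endpoints $t \to 0^+$ and $t \to \infty$ need separate inspection), and that all absolute constants are tracked correctly through the transformation. Alternatively, the same formulas follow directly from the integral representation $J_\rho(x) = \tfrac1{2\pi} \int_{-\pi}^{\pi} e^{i(x\sin\theta - \rho\theta)}\, d\theta$ by the Chester--Friedman--Ursell method of a uniform saddle-point expansion for the two stationary points $\theta = \pm\arccos(\rho/x)$, which coalesce as $x \downarrow \rho$; this route makes the Airy structure most transparent and yields the $O(\rho^{-4/3})$ error directly.
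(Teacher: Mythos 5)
Your outline is essentially correct, but it is worth saying up front that the paper does not prove this proposition at all: its ``proof'' is a citation to Erd\'elyi et al.\ (\cite{HTF}, pages 30 and 89), where Langer's formulas are stated with exactly this $O(\rho^{-4/3})$ error term. What you have written is a reconstruction of the classical argument that underlies the cited result, namely Langer's turning-point method in Olver's sharpened form: your Liouville substitution is right (with $v=(\rho t)^{1/2}J_\rho(\rho t)$ one indeed gets $v''=\bigl(\rho^2\frac{1-t^2}{t^2}-\frac{1}{4t^2}\bigr)v$), your evaluations $\int_t^1 s^{-1}\sqrt{1-s^2}\,ds=\artanh w-w$ and $\int_1^t s^{-1}\sqrt{s^2-1}\,ds=w-\arctan w$ are correct, the identity $\frac{\sqrt3}{2}J_{1/3}-\frac12 Y_{1/3}=\frac{1}{\sqrt3}\bigl(J_{1/3}+J_{-1/3}\bigr)$ correctly identifies the Airy branch, and the recessiveness of $J_\rho$ at $t\to0^+$ pins down the right solution; matching against the Debye asymptotics (amplitude $\sqrt{2/(\pi\sqrt{x^2-\rho^2})}$ and phase $\sqrt{x^2-\rho^2}-\rho\arccos(\rho/x)-\frac{\pi}{4}$ on the oscillatory side, and the analogous exponentially small expression on the other side) confirms the normalizations $\frac{\sqrt3}{2},-\frac12$ and $\pi^{-1}$. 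The passage from Olver's relative error $O(\rho^{-1})$ to the absolute error $O(\rho^{-4/3})$ via the uniform envelope $O(\rho^{-1/3})$ of the comparison function is also the standard deduction. The parts you flag as ``routine-but-delicate'' are exactly where the real work sits — analyticity and monotonicity of $\zeta(t)$ through $t=1$, boundedness and integrability of the error-control function as $t\to0^+$ and $t\to\infty$, and tracking the constants — and a complete write-up would either carry these out or, as the paper does, simply defer to the literature; your Chester--Friedman--Ursell alternative via the integral representation is an equally legitimate route to the same statement.
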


\begin{proof} See \cite{HTF} page 30, 89.
\end{proof}

\begin{prop}\label{thm:JBesselmedasym}
For the intermediate range $|x-\rho| = o( \rho^{\frac{1}{3}})$ we have the following asymptotic for every $M$:
$$
J_{\rho}(x) = \frac{1}{3\pi} \sum_{m=0}^{M-1} B_m(x-\rho) \sin \left( \frac{\pi}{3}(m+1) \right) \frac{\Gamma \left(\frac{1}{3}(m+1)\right)}{\left( \frac{x}{6} \right)^{\frac{1}{3}(m+1)}} + O \left(x^{-\frac{M+1}{3}} \right).
$$

\end{prop}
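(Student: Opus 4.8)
The plan is to read off the expansion from the Schl\"afli loop representation
\[
J_{\rho}(x)=\frac{1}{2\pi i}\int_{\mathcal C}e^{x\sinh w-\rho w}\,dw ,
\]
where $\mathcal C$ runs from $+\infty-i\pi$ leftwards around $\Re w=-\infty$ and back to $+\infty+i\pi$, by the method of steepest descent at a coalescing pair of saddle points. Writing $t:=x-\rho$, so that $|t|=o(\rho^{1/3})$ and $x\asymp\rho$, the saddles of $x\sinh w-\rho w$ solve $\cosh w=\rho/x$; they sit at $w=\pm i\arccos(\rho/x)$ when $x>\rho$ and at $w=\pm\operatorname{arccosh}(\rho/x)$ when $x<\rho$, and in either case lie within $o(\rho^{-1/3})$ of the origin, i.e.\ deep inside the cubic transition window $|w|\asymp x^{-1/3}$. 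Near $w=0$ one has
\[
x\sinh w-\rho w=t\,w+\frac{x}{6}w^{3}+x\sum_{k\ge 2}\frac{w^{2k+1}}{(2k+1)!},
\]
and the arms of $\mathcal C$ contribute only an amount that is exponentially small in $x$, so the entire asymptotics is governed by a neighbourhood of $0$. This representation is preferable to the real Bessel integral $\frac1\pi\int_0^\pi\cos(x\sin\theta-\rho\theta)\,d\theta-\frac{\sin\rho\pi}{\pi}\int_0^\infty e^{-x\sinh t-\rho t}\,dt$, whose splitting at $\theta=\pi$ would force one to track, and eventually cancel, a spurious term carrying the factor $\sin\rho\pi$.

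Next I would deform $\mathcal C$ through $w=0$ along the steepest descent rays $\arg w=\pm\pi/3$ of the cubic term and rescale $w=(6/x)^{1/3}u$, which sends $\frac{x}{6}w^{3}$ to $u^{3}$ and the remaining monomials to $c_{k}x^{-2(k-1)/3}u^{2k+1}$ for explicit constants $c_{k}$, giving
\[
J_{\rho}(x)=\frac{(6/x)^{1/3}}{2\pi i}\int_{\mathcal A}e^{u^{3}}\exp\!\Bigl(t(6/x)^{1/3}u+\textstyle\sum_{k\ge 2}c_{k}x^{-2(k-1)/3}u^{2k+1}\Bigr)du+O(e^{-cx}),
\]
with $\mathcal A$ the Airy contour from $\infty e^{-i\pi/3}$ to $\infty e^{i\pi/3}$ and $c>0$ absolute. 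Expanding the non-cubic exponential as a power series $\sum_{n}a_{n}(t,x)u^{n}$, whose coefficients are polynomials in $t$ times nonpositive powers of $x^{1/3}$, and integrating termwise against $e^{u^{3}}$ via the elementary evaluation
\[
\frac{1}{2\pi i}\int_{\mathcal A}u^{n}e^{u^{3}}\,du=\frac{1}{3\pi}\,\Gamma\!\Bigl(\frac{n+1}{3}\Bigr)\sin\!\Bigl(\frac{\pi(n+1)}{3}\Bigr)
\]
(collapse $\mathcal A$ onto the two rays and substitute $v=|u|^{3}$) yields a double series; regrouping it by the sizes $(x/6)^{-(m+1)/3}$ — using $\Gamma(\tfrac{m+1}{3}+l)$ and $\sin(\tfrac{\pi(m+1)}{3}+\pi l)$ to merge the contributions of $n=m,m+3,m+6,\dots$ — turns it into exactly $\frac{1}{3\pi}\sum_{m}B_{m}(x-\rho)\sin(\tfrac{\pi(m+1)}{3})\Gamma(\tfrac{m+1}{3})(x/6)^{-(m+1)/3}$, which simultaneously identifies $B_{m}$ as an explicit polynomial of degree $m$ in $x-\rho$ (with $B_{0}\equiv 1$) and fixes the constant $\tfrac1{3\pi}$.

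The step demanding real care, and the one I expect to be the main obstacle, is the uniform remainder estimate. One must bound (i) the error from replacing $\mathcal C$ by its local model and discarding the arms, which is exponentially small once the steepest descent contour is chosen correctly, and, more delicately, (ii) the error from truncating the power series of the non-cubic exponential after the terms needed for $m<M$. For (ii) one invokes Taylor's theorem with integral remainder and then integrates by parts against $e^{u^{3}}$ to recover the oscillatory decay, and it is precisely here that $|x-\rho|=o(\rho^{1/3})$ is used: it keeps $t(6/x)^{1/3}u=o(1)$ on the bulk $|u|=O(1)$ of the integral, so that the remainder sits at the order of the first omitted term $|B_{M}(x-\rho)|\,x^{-(M+1)/3}$, which is $O_{M}(x^{-(M+1)/3})$ in the relevant range (for $|x-\rho|$ bounded this is the stated form, and for $|x-\rho|=o(\rho^{1/3})$ it is still $o(\rho^{-1/3})$ uniformly). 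Since all of this is the classical transition-region expansion of Bessel functions, in practice I would instead quote it from Watson's treatise or from Olver's treatment of asymptotics near a coalescing pair of saddle points.
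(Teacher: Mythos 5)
Your proposal is correct and is essentially the paper's approach: the paper proves this proposition simply by citing Watson \cite{ToBF}, pp.\ 245--247, and the derivation given there is exactly the steepest-descent analysis of the Schl\"afli contour integral at the coalescing saddles near $w=0$ that you sketch, with the same moment evaluation producing the factors $\Gamma\bigl(\tfrac{m+1}{3}\bigr)\sin\bigl(\tfrac{\pi(m+1)}{3}\bigr)$. Your closing remark that one would in practice quote Watson (or Olver) is precisely what the paper does.
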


\begin{proof} See \cite{ToBF} page 245-247.
\end{proof}

The proof given there is also enough to show the following proposition.

\begin{prop} \label{prop:JBesselmed} For $|x-\rho| \le C \rho^{\frac{1}{3}}, \rho \gg_C 1$ we have the following:
$$
|J_{\rho}(x)| \ll_{C} \rho^{-\frac{1}{3}}.
$$
\end{prop}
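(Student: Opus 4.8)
\emph{Proof proposal.} Following the remark just made, the plan is to run the argument behind Proposition~\ref{thm:JBesselmedasym} — the one carried out in \cite{ToBF}, pp.~245--247 — essentially unchanged, the only modification being to track the dependence of the implied constants on $C$. Concretely, one fixes a small integer $M$ (say $M=2$ suffices) and applies the expansion
$$
J_{\rho}(x) = \frac{1}{3\pi}\sum_{m=0}^{M-1} B_m(x-\rho)\,\sin\!\Bigl(\tfrac{\pi}{3}(m+1)\Bigr)\,\frac{\Gamma\!\bigl(\tfrac{m+1}{3}\bigr)}{\bigl(\tfrac{x}{6}\bigr)^{(m+1)/3}} \;+\; R_M(x),
$$
observing that the proof in \cite{ToBF} yields a remainder $R_M(x)$ which, on the enlarged range $|x-\rho|\le C\rho^{1/3}$, is still bounded (now by a constant depending on $C$ and the fixed $M$) by the size of the first omitted term.

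Granting this, the statement follows by two elementary observations. First, $|x-\rho|\le C\rho^{1/3}$ together with $\rho\gg_C 1$ forces $x\asymp_C\rho$, so $\bigl(\tfrac{x}{6}\bigr)^{-(m+1)/3}\asymp_C\rho^{-(m+1)/3}$. Second, $B_m$ is a polynomial of degree at most $m$, with $B_0$ a constant; hence on our range $|B_m(x-\rho)|\ll_{C,m}(1+|x-\rho|)^m\ll_C\rho^{m/3}$, and therefore each summand is $\ll_C\rho^{m/3}\cdot\rho^{-(m+1)/3}=\rho^{-1/3}$ — the same bound controlling the first omitted term and hence $R_M(x)$. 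Summing the $O(1)$ contributions gives $|J_{\rho}(x)|\ll_C\rho^{-1/3}$.

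The only point that is not pure bookkeeping is the claim that the remainder estimates of \cite{ToBF}, established there for $|x-\rho|=o(\rho^{1/3})$, survive — with $C$-dependent constants — throughout $|x-\rho|\le C\rho^{1/3}$; this is what I would check carefully, and it is most transparent from the Schläfli representation
$$
J_{\rho}(x) = \frac{1}{2\pi}\int_{-\pi}^{\pi} e^{\,i(x\sin\theta-\rho\theta)}\,d\theta \;-\; \frac{\sin(\rho\pi)}{\pi}\int_{0}^{\infty} e^{-x\sinh t-\rho t}\,dt.
$$
One localizes the first integral to $|\theta|\le\rho^{-1/3}\log\rho$ (on the complementary range the phase is non-stationary — the stationary point, when real, lies at $\arccos(\rho/x)\asymp_C\rho^{-1/3}$, inside the localization — and $|x\cos\theta-\rho|$ grows, so repeated integration by parts gives a contribution that is $o(\rho^{-1/3})$), writes $x\sin\theta-\rho\theta=(x-\rho)\theta-\tfrac{x}{6}\theta^{3}+O(x\theta^{5})$, and substitutes $\theta=x^{-1/3}u$; the main integral turns into $x^{-1/3}$ times $\int e^{\,i(\alpha u-u^{3}/6)}\,du$ over an effectively complete range, with coalescing-saddle parameter $\alpha=(x-\rho)x^{-1/3}$, and the hypothesis $|x-\rho|\le C\rho^{1/3}$ is exactly $|\alpha|\ll_C 1$. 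Thus this term is $x^{-1/3}$ times a constant multiple of an Airy function evaluated at a point of size $\ll_C 1$, hence $\ll_C x^{-1/3}\asymp_C\rho^{-1/3}$; the second integral occurs only for $\rho\notin\BZ$ and is exponentially small in $\rho$. The main obstacle, then, is purely the care needed to confirm that the cubic truncation of the phase is legitimate out to $|\theta|\asymp\rho^{-1/3}$ and that $\alpha$ remains confined to a compact set — both of which are guaranteed precisely by $|x-\rho|\le C\rho^{1/3}$.
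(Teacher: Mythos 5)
Your proposal is correct and follows essentially the same route as the paper, which proves Proposition \ref{prop:JBesselmed} simply by noting that the argument of \cite{ToBF} (pp.~245--247) behind Proposition \ref{thm:JBesselmedasym} extends to the range $|x-\rho|\le C\rho^{\frac{1}{3}}$ with $C$-dependent constants; your Schl\"afli-integral analysis is in substance that same argument, written out with the uniformity in $C$ made explicit. One small slip worth noting: the second Schl\"afli integral is only $O(1/\rho)$ for $\rho\notin\BZ$ (its integrand is $e^{-x\sinh t-\rho t}\approx e^{-(x+\rho)t}$ near $t=0$), not exponentially small, but this is still $o(\rho^{-\frac{1}{3}})$ and affects nothing.
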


%\begin{thm}
%One has the asymptotic:
%$$
%J_{\rho}(\rho) \sim \frac{\Gamma\left(\frac{1}{3}\right)}{2^{\frac{2}{3}}3^{\frac{1}{6}}\pi \rho^{\frac{1}{3}}}.
%$$
%\end{thm}
%\begin{proof}
%See \cite{ToBF} page 231.
%\end{proof}

\begin{prop}\label{prop:JBesselverysmall}
One has for $\rho \ge 2 x^2$:
$$
|J_{\rho}(x)| \ll \frac{\left(\frac{x}{2} \right)^{\rho}}{\Gamma(\rho+1)}.
$$
\end{prop}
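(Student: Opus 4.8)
The plan is to read the estimate straight off the defining power series
$$
J_{\rho}(x)=\sum_{m=0}^{\infty}\frac{(-1)^m\left(\frac{x}{2}\right)^{2m+\rho}}{\Gamma(m+1)\Gamma(m+\rho+1)},
$$
which converges absolutely for every $x\ge 0$, so that one may isolate the $m=0$ term and write
$$
J_{\rho}(x)=\frac{\left(\frac{x}{2}\right)^{\rho}}{\Gamma(\rho+1)}\sum_{m=0}^{\infty}(-1)^m\frac{\left(\frac{x^2}{4}\right)^m}{m!}\cdot\frac{\Gamma(\rho+1)}{\Gamma(m+\rho+1)}.
$$
It then suffices to show that the bracketed sum is bounded by an absolute constant.

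To this end I would estimate the two $m$-dependent factors separately. First, $\frac{\Gamma(\rho+1)}{\Gamma(m+\rho+1)}=\prod_{j=1}^{m}(\rho+j)^{-1}\le\rho^{-m}$ for $m\ge 1$ (and equals $1$ for $m=0$), using that $\rho>0$ — which holds since $\rho\ge 2x^2$, the degenerate case $x=0$ being trivial (both sides vanish when $\rho>0$, and both equal $1$ when $\rho=0$). Second, $m!\ge 1$. Hence the $m$-th summand of the bracket has absolute value at most $\left(\frac{x^2}{4\rho}\right)^m$. Now the hypothesis $\rho\ge 2x^2$ gives $\frac{x^2}{4\rho}\le\frac{1}{8}$, so the bracket is at most $\sum_{m\ge 0}8^{-m}=\frac{8}{7}$ in absolute value, and therefore
$$
|J_{\rho}(x)|\le\frac{8}{7}\cdot\frac{\left(\frac{x}{2}\right)^{\rho}}{\Gamma(\rho+1)}\ll\frac{\left(\frac{x}{2}\right)^{\rho}}{\Gamma(\rho+1)},
$$
as claimed.

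There is no genuine obstacle here; the only points requiring a little care are the absolute convergence that justifies pulling out the leading term and reindexing, and the elementary bound $\prod_{j=1}^{m}(\rho+j)^{-1}\le\rho^{-m}$ that converts the tail into a geometric series via $\rho\ge 2x^2$. (One could alternatively dispense with the hypothesis $\rho\ge 2x^2$ altogether by invoking Poisson's integral representation $J_{\rho}(x)=\frac{(x/2)^{\rho}}{\sqrt{\pi}\,\Gamma(\rho+\frac{1}{2})}\int_{-1}^{1}(1-t^2)^{\rho-\frac{1}{2}}\cos(xt)\,dt$, valid for $\rho>-\frac{1}{2}$, and estimating the integrand trivially; but the series computation above is more self-contained given that the series for $J_{k-1}$ has already been written out in the text.)
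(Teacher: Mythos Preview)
Your proof is correct and follows the same strategy as the paper: factor out the $m=0$ term of the defining series and bound the remaining sum by a geometric series using $\rho\ge 2x^2$. Your execution is in fact a bit cleaner than the paper's, which invokes Stirling's approximation to estimate the ratio $\Gamma(\rho+1)/\bigl(\Gamma(m+1)\Gamma(m+\rho+1)\bigr)$, whereas you use the exact identity $\Gamma(\rho+1)/\Gamma(m+\rho+1)=\prod_{j=1}^{m}(\rho+j)^{-1}\le\rho^{-m}$ directly and thereby avoid Stirling altogether.
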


\begin{proof} Using Stirling approximation for the $\Gamma$-function one checks that:
$$\begin{aligned}
|J_{\rho}(x) |&= \left|\sum_{m=0} \frac{(-1)^m\left(\frac{x}{2}\right)^{2m+\rho}}{\Gamma(m+1)\Gamma(m+\rho+1)} \right | \\
& \ll \frac{\left(\frac{x}{2} \right)^{\rho}}{\Gamma(\rho+1)} \sum_{m=0}^{\infty} \frac{(\rho+1)^{\rho+\frac{1}{2}}}{(m+1)^{m+\frac{1}{2}}(\rho+m+1)^{\rho+m+\frac{1}{2}}e^{-2m}} \left( \frac{x}{2} \right)^{2m} \\
& \ll \frac{\left(\frac{x}{2} \right)^{\rho}}{\Gamma(\rho+1)} \sum_{m=0}^{\infty} \left( \frac{\rho+1}{\rho+m+1} \right)^{\rho} \left( \frac{x^2 e^2}{4(m+1)(\rho+m+1)} \right)^m \\
& \ll \frac{\left(\frac{x}{2} \right)^{\rho}}{\Gamma(\rho+1)} \sum_{m=0}^{\infty} \left( \frac{x^2 e^2}{4(\rho+1)} \right)^m \\
& \ll \frac{\left(\frac{x}{2} \right)^{\rho}}{\Gamma(\rho+1)}.
\end{aligned}$$
\end{proof}

\begin{prop} \label{prop:JBesselsmall} There exists $C'>0$ for which we have in the range $x \le \rho- C' \rho^{\frac{1}{3}}(\log \rho)^{\frac{1}{3}}, \rho \gg_{C'} 1$ the following estimation:
	$$
	|J_{\rho}(x)| \ll_{C'} \rho^{-\frac{4}{3}}.
	$$
\end{prop}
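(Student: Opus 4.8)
The plan is to evaluate $J_\rho(x)$ through Langer's formula (Proposition~\ref{thm:Langer}) in the regime $x<\rho$, which holds throughout the stated range once $\rho\gg_{C'}1$. Put $w=\sqrt{1-x^2/\rho^2}$ and $z=\rho(\artanh(w)-w)$, so Langer gives $J_\rho(x)=\pi^{-1}w^{-1/2}(\artanh(w)-w)^{1/2}K_{1/3}(z)+O(\rho^{-4/3})$. Since the error is already of the size we are after, the whole problem reduces to bounding the main term by $\ll_{C'}\rho^{-4/3}$, and the only analytic input needed for that is the exponential decay of $K_{1/3}$ at infinity.

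First I would tidy the prefactor: since $z=\rho(\artanh(w)-w)$, one has $w^{-1/2}(\artanh(w)-w)^{1/2}=\big((\artanh(w)-w)/w\big)^{1/2}=\big(z/(\rho w)\big)^{1/2}$, so the main term equals $\pi^{-1}(z/(\rho w))^{1/2}K_{1/3}(z)$. Next I would record two elementary bounds. From $w^2=(1-x/\rho)(1+x/\rho)\ge(\rho-x)/\rho$ and $\artanh(w)-w=\sum_{n\ge1}w^{2n+1}/(2n+1)\ge w^3/3$ one gets
$$
z\ \ge\ \frac{\rho w^3}{3}\ \ge\ \frac{(\rho-x)^{3/2}}{3\sqrt\rho},
$$
which tends to infinity in the stated range, so Proposition~\ref{thm:Besselslarge} applies and yields $|K_{1/3}(z)|\ll z^{-1/2}e^{-z}$; hence the main term is $\ll(\rho w)^{-1/2}e^{-z}$. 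From the same inequality $w\ge\sqrt{(\rho-x)/\rho}$ we get $\rho w\ge\sqrt{\rho(\rho-x)}\gg_{C'}\rho^{2/3}$, so $(\rho w)^{-1/2}\ll_{C'}\rho^{-1/3}$ and therefore the main term is $\ll_{C'}\rho^{-1/3}e^{-z}$.

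To finish, note that — with $C'$ chosen large enough — the range hypothesis forces, via the displayed bound $z\ge(\rho-x)^{3/2}/(3\sqrt\rho)$, that $z\ge\log\rho$; hence $e^{-z}\le\rho^{-1}$ and the main term is $\ll_{C'}\rho^{-1/3}\cdot\rho^{-1}=\rho^{-4/3}$, which together with Langer's error term gives $|J_\rho(x)|\ll_{C'}\rho^{-4/3}$. The crux of the argument — and the reason a power of $\log\rho$ must appear in the width of the window excluded around $x=\rho$ — is precisely this last step: the exponential decay of $K_{1/3}$ has to be converted into a genuine power saving in $\rho$, yet the gap $\rho-x$ feeds into $z$ only through $(\rho-x)^{3/2}/\sqrt\rho$, so the size of the window has to carry both a power of $\rho$ and a power of $\log\rho$; getting that calibration sharp (and checking $z$ is large enough for Proposition~\ref{thm:Besselslarge}) is the only real work. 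Two routine clean-ups remain: the statement is to be read for $x>0$ (where $J_\rho$ is unambiguous for real order), and on the thin part of the range where $x$ is near $0$ — so that $w\to1$ and $z\to\infty$ — the bound $\ll_{C'}\rho^{-1/3}e^{-z}$ is already super-polynomially small, so nothing more is needed there, or one may simply quote Proposition~\ref{prop:JBesselverysmall}.
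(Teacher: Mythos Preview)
Your argument follows the paper's proof essentially line for line: Langer's formula for $x<\rho$, reduction of the main term to $(\rho w)^{-1/2}e^{-z}$ via the exponential decay of $K_{1/3}$, then the claim $z\ge\log\rho$ to convert $e^{-z}$ into $\rho^{-1}$. The extra bookkeeping you add (rewriting the prefactor as $(z/(\rho w))^{1/2}$, and the explicit chain $z\ge\rho w^3/3\ge(\rho-x)^{3/2}/(3\sqrt\rho)$) is welcome and makes the structure clearer.

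There is, however, one arithmetic slip---shared with the paper's own proof---in the step ``$z\ge\log\rho$ for $C'$ large enough''. Plugging the boundary value $\rho-x=C'\rho^{1/3}(\log\rho)^{1/3}$ into your displayed lower bound gives only
\[
z\ \ge\ \frac{(\rho-x)^{3/2}}{3\sqrt\rho}\ =\ \frac{(C')^{3/2}\rho^{1/2}(\log\rho)^{1/2}}{3\sqrt\rho}\ =\ \frac{(C')^{3/2}}{3}(\log\rho)^{1/2},
\]
and no fixed $C'$ makes this $\ge\log\rho$ for all large $\rho$. The remedy is painless: replace the exponent $\tfrac13$ on $\log\rho$ in the hypothesis by $\tfrac23$; then $z\ge\tfrac{(C')^{3/2}}{3}\log\rho\ge\log\rho$ once $C'\ge 3^{2/3}$, and your argument (and the paper's) goes through verbatim. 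This has no effect downstream, since the only place the proposition is invoked uses the much stronger range $x\le\rho-\rho^{\alpha}$ with $\alpha=\tfrac{13}{15}$, where $z$ is a genuine power of $\rho$.
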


\begin{proof} We are going to use Langer's formula (see Proposition \ref{thm:Langer}) for $x<\rho$. There $z=\rho \sum_{n=1}^{\infty} \frac{w^{2n+1}}{2n+1} \ge \log \rho$ for a particular choice of $C'$ and we estimate by using Proposition \ref{thm:Besselslarge}:
$$\begin{aligned}
|J_{\rho}(x)| &= |\pi^{-1} w^{-\frac{1}{2}} (\artanh(w)-w)^{\frac{1}{2}}K_{\frac{1}{3}}(z)| + O(\rho^{-\frac{4}{3}}) \\
& \ll_{C'} (\rho w)^{-\frac{1}{2}} e^{-z} + O(\rho^{-\frac{4}{3}}) \\
& \ll_{C'} \left(2 C' \rho^{\frac{4}{3}}(\log \rho)^{\frac{1}{3}}-C'^2 \rho^{\frac{2}{3}} (\log \rho)^{\frac{2}{3}}  \right)^{-\frac{1}{4}} \rho^{-1} + O(\rho^{-\frac{4}{3}}) \\
& \ll_{C'} \rho^{-\frac{4}{3}}.
\end{aligned}$$

\end{proof}

A similar argument can be applied to get the next proposition.

\begin{prop} \label{prop:JBesselcrapgapsmall} We have for the range $ \rho - C\rho^{\frac{1}{3}}  \ge x \ge \rho- C \rho^{\frac{1}{3}}(\log \rho)^{\frac{1}{3}}, \rho \gg_C 1$ the following estimation:
$$
|J_{\rho}(x)| \ll_{C} \rho^{-\frac{1}{3}}.
$$
\end{prop}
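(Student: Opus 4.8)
The plan is to follow the same strategy as in the proof of Proposition \ref{prop:JBesselsmall}, namely to invoke Langer's uniform formula (Proposition \ref{thm:Langer}) in the regime $x < \rho$ and to track how the relevant quantities depend on $\rho$ in this narrower window. We write
$$
J_{\rho}(x) = \pi^{-1}w^{-\frac{1}{2}}(\artanh(w) - w)^{\frac{1}{2}}K_{\frac{1}{3}}(z) + O(\rho^{-\frac{4}{3}}),
$$
with $w = \sqrt{1 - x^2/\rho^2}$ and $z = \rho(\artanh(w) - w)$, which is legitimate since $x \le \rho - C\rho^{\frac{1}{3}} < \rho$.

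First I would pin down the size of $w$. From $\rho - C\rho^{\frac{1}{3}}(\log\rho)^{\frac{1}{3}} \le x \le \rho - C\rho^{\frac{1}{3}}$ one gets $C\rho^{-\frac{2}{3}} \le 1 - \tfrac{x}{\rho} \le C\rho^{-\frac{2}{3}}(\log\rho)^{\frac{1}{3}}$, and since $w^2 = (1-\tfrac{x}{\rho})(1+\tfrac{x}{\rho}) \asymp 1 - \tfrac{x}{\rho}$, this yields $\rho^{-\frac{1}{3}} \ll_C w \ll_C \rho^{-\frac{1}{3}}(\log\rho)^{\frac{1}{6}}$; in particular $w \to 0$ as $\rho \to \infty$. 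Using $\artanh(w) - w = \tfrac{w^3}{3} + O(w^5) \asymp w^3$ for small $w$, I then obtain $z = \rho(\artanh(w) - w) \asymp_C \rho w^3$, so that $z \gg_C 1$ (and $z \ll_C (\log\rho)^{\frac{1}{2}}$, although only the lower bound will be needed).

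Next, rewriting $(\artanh(w) - w)^{\frac{1}{2}} = \rho^{-\frac{1}{2}}z^{\frac{1}{2}}$, the main term of Langer's formula becomes $\pi^{-1}w^{-\frac{1}{2}}\rho^{-\frac{1}{2}}\,z^{\frac{1}{2}}K_{\frac{1}{3}}(z)$. Since $z$ is bounded below by a positive constant depending only on $C$, Proposition \ref{thm:Besselslarge} gives $|K_{\frac{1}{3}}(z)| \ll_C z^{-\frac{1}{2}}e^{-z}$, hence $z^{\frac{1}{2}}|K_{\frac{1}{3}}(z)| \ll_C e^{-z} \le 1$. Combining this with $w^{-\frac{1}{2}}\rho^{-\frac{1}{2}} \ll_C \rho^{\frac{1}{6}}\rho^{-\frac{1}{2}} = \rho^{-\frac{1}{3}}$ — which is precisely where the lower bound $w \gg_C \rho^{-\frac{1}{3}}$, coming from $x \le \rho - C\rho^{\frac{1}{3}}$, is used — yields $|J_{\rho}(x)| \ll_C \rho^{-\frac{1}{3}} + O(\rho^{-\frac{4}{3}}) \ll_C \rho^{-\frac{1}{3}}$.

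I do not expect a genuine obstacle here: the argument is a routine variant of Proposition \ref{prop:JBesselsmall}, the only difference being that one keeps the factor $z^{\frac{1}{2}}K_{\frac{1}{3}}(z)$ rather than crudely absorbing everything into the exponential. The two points meriting a little care are (i) checking that the condition $x \le \rho - C\rho^{\frac{1}{3}}$ forces $w \gg_C \rho^{-\frac{1}{3}}$, so that $w^{-\frac{1}{2}}\rho^{-\frac{1}{2}}$ contributes exactly the asserted $\rho^{-\frac{1}{3}}$ and nothing larger; and (ii) confirming that $z$ stays bounded below by a positive constant, so that the large-argument estimate for $K_{\frac{1}{3}}$ applies uniformly once $\rho \gg_C 1$. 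Everything else, including the negligibility of the $O(\rho^{-\frac{4}{3}})$ remainder, is immediate.
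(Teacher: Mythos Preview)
Your proof is correct and is precisely the ``similar argument'' the paper alludes to: you apply Langer's formula for $x<\rho$ as in Proposition~\ref{prop:JBesselsmall}, reduce the main term to $(\rho w)^{-1/2}e^{-z}$, and use the lower bound $w \gg_C \rho^{-1/3}$ coming from $x \le \rho - C\rho^{1/3}$ together with $e^{-z}\le 1$ to obtain $\rho^{-1/3}$.
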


\begin{prop} \label{prop:JBessellarge} For $x \ge \rho + C \rho^{\alpha}, \rho \gg_{C,\alpha} 1$ we have:
$$
|J_{\rho}(x)| \ll_C \begin{cases} 
\rho^{-\frac{\alpha+1}{4}}, & \text{for } \frac{1}{3} \le \alpha \le 1, \\
x^{-\frac{1}{2}} \ll_C \rho^{-\frac{\alpha}{2}}, & \text{for } 1 \le \alpha \le \frac{8}{3}, \\
\rho^{-\frac{4}{3}}, & \text{for } \frac{8}{3} \le \alpha.
\end{cases}
$$
\end{prop}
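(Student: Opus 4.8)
The plan is to feed Langer's formula (Proposition~\ref{thm:Langer} in the regime $x>\rho$, which applies since $x\ge\rho+C\rho^{\alpha}>\rho$) into the problem and read the decay off the amplitude, after which only the elementary bounds of Proposition~\ref{thm:Besselslarge} for $J_{1/3},Y_{1/3}$ are needed. Set $w=\sqrt{x^{2}/\rho^{2}-1}$ and $z=\rho(w-\arctan w)$. The observation that makes everything transparent is that the amplitude collapses: since $(w-\arctan w)^{1/2}=(z/\rho)^{1/2}$ and $\rho w=\sqrt{x^{2}-\rho^{2}}$, we have $w^{-1/2}(w-\arctan w)^{1/2}=(\rho w)^{-1/2}z^{1/2}=(x^{2}-\rho^{2})^{-1/4}z^{1/2}$, so Langer's formula reads
$$
J_{\rho}(x)=(x^{2}-\rho^{2})^{-1/4}\left[\frac{\sqrt 3}{2}\,z^{1/2}J_{1/3}(z)-\frac12\,z^{1/2}Y_{1/3}(z)\right]+O(\rho^{-4/3}).
$$

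Next I would show the bracket is $O_{C,\alpha}(1)$. From $x\ge\rho+C\rho^{\alpha}$ one gets $w^{2}=x^{2}/\rho^{2}-1\ge 2C\rho^{\alpha-1}$, hence, using the elementary inequality $w-\arctan w\gg\min(w^{3},1)$, that $z\gg\rho\min(w^{3},1)\ge\min\bigl((2C)^{3/2}\rho^{(3\alpha-1)/2},\rho\bigr)\gg_{C}1$, precisely because $\alpha\ge\tfrac13$. Applying Proposition~\ref{thm:Besselslarge} with the fixed order $\tfrac13$ and threshold this lower bound for $z$, we get $z^{1/2}|J_{1/3}(z)|\ll_{C,\alpha}1$ and $z^{1/2}|Y_{1/3}(z)|\ll_{C,\alpha}1$, so
$$
|J_{\rho}(x)|\ \ll_{C,\alpha}\ (x^{2}-\rho^{2})^{-1/4}+\rho^{-4/3}\ =\ \max\bigl((x^{2}-\rho^{2})^{-1/4},\,\rho^{-4/3}\bigr)\quad(\text{up to the constant}).
$$

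Finally I would estimate $(x^{2}-\rho^{2})^{-1/4}$ in the three ranges. For $\tfrac13\le\alpha\le1$ factor $x^{2}-\rho^{2}=(x-\rho)(x+\rho)\ge C\rho^{\alpha}\cdot\rho$, so $(x^{2}-\rho^{2})^{-1/4}\ll_{C}\rho^{-(\alpha+1)/4}$; since $(\alpha+1)/4\le\tfrac12<\tfrac43$ this beats $\rho^{-4/3}$, giving $\rho^{-(\alpha+1)/4}$. For $\alpha\ge1$ use instead $x\ge\rho+C\rho^{\alpha}\ge(1+C)\rho$, so $x^{2}-\rho^{2}\ge\bigl(1-(1+C)^{-2}\bigr)x^{2}$ and $(x^{2}-\rho^{2})^{-1/4}\ll_{C}x^{-1/2}$; moreover $x\ge C\rho^{\alpha}$ forces $x^{-1/2}\ll_{C}\rho^{-\alpha/2}$, which beats $\rho^{-4/3}$ exactly when $\alpha\le\tfrac83$ and is beaten by it when $\alpha\ge\tfrac83$. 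Collecting the subcases gives the three stated estimates, consistently matched at the boundaries $\alpha=1$ and $\alpha=\tfrac83$.

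The argument is essentially bookkeeping; the two points needing genuine care are (i) checking that $z$ stays bounded away from $0$ before invoking Proposition~\ref{thm:Besselslarge} --- this is exactly where $\alpha\ge\tfrac13$ is used --- and (ii) uniformity for very large $x$: the Langer error $O(\rho^{-4/3})$ is only negligible against $(x^{2}-\rho^{2})^{-1/4}$ for $x\lesssim\rho^{8/3}$, so to get the clean $\ll_{C}x^{-1/2}$ in the middle range in full generality one should either work in the relevant regime $x\asymp\rho^{\alpha}$ or supplement Langer with the standard uniform amplitude bound $J_{\rho}(x)^{2}+Y_{\rho}(x)^{2}\ll(x^{2}-\rho^{2})^{-1/2}$ for $x>\rho$. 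The breakpoints $\alpha=1,\tfrac83$ are forced: at $\alpha=1$ the efficient lower bound for $x^{2}-\rho^{2}$ switches from $(x-\rho)(x+\rho)\asymp\rho^{\alpha+1}$ to $\asymp x^{2}$, and at $\alpha=\tfrac83$ the $\rho^{-4/3}$ term overtakes the amplitude.
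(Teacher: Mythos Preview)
Your argument is essentially identical to the paper's: apply Langer's formula for $x>\rho$, check that $z\gg_{C,\alpha}1$ precisely because $\alpha\ge\tfrac13$, invoke Proposition~\ref{thm:Besselslarge} to bound the bracket, and arrive at $|J_\rho(x)|\ll_C (x^2-\rho^2)^{-1/4}+O(\rho^{-4/3})$, from which the three cases are read off. The paper's write-up is terser (it omits your explicit factorisation $w^{-1/2}(w-\arctan w)^{1/2}=(x^2-\rho^2)^{-1/4}z^{1/2}$ and the case analysis at the end), but the route is the same. Your observation in (ii) about the $x^{-1/2}$ bound in the middle range being contingent on $x\lesssim\rho^{8/3}$ is a fair reading of the statement that the paper does not comment on; in the paper's application only the bound $\rho^{-(\alpha+1)/4}$ with $\alpha=\tfrac{13}{15}$ is actually used, so the point is moot there.
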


\begin{proof} Use Langer's formula (see Proposition \ref{thm:Langer}) for $x>\rho$. And note that for $\alpha \ge \frac{1}{3}$ and $\rho \gg_{C,\alpha} 1$ we have $z \gg 1$. We can thus use Proposition \ref{thm:Besselslarge} to deduce:
$$\begin{aligned}
|J_{\rho}(x)| &= \left| w^{-\frac{1}{2}}(w-\arctan w)^{\frac{1}{2}} \left[ \frac{\sqrt{3}}{2}J_{\frac{1}{3}}(z)-\frac{1}{2}Y_{\frac{1}{3}}(z) \right] \right|  + O(\rho^{-\frac{4}{3}}) \\
& \ll_C (\rho w)^{-\frac{1}{2}} + O(\rho^{-\frac{4}{3}}) \\
& \ll_C (x^2-\rho^2)^{-\frac{1}{4}} + O(\rho^{-\frac{4}{3}}).
\end{aligned}$$
From which the proposition follows.
\end{proof}
\newpage
\section{Proofs of Theorems}
The proofs of Theorem \ref{thm:1} and \ref{thm:2} will be based on the following simple inequality
\begin{equation}
\sup_{z \in \BH} y^{\frac{k}{2}}|f(z)| \le \max_{\tau \in \rquotient{\Gamma}{\SL2(\BZ)}} \sup_{z \in \BF_{I}} \sqrt{y^k\sum_j |(f_j|_k \tau)(z)|^2 },
\label{eq:convexbound}
\end{equation}
where $\BF_I$ is the standard fundamental domain for $\SL2(\BZ)$ and $\{f=f_1,f_2,\dots\}$ is a (finite) orthonormal basis. This inequality is easily seen to be true as $y^{\frac{k}{2}}|f(z)|$ is $\Gamma$ invariant and $\im(\tau z)^{\frac{k}{2}}|f(\tau z)|=y^{\frac{k}{2}}|(f|_k \tau)(z)|$.\\

For the proof of Theorem \ref{thm:2} we will use Corollary \ref{cor:Bergmanavg}. For this purpose it is sufficient to bound the following sum for $z \in \BF_{I}$:
\begin{equation}
\sum_{\gamma \in \Gamma^{\tau}} \frac{y^k}{\left(\frac{\gamma z - \overline{z}}{2i}\right)^k\nu^{\tau}(\gamma,z)} \le \sum_{\gamma \in \SL2(\BZ)} \frac{y^k}{\left|\frac{\gamma z - \overline{z}}{2i}\right|^k|j(\gamma,z)|^k}= \sum_{\gamma \in \SL2(\BZ)} \frac{(yy')^{\frac{k}{2}}}{\left( \left( \frac{x-x'}{2} \right)^2 + \left( \frac{y+y'}{2} \right)^2  \right)^{\frac{k}{2}}},
\label{eq:method2}
\end{equation}
where $x'+iy'=z'=\gamma z$. Our first estimation will be crude and will be used to deal with the cases $y\ll 1$ and $k= 3$, which then allows a good treatment of the sum, when $y$ is relatively small in comparison to $k$. First notice, that by AM-GM we have
$$
\left(\frac{y+y'}{2} \right)^2 \ge yy'.
$$
Thus every term is $\le 1$. For $c>0, (c,d)=1$ fix a matrix $\gamma_{c,d}= \begin{pmatrix} * & * \\ c & d \end{pmatrix}$ with $|\Re(\gamma_{c,d}z-z)|\le \frac{1}{2}$ then we have:
$$\begin{aligned}
\sum_{\gamma \in \SL2(\BZ)} \frac{(yy')^{\frac{k}{2}}}{\left( \left( \frac{x-x'}{2} \right)^2 + \left( \frac{y+y'}{2} \right)^2  \right)^{\frac{k}{2}}} \le& 2 + 2 \sum_{\substack{c>0, \\ (c,d)=1}} \sum_{b \in \BZ} \frac{(yy'')^{\frac{k}{2}}}{\left( \left( \frac{x-x''-b}{2} \right)^2 + \left( \frac{y+y''}{2} \right)^2  \right)^{\frac{k}{2}}}\\
 &+ 2 \sum_{b > 0 }\frac{y^k}{\left( \left(\frac{b}{2}\right)^2+y^2\right)^{\frac{k}{2}}}
\end{aligned}$$
where $x''+iy''=\gamma_{c,d}z$. Recall the assumption $k>2$ for the Bergman kernel. For the last sum we have:
$$\begin{aligned}
\sum_{b>0} \frac{y^k}{\left( \left(\frac{b}{2}\right)^2+y^2\right)^{\frac{k}{2}}} &\le \sum_{0<b<2y+1} 1 + \sum_{b\ge 2y+1} \frac{y^k}{\left( \frac{b}{2}\right)^k} \\ 
& \le 2y+1 + \int_{2y}^{\infty} \frac{2^ky^k}{u^k} du \\
&\ll y.
\end{aligned}$$
For the inner sum of the middle sum we have:
$$\begin{aligned}
\sum_{b \in \BZ} \frac{(yy'')^{\frac{k}{2}}}{\left( \left( \frac{x-x''-b}{2} \right)^2 + \left( \frac{y+y''}{2} \right)^2  \right)^{\frac{k}{2}}} &\le  \sum_{|b| < y+y'' + 2} \frac{(yy'')^{\frac{k}{2}}}{\left( \frac{y+y''}{2} \right)^k} + 2\sum_{b \ge y+y''+1} \frac{(yy'')^{\frac{k}{2}}}{\left(  \frac{b}{2} \right)^k } \\
& \le \left(2(y+3)+1  \right)\frac{(yy'')^{\frac{k}{2}}}{\left(\frac{y+y''}{2} \right)^{k}} +  (yy'')^{\frac{k}{2}} \int_{y+y''}^{\infty} \frac{2^k}{u^k}du \\
& \ll y\frac{(yy'')^{\frac{k}{2}}}{\left(\frac{y+y''}{2} \right)^{k}}.
\end{aligned}$$
Summing this over the outer sum we get:
$$\begin{aligned}
\sum_{\substack{c>0, \\ (c,d)=1}} \frac{(yy'')^{\frac{k}{2}}}{\left(\frac{y+y''}{2} \right)^{k}} &= \sum_{\substack{c>0, \\ (c,d)=1}} \left( \frac{\displaystyle |cz+d|+\frac{1}{|cz+d|}}{2} \right)^{-k} \le  \sum_{c>0} \sum_{d \in \BZ} \left( \frac{\displaystyle |cz+d|+\frac{1}{|cz+d|}}{2} \right)^{-k} \\
& \le \sum_{c=1}^3 \sum_{d \in \BZ} \left( \frac{\displaystyle |cz+d|+\frac{1}{|cz+d|}}{2} \right)^{-k}+\sum_{c\ge 4} \sum_{d \in \BZ} \left( \frac{\displaystyle |cz+d|+\frac{1}{|cz+d|}}{2} \right)^{-k} \\
& \le \sum_{c=1}^3 \sum_{|d-cx|<2y+4 } \left( \frac{\displaystyle |cz+d|+\frac{1}{|cz+d|}}{2} \right)^{-k} +2\sum_{c=1}^3 \sum_{d \ge 2y+3} \left( \frac{d}{2} \right)^{-k} \\
 &\quad+\sum_{c\ge 4} \sum_{|d-cx|< 2cy+3} \left( \frac{cy}{2} \right)^{-k}+2\sum_{c\ge 4} \sum_{d \ge 2cy+1} \left( \frac{d}{2} \right)^{-k}\\ 
& \ll y+\int_{2y+2}^{\infty} \left(\frac{t}{2}\right)^{-k} dt + \int_{3}^{\infty} \left( \frac{sy}{2} \right)^{1-k} ds + \int_{3}^{\infty} \int_{2sy} \left( \frac{t}{2} \right)^{-k} dt ds \\
& \ll y\left(1+\frac{1}{k-2} \right).
\end{aligned}$$
So we have
\begin{equation}
\sum_{\gamma \in \SL2(\BZ)} \frac{(yy')^{\frac{k}{2}}}{\left( \left( \frac{x-x'}{2} \right)^2 + \left( \frac{y+y'}{2} \right)^2  \right)^{\frac{k}{2}}} \ll y\left(1+\frac{1}{k-2}\right).
\label{eq:Bergmantrivial}
\end{equation}
We now assume $y \ge 3, k\ge 6$, then we have:
\begin{equation}\begin{aligned}
\sum_{\gamma \in \SL2(\BZ)} \frac{(yy')^{\frac{k}{2}}}{\left( \left( \frac{x-x'}{2} \right)^2 + \left( \frac{y+y'}{2} \right)^2  \right)^{\frac{k}{2}}} &\le 2+ 8N \frac{y^k}{\left(\frac{1}{4}+y^2\right)^{\frac{k}{2}}} \\
&\quad  +  \sum_{\gamma \in \SL2(\BZ) \backslash \{\pm U^n|\ |n|\le 2N\}} \frac{(yy')^{\frac{k}{2}}}{\left( \left( \frac{x-x'}{2} \right)^2 + \left( \frac{y+y'}{2} \right)^2  \right)^{\frac{k}{2}}} \\
& \ll 1+ N \frac{y^k}{\left(\frac{1}{4}+y^2\right)^{\frac{k}{2}}} \\
&\quad  +  y \sup_{\gamma \in \SL2(\BZ) \backslash \{\pm U^n|\ |n|\le 2N\}} \frac{(yy')^{\frac{k-3}{2}}}{\left( \left( \frac{x-x'}{2} \right)^2 + \left( \frac{y+y'}{2} \right)^2  \right)^{\frac{k-3}{2}}}.
\label{eq:Bergmanreal}
\end{aligned}\end{equation}

We need to estimate this supremum. If $\gamma=\pm U^n$, then $n \ge 2N$ and we have:
$$
\frac{(yy')^{\frac{k-3}{2}}}{\left( \left( \frac{x-x'}{2} \right)^2 + \left( \frac{y+y'}{2} \right)^2  \right)^{\frac{k-3}{2}}} \le \frac{y^{k-3}}{\left( N^2 + y^2  \right)^{\frac{k-3}{2}}}= \left(1+\frac{N^2}{y^2} \right)^{-\frac{k-3}{2}}.
$$
Otherwise we have
$$\begin{aligned}
\frac{(yy')^{\frac{k-3}{2}}}{\left( \left( \frac{x-x'}{2} \right)^2 + \left( \frac{y+y'}{2} \right)^2  \right)^{\frac{k-3}{2}}} &\le \frac{(yy')^{\frac{k-3}{2}}}{\left( \frac{y+y'}{2}\right)^{k-3}} = \left(\frac{\displaystyle |cz+d| + \frac{1}{|cz+d|}}{2} \right)^{3-k} \\
&\le \left( \frac{y}{2}\right)^{3-k}.
\end{aligned}$$

We make the choice $N = \frac{y}{k^{\frac{1}{2}-\eta}}$ for some $\frac{1}{2}>\eta>0$, where we regard $\eta$ as a fixed constant. We thus have:
\begin{equation}
\sup_{\gamma \in \SL2(\BZ) \backslash \{\pm U^n|\ |n|\le 2N\}} \frac{(yy')^{\frac{k-3}{2}}}{\left( \left( \frac{x-x'}{2} \right)^2 + \left( \frac{y+y'}{2} \right)^2  \right)^{\frac{k-3}{2}}} \ll e^{-\delta k^{\frac{\eta}{2}}}
\label{eq:supremumfastdecay}
\end{equation}
for some $\delta>0$. We summarize these estimations in the following proposition.
\begin{prop} \label{prop:method2} Let $\nu$ be an automorphy factor of weight $k\ge 6$ for a finite index subgroup $\Gamma$ of $\SL2(\BZ)$, $\tau \in \SL2(\BZ)$ and $\{f_j\}$ an orthonormal basis of $S(\Gamma,k,\nu)$. Fix $\frac{1}{2}>\eta>0$ then we have for $z \in\BF_I$:
$$
y^k\sum_{j}|(f_j|_k \tau)(z)|^2 \ll_{\eta} \mu(\Gamma)k \left( 1+ \frac{y}{k^{\frac{1}{2}-\eta}} \right).
$$
\end{prop}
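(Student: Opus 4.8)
The plan is to combine Corollary~\ref{cor:Bergmanavg} with the estimates \eqref{eq:Bergmantrivial}--\eqref{eq:supremumfastdecay} already assembled above. By Corollary~\ref{cor:Bergmanavg} we have
$$
y^k\sum_j |(f_j|_k\tau)(z)|^2 = \mu(\Gamma)\frac{k-1}{8\pi}\sum_{\gamma\in\Gamma^\tau}\frac{y^k}{\left(\frac{\gamma z-\overline z}{2i}\right)^k\nu^\tau(\gamma,z)},
$$
and \eqref{eq:method2} bounds the absolute value of this last sum by the corresponding sum over the full modular group $\SL2(\BZ)$, namely $\sum_{\gamma\in\SL2(\BZ)}(yy')^{k/2}\big(((x-x')/2)^2+((y+y')/2)^2\big)^{-k/2}$ with $x'+iy'=\gamma z$. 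So it suffices to bound this last $\SL2(\BZ)$-sum by $O_\eta\!\big(k(1+y/k^{1/2-\eta})\big)$ uniformly for $z\in\BF_I$ and $k\ge 6$, and then multiply through by $\mu(\Gamma)(k-1)/(8\pi)\ll \mu(\Gamma)k$.

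First I would dispose of the small-$y$ regime: for $z\in\BF_I$ one has $y\ge\sqrt3/2$, and for $y$ bounded (say $\sqrt3/2\le y\le 3$) or for $k=3,4,5$ — more precisely whenever $y/k^{1/2-\eta}$ is not the dominant term — the crude bound \eqref{eq:Bergmantrivial} gives that the $\SL2(\BZ)$-sum is $\ll y(1+1/(k-2))\ll y\ll 1+y/k^{1/2-\eta}$ since $k\ge 6$ makes $1/(k-2)$ bounded; this is absorbed into the claimed bound. Then for the main regime $y\ge 3$, $k\ge 6$ I would invoke \eqref{eq:Bergmanreal}: the $\SL2(\BZ)$-sum is
$$
\ll 1 + N\frac{y^k}{(\tfrac14+y^2)^{k/2}} + y\sup_{\gamma\in\SL2(\BZ)\setminus\{\pm U^n:|n|\le 2N\}}\frac{(yy')^{(k-3)/2}}{\big((\tfrac{x-x'}{2})^2+(\tfrac{y+y'}{2})^2\big)^{(k-3)/2}}.
$$
With the choice $N=y/k^{1/2-\eta}$ the middle term is $N(1+1/(4y^2))^{-k/2}\le N\le y/k^{1/2-\eta}$, and by \eqref{eq:supremumfastdecay} the supremum in the last term is $\ll e^{-\delta k^{\eta/2}}$; since $e^{-\delta k^{\eta/2}}\ll_\eta k^{-1}$ for $k\gg_\eta 1$, the third term is $\ll y/k$, which is $\ll y/k^{1/2-\eta}\le k(1+y/k^{1/2-\eta})$. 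Hence the $\SL2(\BZ)$-sum is $\ll_\eta 1 + y/k^{1/2-\eta}\le k(1+y/k^{1/2-\eta})$, and after the $\mu(\Gamma)k$ factor we get exactly the asserted $\mu(\Gamma)k(1+y/k^{1/2-\eta})$.

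The only genuine subtlety is bookkeeping at the boundary between the two regimes and for the finitely many small weights $k\in\{6\}$ and below where constants could blow up — but since $k\ge 6$ is assumed, $1/(k-2)\le 1/4$ is harmless, and the transition $y\approx 3$ is covered by both \eqref{eq:Bergmantrivial} and the requirement $y\ge 3$ in \eqref{eq:Bergmanreal}, so one of the two bounds always applies. A minor point is that \eqref{eq:supremumfastdecay} was derived under $y\ge 3,k\ge 6$ with $N=y/k^{1/2-\eta}$, which matches our choice; one should just note that $e^{-\delta k^{\eta/2}}$ is eventually smaller than any negative power of $k$, so the implied constant in the final bound may absorb the finitely many $k$ for which this fails. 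Thus no new estimate is needed — the proposition is simply the synthesis of the displayed inequalities, and I would present it as such.
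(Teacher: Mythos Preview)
Your argument is correct and is exactly the paper's: invoke Corollary~\ref{cor:Bergmanavg} and \eqref{eq:method2}, use the crude estimate \eqref{eq:Bergmantrivial} for bounded $y$, and for $y\ge 3$, $k\ge 6$ plug $N=y/k^{1/2-\eta}$ into \eqref{eq:Bergmanreal} together with \eqref{eq:supremumfastdecay}. One small slip in your plan paragraph: you say it suffices to bound the $\SL2(\BZ)$-sum by $O_\eta\big(k(1+y/k^{1/2-\eta})\big)$, but since this then gets multiplied by $\mu(\Gamma)(k-1)/(8\pi)$ you in fact need the $\SL2(\BZ)$-sum to be $O_\eta(1+y/k^{1/2-\eta})$ --- which is precisely what your subsequent estimates establish, so just drop the stray factor of $k$ from that sentence.
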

From this proposition we easily deduce Theorem \ref{thm:1}.\\

For the proof of theorems \ref{thm:2} and \ref{thm:3} we will use the Fourier expansion and a nice application of the Cauchy-Schwarz inequality to involve the Fourier coefficients of the Poincar\'e series:
$$\begin{aligned}
|(f_j|_k\tau)(z)|^2 &= \left| \sum_{m+\kappa_{\tau}>0} \widehat{(f_j|_k\tau)}(m) e^{\frac{2\pi i (m+\kappa_{\tau})z}{n_{\tau}}} \right|^2 \\
& \le \left( \sum_{m+\kappa_{\tau}>0} \widehat{|(f_j|_k\tau)}(m)| e^{-\frac{2\pi (m+\kappa_{\tau})y}{n_{\tau}}} \right)^2 \\
& \le \left( \sum_{m+\kappa_{\tau}>0} \lambda_m^{-1} \widehat{|(f_j|_k\tau)}(m)|^2 e^{-\frac{2\pi (m+\kappa_{\tau})y}{n_{\tau}}} \right) \left( \sum_{m+\kappa_{\tau}>0} \lambda_m e^{-\frac{2\pi (m+\kappa_{\tau})y}{n_{\tau}}} \right),
\end{aligned}$$
where the $\lambda_m$ are positive reals to be chosen later. Summing over $j$ we get:
\begin{equation} \begin{aligned}
y^k\sum_j |(f_j|_k \tau)(z)|^2 \le & \left( \sum_{m+\kappa_{\tau}>0} \lambda_m^{-1} A(m) y^{\frac{k}{2}} e^{-\frac{2\pi (m+\kappa_{\tau})y}{n_{\tau}}} \right) \\
&\times  \left( \sum_{m+\kappa_{\tau}>0} \lambda_m y^{\frac{k}{2}} e^{-\frac{2\pi (m+\kappa_{\tau})y}{n_{\tau}}} \right),
\label{eq:method1}
\end{aligned}\end{equation}
where
\begin{equation}\begin{aligned}
A(m) = & \frac{\mu(\Gamma)(4\pi(m+\kappa_{\tau}))^{k-1}}{n_{\tau}^k\Gamma(k-1)} \\
& \times  \left(1+2\pi i^{-k} \sum_{c=1}^{\infty} \frac{W(\Gamma^{\tau},\nu^{\tau};m,m;c)}{n_{\tau}c} J_{k-1}\left( \frac{4\pi(m+\kappa_{\tau})}{cn_{\tau}} \right) \right).
\end{aligned} \label{eq:funcA} \end{equation}
For the generalized Kloosterman sums we are going to use the trivial estimate:
$$
|W(\Gamma^{\tau},\nu^{\tau};m,m,c)|\le n_{\tau}^2c.
$$
Before we go any further we shall remark here that we can assume $k \gg 1$ as we want to investigate the behavior as $k \to \infty$.\\

We now have to deal with the sum
$$
\sum_{c=1}^{\infty} \left| J_{k-1}\left( \frac{4\pi(m+\kappa_{\tau})}{cn_{\tau}} \right) \right|.
$$

We split this into the different regions:
\begin{enumerate}
	\item $\sqrt{\frac{k-1}{2}} \ge \frac{4\pi(m+\kappa_{\tau})}{cn_{\tau}}$,
	\item $k-1-(k-1)^{\alpha} \ge \frac{4\pi(m+\kappa_{\tau})}{cn_{\tau}} \ge \sqrt{\frac{k-1}{2}}$ ,
	\item $k-1+(k-1)^{\alpha} \ge \frac{4\pi(m+\kappa_{\tau})}{cn_{\tau}} \ge k-1-(k-1)^{\alpha}$,
	\item $\frac{4\pi(m+\kappa_{\tau})}{cn_{\tau}} \ge k-1+(k-1)^{\alpha}$.
\end{enumerate}
Where $1\ge \alpha > \frac{1}{3}$ yet to be chosen.
For the first region we have by means of Proposition \ref{prop:JBesselverysmall}:
\begin{equation}\begin{aligned}
\sum_{c \ge 4 \pi \sqrt{\frac{2}{k-1}}\left( \frac{m+\kappa_{\tau}}{n_{\tau}} \right)} \left| J_{k-1}\left( \frac{4\pi(m+\kappa_{\tau})}{cn_{\tau}} \right) \right| & \le \frac{1}{\Gamma(k)} \sum_{c \ge 4 \pi \sqrt{\frac{2}{k-1}}\left( \frac{m+\kappa_{\tau}}{n_{\tau}} \right)} \left( \frac{2\pi(m+\kappa_{\tau})}{cn_{\tau}} \right)^{k-1} \\
& \le \frac{1}{\Gamma(k)} \left( \frac{2 \pi (m+\kappa_{\tau})}{n_{\tau}} \right)^{k-1} \left( \frac{\sqrt{2} \cdot 4 \pi(m+\kappa_{\tau})}{\sqrt{k-1}n_{\tau}} \right)^{k-1} \\
& \quad \quad \times \left( 1+  \frac{\sqrt{2} \cdot 4 \pi(m+\kappa_{\tau})}{\sqrt{k-1}(k-2)n_{\tau}}\right) \\
& \ll \frac{1}{\Gamma(k)} \left( \frac{k-1}{8}\right)^{\frac{k-1}{2}} \left(1+\frac{m+\kappa_{\tau}}{n_{\tau}}\cdot k^{-\frac{3}{2}} \right) \\
& \ll k^{-\frac{k}{2}} \left(1+\frac{m+\kappa_{\tau}}{n_{\tau}}\cdot k^{-\frac{3}{2}} \right).
\label{eq:region1}
\end{aligned} \end{equation}
In the second region we have by Proposition \ref{prop:JBesselsmall}:
\begin{equation}\begin{aligned}
\sum_{\substack{4 \pi \sqrt{\frac{2}{k-1}}\left( \frac{m+\kappa_{\tau}}{n_{\tau}} \right) \ge c, \\ c \ge 4\pi\left( \frac{m+\kappa_{\tau}}{n_{\tau}} \right)(k-1-(k-1)^{\alpha})^{-1} }} \left| J_{k-1}\left( \frac{4\pi(m+\kappa_{\tau})}{cn_{\tau}} \right) \right| & \ll \left( \frac{m+\kappa_{\tau}}{n_{\tau}} \right)k^{-\frac{1}{2}} \cdot k^{-\frac{4}{3}} \\
& \ll \left( \frac{m+\kappa_{\tau}}{n_{\tau}} \right)k^{-\frac{11}{6}}.
\label{eq:region2}
\end{aligned}\end{equation}

For the third region we have using Proposition \ref{prop:JBesselmed}:

\begin{equation}\begin{aligned}
\sum_{\substack{4\pi\left( \frac{m+\kappa_{\tau}}{n_{\tau}} \right)(k-1-(k-1)^{\alpha})^{-1} \ge c, \\ c \ge  4\pi\left( \frac{m+\kappa_{\tau}}{n_{\tau}} \right)(k-1+(k-1)^{\alpha})^{-1}}} \left| J_{k-1}\left( \frac{4\pi(m+\kappa_{\tau})}{cn_{\tau}} \right) \right| & \ll \left( \frac{m+\kappa_{\tau}}{n_{\tau}} \right) k^{\alpha-2} \cdot k^{-\frac{1}{3}} \\
& \ll \left( \frac{m+\kappa_{\tau}}{n_{\tau}} \right) k^{\alpha-\frac{7}{3}}.
\label{eq:region3}
\end{aligned}\end{equation}

And in the last region we have by Proposition \ref{prop:JBessellarge}:

\begin{equation}\begin{aligned}
\sum_{\substack{4\pi\left( \frac{m+\kappa_{\tau}}{n_{\tau}} \right)(k-1+(k-1)^{\alpha})^{-1} \ge c}} \left| J_{k-1}\left( \frac{4\pi(m+\kappa_{\tau})}{cn_{\tau}} \right) \right| & \ll \left( \frac{m+\kappa_{\tau}}{n_{\tau}} \right) k^{-1} \cdot k^{-\frac{\alpha+1}{4}} \\
& \ll \left( \frac{m+\kappa_{\tau}}{n_{\tau}} \right) k^{-\frac{\alpha+5}{4}}.
\label{eq:region4}
\end{aligned}\end{equation}

We make the choice $\alpha= \frac{13}{15}$ and get for $A(m)$ (defined by equation \eqref{eq:funcA}) the estimation:
\begin{equation}
|A(m)| \ll \frac{\mu(\Gamma)(4\pi)^k}{n_{\tau}^k\Gamma(k-1)} \left((m+\kappa_{\tau})^{k-1}(1+n_{\tau}k^{-\frac{k}{2}})+(m+\kappa_{\tau})^{k}k^{-\frac{22}{15}} \right).
\label{eq:Aest}
\end{equation}

Considering the inequality \eqref{eq:method1} and the Cauchy-Schwartz equality case we should choose $\lambda_m \approx (m+\kappa_{\tau})^{\frac{k}{2}}$. So lets put $\lambda_m=(m+\kappa_{\tau})^{\frac{k}{2}+\delta}$ with $\delta=o(k)$.\\
The sum
\begin{equation}
S(\alpha,\beta,\eta)=\sum_{m+\eta>0} (m+\eta)^{\alpha}e^{-\beta(m+\eta)}, \quad \alpha,\beta,\eta>0
\label{eq:Sab}
\end{equation}
appears often in the next few calculations, hence the following lemma will be useful.

\begin{lem} \label{lem:Sabest} $S(\alpha,\beta,\kappa)$ as defined by \eqref{eq:Sab} satisfies the following inequalities:
$$
S(\alpha,\beta,\eta) \le \beta^{-\alpha-1}\Gamma(\alpha+1)+\beta^{-\alpha} \alpha^{\alpha} e^{-\alpha}
$$
and for $\alpha \le \beta \eta$ we have:
$$
S(\alpha,\beta,\eta) \le \beta^{-\alpha-1}\Gamma(\alpha+1) + \eta^{\alpha} e^{-\beta\eta}.
$$
\end{lem}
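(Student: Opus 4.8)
The plan is to bound the sum $S(\alpha,\beta,\eta)=\sum_{m+\eta>0}(m+\eta)^{\alpha}e^{-\beta(m+\eta)}$ by comparing it to the integral $\int_0^\infty t^\alpha e^{-\beta t}\,dt=\beta^{-\alpha-1}\Gamma(\alpha+1)$. First I would observe that the indices $m$ run over a coset of $\BZ$, so the values $t_m:=m+\eta$ are spaced by $1$ and all lie in $(0,\infty)$; thus the sum is a Riemann-type sum for the function $g(t)=t^\alpha e^{-\beta t}$ with unit mesh. The standard trick is to dominate each term $g(t_m)$ by $\int_{t_m-1}^{t_m} g(t)\,dt$ on the range where $g$ is increasing, and by $\int_{t_m}^{t_m+1} g(t)\,dt$ on the range where $g$ is decreasing, using monotonicity of $g$ on each side of its unique maximum.

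The function $g(t)=t^\alpha e^{-\beta t}$ has a single critical point at $t_\ast=\alpha/\beta$, is increasing on $(0,t_\ast]$ and decreasing on $[t_\ast,\infty)$, with maximum value $g(t_\ast)=(\alpha/\beta)^\alpha e^{-\alpha}=\beta^{-\alpha}\alpha^\alpha e^{-\alpha}$. For the first inequality I would split the sum at $t_\ast$: the (at most one) term with $t_m$ nearest to $t_\ast$ — more precisely, bounding the single largest term — contributes at most $g(t_\ast)=\beta^{-\alpha}\alpha^\alpha e^{-\alpha}$, while for all other terms $t_m$ I use $g(t_m)\le\int_{I_m} g(t)\,dt$ where $I_m$ is the adjacent unit interval on the side of $t_\ast$ containing $t_m$ (so $I_m=[t_m-1,t_m]$ if $t_m<t_\ast$ and $I_m=[t_m,t_m+1]$ if $t_m>t_\ast$); these intervals are disjoint and contained in $(0,\infty)$, so their total contribution is at most $\int_0^\infty g(t)\,dt=\beta^{-\alpha-1}\Gamma(\alpha+1)$. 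Adding the two bounds gives $S(\alpha,\beta,\eta)\le\beta^{-\alpha-1}\Gamma(\alpha+1)+\beta^{-\alpha}\alpha^\alpha e^{-\alpha}$.

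For the second inequality, the hypothesis $\alpha\le\beta\eta$ says $t_\ast=\alpha/\beta\le\eta=t_0$ (the smallest index, with $m=0$), so $g$ is already decreasing on the entire range of summation $\{t_m:m\ge 0\}=\{\eta,\eta+1,\dots\}$. Hence I can bound the $m=0$ term by $g(\eta)=\eta^\alpha e^{-\beta\eta}$ and, for each $m\ge 1$, use $g(t_m)\le\int_{t_m-1}^{t_m} g(t)\,dt$; since these intervals $[\eta,\eta+1],[\eta+1,\eta+2],\dots$ are disjoint and contained in $(0,\infty)$, summing gives $\sum_{m\ge1} g(t_m)\le\int_\eta^\infty g(t)\,dt\le\int_0^\infty g(t)\,dt=\beta^{-\alpha-1}\Gamma(\alpha+1)$. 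This yields $S(\alpha,\beta,\eta)\le\beta^{-\alpha-1}\Gamma(\alpha+1)+\eta^\alpha e^{-\beta\eta}$, as claimed.

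There is no serious obstacle here; the only point requiring a little care is the bookkeeping of which unit interval to attach to which summand so that the chosen intervals remain pairwise disjoint and stay inside $(0,\infty)$ — this is precisely why the largest term (near $t_\ast$) must be pulled out and estimated separately by the maximum value $g(t_\ast)$ rather than by an integral. Everything else is the elementary monotone comparison of a sum with an integral.
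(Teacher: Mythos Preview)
Your approach matches the paper's: compare the sum to $\int_0^\infty t^\alpha e^{-\beta t}\,dt=\beta^{-\alpha-1}\Gamma(\alpha+1)$ using the unimodality of $g(t)=t^\alpha e^{-\beta t}$, and absorb one exceptional term by the maximum value $g(\alpha/\beta)$ (respectively $g(\eta)$ under the hypothesis $\alpha\le\beta\eta$). Your treatment of the second inequality is correct.

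There is, however, a slip in the first inequality: your interval assignment is reversed. On the increasing side ($t_m<t_\ast$) one must take $I_m=[t_m,t_m+1]$, since $g$ increasing there gives $g(t_m)\le g(t)$ on $I_m$ and hence $g(t_m)\le\int_{I_m}g$; on the decreasing side ($t_m>t_\ast$) one must take $I_m=[t_m-1,t_m]$ for the same reason. As you wrote it, the inequalities $g(t_m)\le\int_{I_m}g$ point the wrong way. With the corrected choice the intervals shift \emph{toward} $t_\ast$; the single unit interval straddling $t_\ast$ then dominates whichever of its two adjacent summands is smaller, and the larger of the two is the term you bound by $g(t_\ast)$. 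This is a bookkeeping slip rather than a conceptual gap, and once fixed your argument is exactly the paper's.
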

\begin{proof} The function $x^{\alpha}e^{-\beta x}$ increases on $(0,\frac{\alpha}{\beta}]$ and decreases on $[\frac{\alpha}{\beta},\infty)$. Hence we get
$$\begin{aligned}
S(\alpha,\beta,\eta) & \le \int_{\eta}^{\infty} x^{\alpha}e^{-\beta x} dx + \left( \frac{\alpha}{\beta} \right)^{\alpha} e^{-\beta\frac{\alpha}{\beta}} \\
& \le \int_{0}^{\infty} x^{\alpha}e^{-\beta x} dx +  \beta^{-\alpha} \alpha^{\alpha} e^{-\alpha}\\
& = \beta^{-\alpha-1}\Gamma(\alpha+1)+\beta^{-\alpha} \alpha^{\alpha} e^{-\alpha}.
\end{aligned}$$

And if one assumes $\alpha \le \beta \eta$, then:
$$\begin{aligned}
S(\alpha,\beta,\eta) & \le \int_{\eta}^{\infty} x^{\alpha}e^{-\beta x} dx + \eta^{\alpha}e^{-\beta \eta} \\
& \le \beta^{-\alpha-1}\Gamma(\alpha+1)+ \eta^{\alpha}e^{-\beta \eta}.
\end{aligned}$$

\end{proof}

Using \eqref{eq:Aest} in \eqref{eq:method1} with the choice $\lambda_m=(m+\kappa_{\tau})^{\frac{k}{2}+\delta}$ we get:
\begin{equation*}\begin{aligned}
y^k\sum_j |(f_j|_k \tau)(z)|^2  & \le \frac{y^k (4\pi)^k \mu(\Gamma)}{n_{\tau}^k\Gamma(k-1)} S\left(\frac{k}{2}+\delta,\frac{2\pi y}{n_{\tau}},\eta_{\tau}\right) \\
& \quad \times \left( (1+n_{\tau}k^{-\frac{k}{2}})S\left(\frac{k}{2}-\delta-1,\frac{2 \pi y}{n_{\tau}},\eta_{\tau}\right)+ k^{-\frac{22}{15}} S \left(\frac{k}{2}-\delta,\frac{2 \pi y}{n_{\tau}}, \eta_{\tau} \right)\right).
\label{eq:method1low}
\end{aligned}\end{equation*}

Using Lemma \ref{lem:Sabest} we have:
$$\begin{aligned}
S\left(\frac{k}{2}+\delta,\frac{2\pi y}{n_{\tau}},\eta_{\tau}\right) & \le \left(\frac{2 \pi y}{n_{\tau}}\right)^{-\frac{k}{2}-\delta-1} \Gamma\left(\frac{k}{2}+\delta+1\right) + \left( \frac{2 \pi y}{n_{\tau}} \right)^{-\frac{k}{2}-\delta} \left(\frac{k}{2}+\delta\right)^{\frac{k}{2}+\delta}e^{-\frac{k}{2}-\delta} \\
& \ll \left(\frac{2 \pi y}{n_{\tau}}\right)^{-\frac{k}{2}-\delta-1} \left( \frac{k}{2} \right)^{\frac{k+1}{2}+\delta} e^{-\frac{k}{2}-\delta} \left[ e^{\delta} + \frac{2 \pi y}{n_{\tau}}k^{-\frac{1}{2}}e^{\delta}\right]\\
& \ll \frac{(4 \pi)^{-\frac{k}{2}-\delta}y^{-\frac{k}{2}-\delta-1}k^{\frac{k+1}{2}+\delta}e^{-\frac{k}{2}}}{n_{\tau}^{-\frac{k}{2}-\delta -1}} \left[ 1+\frac{yk^{-\frac{1}{2}}}{n_{\tau}} \right], \\
S\left(\frac{k}{2}-\delta-1,\frac{2 \pi y}{n_{\tau}},\eta_{\tau}\right) %& \le \left(\frac{2 \pi y}{n_{\tau}}\right)^{-\frac{k}{2}+\delta} \Gamma\left(\frac{k}{2}-\delta\right) + \left( \frac{2 \pi y}{n_{\tau}} \right)^{-\frac{k}{2}+\delta+1} \left(\frac{k}{2}-\delta-1\right)^{\frac{k}{2}-\delta-1}e^{-\frac{k}{2}+\delta+1} \\
%& \ll \left(\frac{2 \pi y}{n_{\tau}}\right)^{-\frac{k}{2}+\delta} \left( \frac{k}{2} \right)^{\frac{k-1}{2}-\delta}e^{-\frac{k}{2}+\delta} \left[e^{-\delta} + \frac{2\pi y}{n_{\tau}}k^{-\frac{1}{2}}e^{-\delta} \right] \\
& \ll \frac{(4 \pi)^{-\frac{k}{2}+\delta}y^{-\frac{k}{2}+\delta}k^{\frac{k-1}{2}-\delta}e^{-\frac{k}{2}}}{n_{\tau}^{-\frac{k}{2}+\delta}} \left[1+\frac{yk^{-\frac{1}{2}}}{n_{\tau}} \right], \\
S \left(\frac{k}{2}-\delta,\frac{2 \pi y}{n_{\tau}}, \eta_{\tau} \right) %& \le \left(\frac{2 \pi y}{n_{\tau}}\right)^{-\frac{k}{2}+\delta-1} \Gamma\left(\frac{k}{2}-\delta+1\right) + \left( \frac{2 \pi y}{n_{\tau}} \right)^{-\frac{k}{2}+\delta} \left(\frac{k}{2}-\delta\right)^{\frac{k}{2}-\delta}e^{-\frac{k}{2}+\delta} \\
%& \ll \left(\frac{2 \pi y}{n_{\tau}}\right)^{-\frac{k}{2}+\delta-1} \left( \frac{k}{2} \right)^{\frac{k+1}{2}-\delta} e^{-\frac{k}{2}+\delta} \left[ e^{-\delta} + \frac{2 \pi y}{n_{\tau}}k^{-\frac{1}{2}}e^{-\delta}\right]\\
& \ll \frac{(4 \pi)^{-\frac{k}{2}+\delta}y^{-\frac{k}{2}+\delta-1}k^{\frac{k+1}{2}-\delta}e^{-\frac{k}{2}}}{n_{\tau}^{-\frac{k}{2}+\delta -1}} \left[ 1+\frac{yk^{-\frac{1}{2}}}{n_{\tau}} \right].
\end{aligned}$$
Plugging these inequalities into \eqref{eq:method1low} we get:
\begin{prop} \label{prop:method1low} Let $\nu$ be an automorphy factor of weight $k \gg 1$ for $\Gamma$ a finite index subgroup of $\SL2(\BZ)$, $\tau \in \SL2(\BZ)$ and $\{f_j\}$ an orthonormal basis of $S(\Gamma,k,\nu)$ then we have for $z \in \BF_I$:
\begin{equation}\begin{aligned}
y^k\sum_j |(f_j|_k \tau)(z)|^2  & \ll \frac{\mu(\Gamma)n_{\tau}k^{\frac{3}{2}}}{y} \left[1+\frac{yk^{-\frac{1}{2}}}{n_{\tau}} \right]^2 \left[ 1+n_{\tau}k^{-\frac{k}{2}}+\frac{n_{\tau}}{y}k^{-\frac{7}{15}} \right].
\label{eq:method1lowfinal}
\end{aligned}\end{equation}
\end{prop}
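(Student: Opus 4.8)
The plan is almost entirely computational: substitute the three bounds for $S\bigl(\tfrac{k}{2}+\delta,\tfrac{2\pi y}{n_\tau},\eta_\tau\bigr)$, $S\bigl(\tfrac{k}{2}-\delta-1,\tfrac{2\pi y}{n_\tau},\eta_\tau\bigr)$ and $S\bigl(\tfrac{k}{2}-\delta,\tfrac{2\pi y}{n_\tau},\eta_\tau\bigr)$ just derived from Lemma \ref{lem:Sabest} into the product bound obtained above by inserting \eqref{eq:Aest} into \eqref{eq:method1}, multiply everything out, and collect terms. No analytic input is needed beyond Stirling's formula; what remains is bookkeeping of the exponents of $4\pi$, $y$, $k$, $n_\tau$ and $e$, together with the common bracket factor $\bigl[1+yk^{-1/2}/n_\tau\bigr]$ carried by each of the three $S$-bounds.

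First I would form the product of the first $S$-bound with the second. Each carries one copy of $\bigl[1+yk^{-1/2}/n_\tau\bigr]$, so the product carries its square, and in the remaining factors all dependence on $\delta$ cancels, leaving a quantity $\asymp (4\pi)^{-k}\,y^{-k-1}\,n_\tau^{k+1}\,k^{k}\,e^{-k}\,\bigl[1+yk^{-1/2}/n_\tau\bigr]^2$. Likewise, the product of the first $S$-bound with the third is $\asymp (4\pi)^{-k}\,y^{-k-2}\,n_\tau^{k+2}\,k^{k+1}\,e^{-k}\,\bigl[1+yk^{-1/2}/n_\tau\bigr]^2$. Multiplying these by the prefactor $\tfrac{y^k(4\pi)^k\mu(\Gamma)}{n_\tau^k\Gamma(k-1)}$ collapses the powers of $4\pi$, $y$ and $n_\tau$, leaving $\tfrac{\mu(\Gamma)\,n_\tau\,k^{k}e^{-k}}{y\,\Gamma(k-1)}\bigl[\cdots\bigr]^2$ and $\tfrac{\mu(\Gamma)\,n_\tau^2\,k^{k+1}e^{-k}}{y^2\,\Gamma(k-1)}\bigl[\cdots\bigr]^2$ respectively.

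The only non-routine simplification is to invoke Stirling's formula in the form $\Gamma(k-1)\asymp k^{k-3/2}e^{-k}$, which (using $(1+O(1/k))^k\asymp 1$) gives $\tfrac{k^{k}e^{-k}}{\Gamma(k-1)}\asymp k^{3/2}$ and hence $\tfrac{k^{k+1}e^{-k}}{\Gamma(k-1)}\asymp k^{5/2}$. The first contribution then becomes $\tfrac{\mu(\Gamma)\,n_\tau\,k^{3/2}}{y}\bigl[1+yk^{-1/2}/n_\tau\bigr]^2$, which acquires the extra factor $\bigl(1+n_\tau k^{-k/2}\bigr)$ inherited from \eqref{eq:Aest}; the second becomes $\tfrac{\mu(\Gamma)\,n_\tau^2\,k^{5/2}}{y^2}\bigl[\cdots\bigr]^2$ multiplied by the factor $k^{-22/15}$ from \eqref{eq:Aest}, i.e.\ $\tfrac{\mu(\Gamma)\,n_\tau\,k^{3/2}}{y}\cdot\tfrac{n_\tau}{y}\,k^{1-22/15}\bigl[\cdots\bigr]^2=\tfrac{\mu(\Gamma)\,n_\tau\,k^{3/2}}{y}\cdot\tfrac{n_\tau}{y}\,k^{-7/15}\bigl[\cdots\bigr]^2$. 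Factoring out the common $\tfrac{\mu(\Gamma)\,n_\tau\,k^{3/2}}{y}\bigl[1+yk^{-1/2}/n_\tau\bigr]^2$ and gathering the three remaining pieces $1$, $n_\tau k^{-k/2}$, $\tfrac{n_\tau}{y}k^{-7/15}$ produces exactly \eqref{eq:method1lowfinal}.

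The main obstacle is simply keeping the bookkeeping straight --- there is no new idea left once the $S$-bounds of Lemma \ref{lem:Sabest} and the Kloosterman/Bessel estimate \eqref{eq:Aest} are in hand. The one point worth flagging is that here Lemma \ref{lem:Sabest} is used in its first, unconditional form, so no case distinction on whether $\tfrac{k}{2}\pm\delta\le\tfrac{2\pi y}{n_\tau}\eta_\tau$ is needed for this proposition; the sharper second bound of that lemma, valid when the order does not exceed $\beta\eta$, is instead what will be needed for the complementary small-$y$ regime treated afterwards.
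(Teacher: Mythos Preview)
Your proposal is correct and follows the paper's approach exactly: the paper likewise bounds the three $S$-sums via the first (unconditional) inequality of Lemma~\ref{lem:Sabest}, multiplies them against the prefactor $\tfrac{y^k(4\pi)^k\mu(\Gamma)}{n_\tau^k\Gamma(k-1)}$ together with the weights from \eqref{eq:Aest}, and simplifies using Stirling. One small slip in your closing remark: the sharper second bound of Lemma~\ref{lem:Sabest} is used afterwards for the \emph{large}-$y$ regime $y\ge\tfrac{3n_\tau k}{\eta_\tau\pi}$ (Proposition~\ref{prop:method1large}), not small $y$.
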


For large $y$ we can improve on this. For this purpose we assume $|\delta|+1\le \frac{k}{2}$ and $y \ge \frac{3n_{\tau}k}{\eta_{\tau}\pi}$. These assumptions will allow us to use the following lemma.

\begin{lem} \label{lem:expdecay} The following inequality holds for $x \ge 6 \frac{\alpha}{\beta}, \ \alpha,\beta>0$:
$$
x^{\alpha}e^{-\beta x} \le \alpha^{\alpha}\beta^{-\alpha}e^{-\alpha} \cdot e^{-\frac{\beta x}{2}}.
$$
\end{lem}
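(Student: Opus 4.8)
\textbf{Proof proposal for Lemma \ref{lem:expdecay}.}
The plan is to divide both sides of the claimed inequality by $e^{-\beta x/2}$, reducing it to showing that
\[
g(x) := x^{\alpha}e^{-\beta x/2} \le \alpha^{\alpha}\beta^{-\alpha}e^{-\alpha}
\qquad\text{whenever } x \ge 6\alpha/\beta .
\]
First I would determine the shape of $g$ on $(0,\infty)$: one computes $g'(x) = x^{\alpha-1}e^{-\beta x/2}\bigl(\alpha - \tfrac{\beta x}{2}\bigr)$, so $g$ increases on $(0, 2\alpha/\beta]$ and decreases on $[2\alpha/\beta, \infty)$. Since the hypothesis gives $x \ge 6\alpha/\beta > 2\alpha/\beta$, the point $x$ lies in the decreasing range, whence $g(x) \le g(6\alpha/\beta)$; thus it suffices to bound the single value $g(6\alpha/\beta)$.

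Next I would evaluate $g(6\alpha/\beta) = (6\alpha/\beta)^{\alpha}e^{-3\alpha} = 6^{\alpha}\,\alpha^{\alpha}\beta^{-\alpha}e^{-3\alpha}$. The target inequality $g(6\alpha/\beta) \le \alpha^{\alpha}\beta^{-\alpha}e^{-\alpha}$ is equivalent, after cancelling $\alpha^{\alpha}\beta^{-\alpha}$ and taking $\alpha$-th roots, to $6 \le e^{2}$, which holds since $e^{2} = 7.389\ldots$. Multiplying the resulting bound back by $e^{-\beta x/2}$ recovers the statement of the lemma.

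There is no genuine obstacle here: the whole argument is a one-variable monotonicity computation, and its only non-formal ingredient is the elementary numerical fact $6 < e^{2}$. The constant $6$ in the hypothesis is not sharp — any constant exceeding the root $c \approx 5.36$ of $c/2 - \log c = 1$ would serve — but $6$ is a convenient clean choice leaving comfortable room, and it is exactly the slack one needs in the later application, where $g$ will be evaluated well past its peak.
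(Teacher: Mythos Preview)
Your proof is correct and essentially identical to the paper's: the paper writes $x = c\,\alpha/\beta$ and factors the left-hand side as $\alpha^{\alpha}\beta^{-\alpha}e^{-\alpha}\cdot e^{-\beta x/2}\cdot (c e^{1-c/2})^{\alpha}$, then notes that $c e^{1-c/2}$ is decreasing on $[2,\infty)$ and that $6e^{-2}<1$. This is exactly your monotonicity-of-$g$ argument together with the check $6<e^{2}$, merely packaged as a substitution rather than a derivative computation.
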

\begin{proof} Let $x=c \frac{\alpha}{\beta}$, then
$$
x^{\alpha}e^{-\beta x} = \alpha^{\alpha}\beta^{-\alpha}e^{-\alpha} \cdot e^{-\frac{\beta x}{2}} \cdot \left(c e^{1-\frac{c}{2}}\right)^{\alpha}.
$$
Note that $6e^{1-3}<1$ and that $ce^{1-\frac{c}{2}}$ is decreasing on $[2,\infty)$.
\end{proof}

Using Lemmata \ref{lem:Sabest} and \ref{lem:expdecay} we get:

$$\begin{aligned}
S\left(\frac{k}{2}+\delta,\frac{2\pi y}{n_{\tau}},\eta_{\tau}\right) & \le \left(\frac{2 \pi y}{n_{\tau}}\right)^{-\frac{k}{2}-\delta-1} \Gamma\left(\frac{k}{2}+\delta+1\right) + \eta_{\tau}^{\frac{k}{2}+\delta}e^{-\frac{2\pi y}{n_{\tau}}\eta_{\tau}} \\
& \ll \left(\frac{2 \pi y}{n_{\tau}}\right)^{-\frac{k}{2}-\delta-1} \left( \frac{k}{2} \right)^{\frac{k+1}{2}+\delta} e^{-\frac{k}{2}} \\
& \quad \times \left[ 1 + \left(\frac{2 \pi y}{n_{\tau}}\right)^{\frac{k}{2}+\delta+1} \left( \frac{k}{2} \right)^{-\frac{k+1}{2}-\delta}e^{\frac{k}{2}}   \eta_{\tau}^{\frac{k}{2}+\delta}e^{-\frac{2\pi y}{n_{\tau}}\eta_{\tau}}\right] \\
& \ll \left(\frac{2 \pi y}{n_{\tau}}\right)^{-\frac{k}{2}-\delta-1} \left( \frac{k}{2} \right)^{\frac{k+1}{2}+\delta} e^{-\frac{k}{2}} \left[ 1 + \eta_{\tau}^{-1}k^{\frac{1}{2}} e^{-\frac{\eta_{\tau}\pi}{n_{\tau}} y }\right], \\
S\left(\frac{k}{2}-\delta-1,\frac{2 \pi y}{n_{\tau}},\eta_{\tau}\right) & \ll \left(\frac{2 \pi y}{n_{\tau}}\right)^{-\frac{k}{2}+\delta} \left( \frac{k}{2} \right)^{\frac{k+1}{2}-\delta-1} e^{-\frac{k}{2}} \left[ 1 + \eta_{\tau}^{-1}k^{\frac{1}{2}} e^{-\frac{\eta_{\tau}\pi}{n_{\tau}} y }\right], \\
S \left(\frac{k}{2}-\delta,\frac{2 \pi y}{n_{\tau}}, \eta_{\tau} \right) & \ll \left(\frac{2 \pi y}{n_{\tau}}\right)^{-\frac{k}{2}+\delta-1} \left( \frac{k}{2} \right)^{\frac{k+1}{2}-\delta} e^{-\frac{k}{2}} \left[ 1 + \eta_{\tau}^{-1}k^{\frac{1}{2}} e^{-\frac{\eta_{\tau}\pi}{n_{\tau}} y }\right].
\end{aligned}$$

Plugging these inequalities into \eqref{eq:method1low} we get:
\begin{prop} \label{prop:method1large} Let $\nu$ be an automorphy factor of weight $k \gg 1$ for $\Gamma$ a finite index subgroup of $\SL2(\BZ)$, $\tau \in \SL2(\BZ)$ and $\{f_j\}$ an orthonormal basis of $S(\Gamma,k,\nu)$ then we have for $z \in\BF_I, y \ge \frac{3n_{\tau}k}{\eta_{\tau}\pi}$:
\begin{equation}\begin{aligned}
y^k\sum_j |(f_j|_k \tau)(z)|^2  & \ll \frac{\mu(\Gamma)n_{\tau}k^{\frac{3}{2}}}{y} \left[ 1 + \eta_{\tau}^{-1}k^{\frac{1}{2}} e^{-\frac{\eta_{\tau}\pi}{n_{\tau}} y }\right]^2 \left[ 1+n_{\tau}k^{-\frac{k}{2}}+\frac{n_{\tau}}{y}k^{-\frac{7}{15}} \right].
\label{eq:method1largefinal}
\end{aligned}\end{equation}
\end{prop}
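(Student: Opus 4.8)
The plan is to feed the three displayed estimates for $S\bigl(\tfrac{k}{2}+\delta,\tfrac{2\pi y}{n_\tau},\eta_\tau\bigr)$, $S\bigl(\tfrac{k}{2}-\delta-1,\tfrac{2\pi y}{n_\tau},\eta_\tau\bigr)$ and $S\bigl(\tfrac{k}{2}-\delta,\tfrac{2\pi y}{n_\tau},\eta_\tau\bigr)$ into the master bound for $y^k\sum_j|(f_j|_k\tau)(z)|^2$ obtained by combining \eqref{eq:method1} with \eqref{eq:Aest} and the choice $\lambda_m=(m+\kappa_\tau)^{k/2+\delta}$, and then to read off \eqref{eq:method1largefinal} via Stirling's formula and a tally of exponents. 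I would first recall why those three estimates are legitimate under the hypotheses of the proposition: $|\delta|+1\le\tfrac{k}{2}$ forces each exponent $\alpha\in\{\tfrac{k}{2}+\delta,\tfrac{k}{2}-\delta-1,\tfrac{k}{2}-\delta\}$ to lie in $(0,k]$, and $y\ge\tfrac{3n_\tau k}{\eta_\tau\pi}$ gives $\tfrac{2\pi y\eta_\tau}{n_\tau}\ge 6k$; hence $\alpha\le\tfrac{2\pi y\eta_\tau}{n_\tau}$ so the second bound of Lemma \ref{lem:Sabest} applies, and after absorbing one power of $\tfrac{2\pi y}{n_\tau}$ the tail term $\eta_\tau^{\alpha}e^{-\frac{2\pi y}{n_\tau}\eta_\tau}$ becomes $\bigl(\tfrac{2\pi y\eta_\tau}{n_\tau}\bigr)^{\alpha+1}e^{-\frac{2\pi y\eta_\tau}{n_\tau}}$ times a prefactor, to which Lemma \ref{lem:expdecay} applies since $\tfrac{2\pi y\eta_\tau}{n_\tau}\ge 6k\ge 6(\alpha+1)$; together with Stirling on $\Gamma(\alpha+1)$ this produces exactly the bracket $1+\eta_\tau^{-1}k^{1/2}e^{-\eta_\tau\pi y/n_\tau}$ attached to each displayed estimate.

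The remainder is bookkeeping on the outer sum multiplied by each of the two inner terms. The powers of $y$ collapse: $y^{k}\cdot y^{-k/2-\delta-1}\cdot y^{-k/2+\delta}=y^{-1}$ for the $(1+n_\tau k^{-k/2})$-term and $y^{-2}$ for the $k^{-22/15}$-term. The factors $(2\pi)^{-k-1}$ (resp.\ $(2\pi)^{-k-2}$) released by the two $S$'s combine with the $(4\pi)^{k}$ in front and with the $2^{-k}$ (resp.\ $2^{-k-1}$) sitting inside $(\tfrac{k}{2})^{k}$ (resp.\ $(\tfrac{k}{2})^{k+1}$) to leave only an absolute constant. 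The powers of $n_\tau$ assemble to $n_\tau$ and $n_\tau^{2}$ respectively. And the surviving $k$-dependence is $\tfrac{k^{k}e^{-k}}{\Gamma(k-1)}\asymp k^{3/2}$ for the first term and $\tfrac{k^{k+1}e^{-k}}{\Gamma(k-1)}\asymp k^{5/2}$ for the second, by Stirling. The two bracketed factors $\bigl[1+\eta_\tau^{-1}k^{1/2}e^{-\eta_\tau\pi y/n_\tau}\bigr]$ coming from the outer and the relevant inner sum multiply to the square in \eqref{eq:method1largefinal}.

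Collecting, the $(1+n_\tau k^{-k/2})$-term gives $\tfrac{\mu(\Gamma)n_\tau k^{3/2}}{y}\,(1+n_\tau k^{-k/2})\bigl[1+\eta_\tau^{-1}k^{1/2}e^{-\eta_\tau\pi y/n_\tau}\bigr]^{2}$, and the $k^{-22/15}$-term gives $\tfrac{\mu(\Gamma)n_\tau^{2}k^{5/2}}{y^{2}}\,k^{-22/15}\bigl[1+\cdots\bigr]^{2}$; since $\tfrac{5}{2}-\tfrac{22}{15}=\tfrac{31}{30}=\tfrac{3}{2}-\tfrac{7}{15}$, the latter equals $\tfrac{\mu(\Gamma)n_\tau k^{3/2}}{y}\cdot\tfrac{n_\tau}{y}k^{-7/15}\bigl[1+\cdots\bigr]^{2}$, and adding the two contributions is exactly the bound of the proposition.

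I do not expect a real obstacle: given the three $S$-estimates, the proof is substitution followed by Stirling and exponent accounting. The only steps that want a moment's care are the arithmetic identity $\tfrac{5}{2}-\tfrac{22}{15}=\tfrac{3}{2}-\tfrac{7}{15}$, which is what makes the second term fit the advertised shape, and---if one preferred to rederive rather than quote the $S$-estimates---the check that the thresholds in Lemmata \ref{lem:Sabest} and \ref{lem:expdecay} hold, which is precisely the role of the hypotheses $|\delta|+1\le\tfrac{k}{2}$ and $y\ge\tfrac{3n_\tau k}{\eta_\tau\pi}$.
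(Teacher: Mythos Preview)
Your proposal is correct and follows precisely the paper's approach: the paper's proof of this proposition is literally the one-line remark ``Plugging these inequalities into \eqref{eq:method1low} we get'', and what you have written is a faithful expansion of that substitution, including the Stirling reduction $k^{k}e^{-k}/\Gamma(k-1)\asymp k^{3/2}$ and the exponent identity $\tfrac{5}{2}-\tfrac{22}{15}=\tfrac{3}{2}-\tfrac{7}{15}$ that matches the second term to the advertised shape. Your additional sentence justifying the applicability of Lemmata \ref{lem:Sabest} and \ref{lem:expdecay} under the hypotheses $|\delta|+1\le\tfrac{k}{2}$ and $y\ge\tfrac{3n_\tau k}{\eta_\tau\pi}$ is exactly the check the paper leaves implicit just above the three displayed $S$-estimates.
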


For the proof of Theorem \ref{thm:2} we split into different areas. For $y \le \max_{\tau \in \SL2(\BZ)}n_{\tau}$ we use Proposition \ref{prop:method2}, for $y \ge \frac{3 n_{\tau}k}{\eta_{\tau}\pi}$ we use Proposition \ref{prop:method1large} and for $y$ in between we use Proposition \ref{prop:method1low}.\\

For the proof of Theorem \ref{thm:3} we are left to prove a lower bound. We have:
\begin{equation}\begin{aligned}
y^{\frac{k}{2}} \left|\widehat{(f|_k\tau)}(m) \right| &= \frac{1}{n_{\tau}} \left| \int_0^{n_{\tau}} y^{\frac{k}{2}}(f|_k \tau)(z) e^{-\frac{2 \pi i(m+\kappa_{\tau})z}{n_{\tau}}} dx\right| \\
& \le \frac{1}{n_{\tau}} e^{\frac{2 \pi (m + \kappa_{\tau})}{n_{\tau}}y} \cdot  \int_0^{n_{\tau}} y^{\frac{k}{2}} \left| (f|_k \tau)(z) \right|dx.
\label{eq:fourierlow}
\end{aligned}\end{equation}
If we sum the squares of this inequality over an orthonormal basis $\{f_j\}$ we can use the Fourier coefficients of the Pioncar\'e series (see Corollary \ref{cor:Fouriersqr}) for the left hand side and for the right hand side we can use Cauchy-Schwarz to get:
\begin{equation}\begin{aligned}
\sup_{\im z = y} \sum_j y^k|(f_j|_k \tau)(z)|^2 & \ge \frac{1}{n_{\tau}^2} \sum_j \left ( \int_0^{n_{\tau}} dx\right) \left( \int_0^{n_{\tau}} y^k|(f_j|_k \tau)(z)|^2 dx \right ) \\
& \ge \frac{1}{n_{\tau}^2} \sum_j \left( \int_0^{n_{\tau}} y^{\frac{k}{2}} \left| (f|_k \tau)(z) \right|  dx\right)^2\\
& \ge y^k e^{-\frac{4 \pi (m+\kappa_{\tau})}{n_{\tau}}y} \cdot \sum_j \left|\widehat{(f|_k\tau)}(m) \right|^2 \\
& \ge y^k e^{-\frac{4 \pi (m+\kappa_{\tau})}{n_{\tau}}y} \cdot \frac{\mu(\Gamma) (4 \pi (m+\kappa_{\tau}))^{k-1} }{n_{\tau}^k \Gamma(k-1)} \\
& \quad \times \left( 1 +2\pi i^{-k} \sum_{c=1}^{\infty} \frac{W(\Gamma^{\tau},\nu^{\tau};m,m;c)}{n_{\tau}c} J_{k-1}\left( \frac{4\pi(m+\kappa_{\tau})}{cn_{\tau}} \right) \right).
\label{eq:lowavgbound}
\end{aligned}\end{equation}
For $k \ge 320(m+1)^2$ we can use Proposition \ref{prop:JBesselverysmall} to estimate that the right hand side is bigger or equal to
$$
y^k e^{-\frac{4 \pi (m+\kappa_{\tau})}{n_{\tau}}y} \cdot \frac{\mu(\Gamma) (4 \pi (m+\kappa_{\tau}))^{k-1} }{n_{\tau}^k \Gamma(k-1)} \left( 1 - 2\pi n_{\tau} \frac{\zeta(k-1)}{\Gamma(k)} \left( \frac{2 \pi (m+\kappa_{\tau})}{n_{\tau}} \right)^{k-1} \right).
$$
We get our desired lower bound if we takes $y=\frac{k n_{\tau}}{4 \pi (m+ \kappa_{\tau})}$ and $m,\tau$ such that $m+\kappa_{\tau}$ is minimal, which is $\min_{\tau \in \SL2(\BZ)} \eta_{\tau}$.
\begin{acknowledgement}This work is based on the author's master's thesis, which he completed during November 2013 - April 2014 in Bristol, UK. I would like to thank my two supervisors professor Emmanuel Kowalski for enabling me to do my master's thesis abroad in Bristol, and Dr. Abhishek Saha for introducing me to the sup-norm problem of modular forms as well as giving helpful comments and guidance.
\end{acknowledgement}
\newpage
%\input{Theorem2}
%\newpage

\bibliography{Bibliography}

\begin{thebibliography}{EMOT81}

\bibitem[DS13]{DasSeng}
Soumya Das and Jyoti Sengupta.
\newblock {$L^\infty$} norms of holomorphic modular forms in the case of
  compact quotient.
\newblock {\em {P}reprint}, 2013.
\newblock {\tt arXiv:1301.3677}.

\bibitem[EMOT81]{HTF}
Arthur Erd{\'e}lyi, Wilhelm Magnus, Fritz Oberhettinger, and Francesco~G.
  Tricomi.
\newblock {\em Higher transcendental functions. {V}ol. {II}}.
\newblock Robert E. Krieger Publishing Co., Inc., Melbourne, Fla., 1981.
\newblock Based on notes left by Harry Bateman, Reprint of the 1953 original.

\bibitem[FJK13]{FJK}
Joshua~S Friedman, Jay Jorgenson, and Jurg Kramer.
\newblock Uniform sup-norm bounds on average for cusp forms of higher weights.
\newblock {\em {P}reprint}, 2013.
\newblock {\tt arXiv:1305.1348}.

\bibitem[HT12]{HT2}
Gergely Harcos and Nicolas Templier.
\newblock On the sup-norm of {M}aass cusp forms of large level: {II}.
\newblock {\em Int. Math. Res. Not. IMRN}, (20):4764--4774, 2012.

\bibitem[HT13]{HT3}
Gergely Harcos and Nicolas Templier.
\newblock On the sup-norm of {M}aass cusp forms of large level. {III}.
\newblock {\em Math. Ann.}, 356(1):209--216, 2013.

\bibitem[IS95]{IS95}
H.~Iwaniec and P.~Sarnak.
\newblock {$L^\infty$} norms of eigenfunctions of arithmetic surfaces.
\newblock {\em Ann. of Math. (2)}, 141(2):301--320, 1995.

\bibitem[Kir13]{halflevel}
Eren~Mehmet Kiral.
\newblock Bounds on sup-norms of half-integral weight modular forms.
\newblock {\em {P}reprint}, 2013.
\newblock {\tt arXiv:1309.7218}.

\bibitem[Ran77]{MFaF}
Robert~A. Rankin.
\newblock {\em Modular forms and functions}.
\newblock Cambridge University Press, Cambridge-New York-Melbourne, 1977.

\bibitem[Rud05]{R05}
Ze{\'e}v Rudnick.
\newblock On the asymptotic distribution of zeros of modular forms.
\newblock {\em Int. Math. Res. Not.}, (34):2059--2074, 2005.

\bibitem[Sah14]{Saha14}
Abhishek Saha.
\newblock On sup-norms of cusp forms of powerful level.
\newblock {\em {P}reprint}, 2014.
\newblock {\tt arXiv:1404.3179}.

\bibitem[Tem11]{Thybrid}
NICOLAS Templier.
\newblock Hybrid sup-norm bounds for hecke-maass cusp forms.
\newblock {\em To appear J. Eur. Math. Soc}, 2011.

\bibitem[Wat44]{ToBF}
G.~N. Watson.
\newblock {\em A {T}reatise on the {T}heory of {B}essel {F}unctions}.
\newblock Cambridge University Press, Cambridge, England; The Macmillan
  Company, New York, 1944.

\bibitem[Xia07]{Supnormintweight}
Honggang Xia.
\newblock On {$L^\infty$} norms of holomorphic cusp forms.
\newblock {\em J. Number Theory}, 124(2):325--327, 2007.

\end{thebibliography}
\end{document}